\numberwithin{equation}{section}
\newcommand{\R}{\ensuremath{\mathbb{R}}}
\def\p{\partial}
\def\o{\overline}
\def\b{\bar}
\def\mb{\mathbb}
\def\mc{\mathcal}
\def\mr{\mathrm}
\def\n{\nabla}
\def\cal{\mc}
\def\ra{\rightarrow}
\newtheorem{thm}{Theorem}[section]
\newtheorem{lemma}[thm]{Lemma}
\newtheorem{prop}[thm]{Proposition}
\newtheorem{cor}[thm]{Corollary}
\theoremstyle{definition}
\newtheorem{rem}[thm]{Remark}
\theoremstyle{definition}
\newtheorem{defn}[thm]{Definition}
\theoremstyle{plain}
\newcommand{\comment}[1]{}
\newenvironment{aligns}{\equation\aligned}{\endaligned\endequation}
\begin{document}

\title{Curvature of the total space of a Griffiths negative vector bundle and quasi-Fuchsian space}

\author{InKang Kim}
\author{Xueyuan Wan}
\author{Genkai Zhang}

\address{Inkang Kim: School of Mathematics, KIAS, Heogiro 85, Dongdaemun-gu Seoul, 02455, Republic of Korea}
\email{inkang@kias.re.kr}

\address{Xueyuan Wan: Mathematical Science Research Center, Chongqing University of Technology, Chongqing 400054, China}
\email{xwan@cqut.edu.cn}

\address{Genkai Zhang: Mathematical Sciences, Chalmers University of Technology and Mathematical Sciences, G\"oteborg University, SE-41296 G\"oteborg, Sweden}
\email{genkai@chalmers.se}

\begin{abstract}
  For a holomorphic
  vector bundle $E$ over
  a Hermitian  manifold $M$ there
  are two important notions 
  of curvature  positivity, the Griffiths positivity
  and Nakano positivity. We study the consequence
  of these positivities and the relevant estimates.
  If $E$ is Griffiths negative over  K\"ahler manifold,
  then there is a K\"ahler metric on its total space $E$, and
 we calculate the curvature and prove the non-positivity of the
 curvature along the tautological direction. The Nakano positivity
 can be formulated as a positivity for the Nakano curvature
 operator and we give estimate the Nakano curvature operator associated with a Nakano positive direct image bundle. As applications we construct a  mapping class group invariant K\"ahler metric on the quasi-Fuchsian space $\mr{QF}(S)$, which extends the Weil-Petersson metric on
 the Teichm\"uller space $\mathcal T(S)\subset \mr{QF}(S)$,
 and  we obtain  estimates for the Nakano curvature
  operator for the dual Weil-Petersson metric on the holomorphic cotangent bundle of Teichm\"uller space. 

 \end{abstract}

 \subjclass[2020]{32G15, 32Q15, 34L15}  
 \keywords{Griffiths negativity, Teichm\"uller space, quasi-Fuchsian space, complex projective structure, Nakano curvature operator, Weil-Petersson metric}
 \thanks{Research by Inkang Kim was partially supported by Grant
   NRF-2019R1A2C1083865 and KIAS Individual Grant (MG031408),
    Xueyuan Wan by  NSFC (No. 12101093), Scientific Research
    Foundation of Chongqing University of Technology,
    and Genkai Zhang by  Swedish Research Council (VR), 2018-03402.
    Part of this work was done when  G. Zhang was
    visiting KIAS in August 2022 as a KIAS scholar and the support
    from KIAS is greatly acknowledged.}

\maketitle
\tableofcontents

\section*{Introduction}

Let $E$ be a holomorphic Hermitian vector bundle
over a K\"ah{}ler manifold  $M$.
There are two important notions 
  of curvature  positivity, the Griffiths positivity
  and Nakano positivity. The aim of the present
  paper is to study the consequences
  of these positivities and apply the results
  to various cases related to the
  Teichm\"u{}ller space.

  The curvature on a holomorphic bundle  $E$ over
  $M$ can be viewed as an operator on $  T^{(1, 0)}_mM \otimes E_m $,
  $m\in M$,  and it is self-adjoint.
  It is called  Griffiths positive
  if $R$
  is positive on simple tensors $u\otimes v\in
  T^{(1, 0)}_m\otimes E_m$,   and 
  Nakano positive if it is positive on the total tensor space
  $T^{(1, 0)}_m\otimes E_m$, $m\in M$.
  We prove first that if $E$ is Griffith
  negative   then there is a K\"ahler metric on its total space $E$, and
 we calculate the curvature and prove the non-positivity of the
 curvature along the fiber direction. We
 then give estimates of the  Nakano curvature operator
 associated with a Nakano positive direct image bundle.

Let $p:\mc{X}\to M$ be a holomorphic fibration with compact fibers. Suppose the relative canonical line bundle $K_{\mc{X}/M}$ is  positive over $\mc{X}$, and consider the following direct image bundle 
\begin{align*}
\begin{split}
  E=p_*(K_{\mc{X}/M}^{\otimes 2}). 
 \end{split}
\end{align*}
Following Berndtsson \cite{Bo, Bo1}, there exists a natural $L^2$-metric (a Hermitian metric) on the holomorphic vector bundle $E$, and the curvature of the $L^2$-metric is Nakano positive. It is natural to consider the extension of Nakano's positivity. This suggests we estimate the Nakano curvature operator $Q$, which is defined as the quadratic form on $TM\otimes E$
 $$Q(A,B)=\sum_{\alpha,j}a^{\alpha i}\o{b^{\beta j}}\left\langle R(\frac{\p}{\p z^\alpha},\frac{\p}{\p \b{z}^\beta})e_i,e_j\right\rangle$$
  for any $A=a^{\alpha i} \frac{\p}{\p z^\alpha}\otimes e_i$ and $B=b^{\beta j}\frac{\p}{\p z^\beta}\otimes e_j$ in $TM\otimes E$.  We identify it  with an operator on $TM\otimes E$
   denoted also by $Q$, 
  $$
\langle Q(A), B\rangle_{TM\otimes E}=Q(A,B). 
  $$
  Let $\iota: T_tM\otimes E_t \to A^{n-1, 1}(X_t)$, $n=\dim X_t$, 
be the diagonal map 
\begin{equation}\label{iota}
\iota: 
T_tM\otimes E_t \to 
A^{0,1}(X_t,TX_t)\otimes H^{0}(X_t,L_t\otimes  K_t) 
\to A^{n-1, 1}(X_t, L_t), 
\end{equation}
where the first map is given by the  Kodaira-Spencer tensor 
and the second one is the evaluation, see \eqref{KS tensor}. 

We obtain the following estimates on the Nakano curvature operator. 
\begin{thm}\label{thm0.2}
For any fixed $t\in M$,  let 
 $\sigma$ be the maximum of the eigenvalue 
 of $\Box'=\n'\n'^*+\n'^*\n$ on the finite-dimensional subspace 
 $\iota( T_tM\otimes E_t )$. 
 Then we have 
  $$
Q(A, A)\ge   (\frac 1{1+2n}  + (1+\sigma)^{-1}) 
  \Vert \iota(A)\Vert^2. 
  $$
  In particular, if the map $\iota$ is injective, then the Nakano curvature operator 
$Q$ satisfies 
  $$
Q\ge   (\frac 1{1+2n}  + (1+\sigma)^{-1})  \lambda_{min} 
$$
where 
$\lambda_{min}$  is the lowest eigenvalue of $\iota^\ast \iota$
with respect to the Hilbert space norms 
in $T_tM\otimes E_t$ and $A^{n-1, 1}(X_t,L_t)$. 
 \end{thm}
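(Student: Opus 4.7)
The natural approach is to invoke Berndtsson's curvature formula for the $L^2$-metric on the direct image bundle $E = p_*(K_{\mathcal{X}/M}^{\otimes 2})$, viewed as $p_*(K_{\mathcal{X}/M}\otimes L')$ with twist $L' = K_{\mathcal{X}/M}$. This formula decomposes the Nakano curvature into two manifestly non-negative contributions: a \emph{horizontal} piece built from the curvature form $c(K_{\mathcal{X}/M})$ on the total space, and a \emph{Kodaira--Spencer} piece expressed via the resolvent $(1+\Box')^{-1}$ acting on the image of $\iota$. The two summands in the theorem's estimate are handled separately, one by each piece.

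\textbf{Step 1 (Berndtsson decomposition).} Writing $A = a^{\alpha i}\,\partial/\partial z^\alpha\otimes e_i$, one expands
\[
Q(A,A) \;=\; T_{\mathrm{hor}}(A)\;+\;T_{\mathrm{KS}}(A),
\]
where $T_{\mathrm{hor}}(A)$ is a fiber integral pairing $c(K_{\mathcal{X}/M})$ with horizontal lifts of $\partial/\partial z^\alpha$ against the $e_i$, and
\[
T_{\mathrm{KS}}(A) \;=\;\bigl\langle (1+\Box')^{-1}\iota(A),\,\iota(A)\bigr\rangle_{A^{n-1,1}(X_t,L_t)}.
\]
The identification of the second term with $\iota$ is exactly the content of the Kodaira--Spencer contraction defining $\iota$ in (\ref{iota}), since horizontal lifts $V_\alpha$ satisfy $\bar\partial V_\alpha|_{X_t}=\kappa(\partial/\partial z^\alpha)$, and contraction with $e_i\in H^0(X_t,L_t\otimes K_t)$ produces an element of $A^{n-1,1}(X_t,L_t)$.

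\textbf{Step 2 (spectral estimate for the Kodaira--Spencer term).} Since $\iota(A)$ lies by construction in the finite-dimensional subspace $\iota(T_tM\otimes E_t)$, on which the largest eigenvalue of $\Box'$ is $\sigma$, spectral calculus applied to the self-adjoint operator $(1+\Box')^{-1}$ yields
\[
T_{\mathrm{KS}}(A)\;\ge\;\frac{1}{1+\sigma}\,\|\iota(A)\|^2.
\]

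\textbf{Step 3 (pointwise estimate for the horizontal term) and conclusion.} The crux is to show
\[
T_{\mathrm{hor}}(A)\;\ge\;\frac{1}{1+2n}\,\|\iota(A)\|^2.
\]
This requires a pointwise inequality on $X_t$ relating $c(K_{\mathcal{X}/M})$ evaluated on horizontal lifts to the pointwise norm of $\iota(A)(x)$. In a local frame adapted to the horizontal--vertical splitting induced by $c(K_{\mathcal{X}/M})$, one uses a Cauchy--Schwarz inequality for the fiber of real dimension $2n$, together with the Kodaira--Nakano identity to convert between $\partial$- and $\bar\partial$-Laplacians acting on $L_t$-valued forms, to extract the factor $1/(1+2n)$. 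Summing the two bounds gives the first inequality of the theorem. For the operator form, injectivity of $\iota$ gives $\lambda_{\min}>0$ and $\|\iota(A)\|^2=\langle\iota^{\ast}\iota A,A\rangle\ge\lambda_{\min}\|A\|^2$, yielding the final statement. The principal obstacle is the clean derivation of the constant $1/(1+2n)$ in Step~3: Step~1 is essentially bookkeeping once Berndtsson's formula is in hand and Step~2 is automatic, so the theorem's novelty resides in the pointwise inequality, whose form is dictated by the tight interplay between the positive curvature of $K_{\mathcal{X}/M}$ on the total space and the primitive part of $\iota(A)$ on the fiber.
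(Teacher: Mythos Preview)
Your Steps~1 and~2 are essentially correct and match the paper's use of Berndtsson's curvature formula and the obvious spectral bound on $(1+\Box')^{-1}$. The gap is in Step~3: your proposed mechanism (``Cauchy--Schwarz for the fiber of real dimension $2n$'' plus ``Kodaira--Nakano identity'' applied to ``the primitive part of $\iota(A)$'') does not lead to the constant $\frac{1}{1+2n}$, and in fact there is no direct pointwise inequality between $c(\phi)_{\alpha\bar\beta}$ and $|\iota(A)(x)|^2$ of the kind you describe.

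The paper's route is quite different and hinges on three ingredients you omit. First, because $L=K_{\mathcal X/M}$ and one chooses $e^{\phi}=\det\phi$, Schumacher's identity gives
\[
c(\phi)_{\alpha\bar\beta}=(1+\Box)^{-1}(\mu_\alpha,\mu_\beta),
\]
so that $T_{\mathrm{hor}}(A)=\sum a^{\alpha i}\overline{a^{\beta j}}\int_{X_t}(1+\Box)^{-1}(\mu_\alpha,\mu_\beta)\,(e_i,e_j)\,\omega_t^n/n!$. Second, by self-adjointness of $1+\Box$ one transfers the resolvent from the factor $(\mu_\alpha,\mu_\beta)$ to the factor $(e_i,e_j)$. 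Third, a maximum-principle lemma shows that for any holomorphic section $u$ of a positive line bundle of scalar curvature $k$ one has the \emph{pointwise} bound $(1+\Box)^{-1}\|u\|^2_x\ge\frac{1}{1+k}\|u\|^2_x$; since here $e_i\in H^0(X_t,K_{X_t}^{\otimes 2})$ and the scalar curvature of $K_{X_t}^{\otimes 2}$ with respect to the induced K\"ahler metric equals $2n$, the matrix $M_{ij}=(1+\Box)^{-1}(e_i,e_j)-\frac{1}{1+2n}(e_i,e_j)$ is pointwise positive semidefinite. Pairing this against the positive semidefinite matrix $a^{\alpha i}\overline{a^{\beta j}}(\mu_\alpha,\mu_\beta)$ and using $(\mu_\alpha,\mu_\beta)(e_i,e_j)=(i_{\mu_\alpha}e_i,i_{\mu_\beta}e_j)$ yields $T_{\mathrm{hor}}(A)\ge\frac{1}{1+2n}\|\iota(A)\|^2$. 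None of these steps is a Cauchy--Schwarz or Kodaira--Nakano manipulation; the specific constant $\frac{1}{1+2n}$ comes from the scalar curvature of $K_{X_t}^{\otimes 2}$ via the maximum principle, and Schumacher's identity is what makes the horizontal term comparable to $\|\iota(A)\|^2$ at all.
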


 We consider then the more concrete  case
 related to these notions.

 One of the most studied cases
 of the above notions
 is the case when $M$ is the
 Teichm\"uller space $\cal T(S)$
 equipped with the Weil-Petterson metric of a surface $S$
  and $E$ the tangent bundle
 or relevant bundles. There have been active studies on the properties of this metric since its birth.
More recently, some 
new K\"ahler metrics with more desirable properties
such as K\"ahler hyperbolicity have been found where the
K\"ahler hyperbolicity means that the K\"ahler metric is complete with  bounded curvatures
and it has a bounded  K\"ahler primitive.
Such  K\"ahler hyperbolic metrics are studied by
McMullen \cite{Mc} and Liu-Sun-Yau \cite{Yau}.

In Kleinian group theory, the quasi-Fuchsian space $\mr{QF}(S)$ is a
quasi-conformal deformation space of the Fuchsian space $\mr{F}(S)$
which can be identified with $\cal T(S)$. By Bers' simultaneous
uniformization theorem, $\mr{QF}(S)$ can be naturally identified with
$\cal T(S)\times \cal T(\bar S)$ where $\bar S$ is $S$ with reversed
orientation. With this identification, the mapping class group acts diagonally on $\mr{QF}(S)$ and $\mr{F}(S)=\cal T(S)$ sits diagonally on $\cal T(S)\times \cal T(\bar S)$. But this diagonal embedding is totally real. Hence if one gives a product K\"ahler metric on $\mr{QF}(S)$, this metric is not an extension of a K\"ahler metric on $\mr{F}(S)$.
There have been several attempts to extend a K\"ahler metric of $\cal T(S)$ to $\mr{QF}(S)$.
Bridgeman and Taylor \cite{BT} described a quasi-metric which extends the K\"ahler metric of $\cal T(S)$, but
it vanishes along the pure bending deformation vectors \cite{Br}.

As an application of our result we give a completely new mapping class group invariant K\"ahler metric on $\mr{QF}(S)$ which extends any K\"ahler metric on $\cal T(S)$. Indeed such a metric is already defined in the paper \cite{KZ} a few years ago.
The metric is defined by a K\"ahler potential, which is a combination of $L^2$-norm of a fiber and a K\"ahler potential on the base $\cal T(S)$. We will see that $\mr{QF}(S)$ can be embedded, via Bers embedding using the complex projective structures, in the holomorphic  cotangent bundle $\cal Q(S)$ over $\cal T(S)$
with fibers being quadratic holomorphic differentials, as a bounded open neighborhood of the zero section.

 On the holomorphic vector bundle
$\cal Q(S)$, one can give
a mapping class group invariant K\"ahler metric as
follows. By a theorem of Berndtsson \cite{Bo}, one can show that $\cal
Q(S)$ is Griffiths positive with respect to the $L^2$-metric; see \cite{KZ} for a proof.
Hence its dual bundle $\cal B(S)=\cal Q^*(S)$, which is a tangent
bundle of Teichm\"uller space whose fiber is the set of Beltrami
differentials, is Griffiths negative. We fix in the rest of the paper
this realization of $\mr{QF}(S)$ as
a subset in $\cal B(S)$.
We recall that the $L^2$-norm of a Beltrami differential $w=\mu(v) \frac{ d\bar v}{d v}$ is given by
$$||w||^2=(w,w)=\int_Y |\mu(v)|^2 \rho(v)^2|dv|^2$$ where $v$ is a local holomorphic coordinate on $Y$ and $\rho(v)|dv|$ is a hyperbolic metric on $Y$.  Here $(,)$ denotes the $L^2$ inner product over each fiber and $||\cdot||$ denotes its associated norm.

  The K\"ahler metric depending on a constant $k>0$ and a K\"ahler metric on $\cal T(S)$, is constructed on $\cal B(S)$ via K\"ahler potential
$$\Phi(w)=||w||^2 + k \pi^* \psi(w),$$ where $w$ is an element in the fiber, $\psi$ is a K\"ahler potential on $\cal T(S)$ and $\pi:\cal B(S)\ra \cal T(S)$ is a projection.

In  local holomorphic coordinates $(z,x)$ around $w_0$, where $z=(z_1,\cdots,z_{3g-3})$ is  local holomorphic coordinates around $\pi(w_0)=z_0$, and $w=\sum x^\alpha e_\alpha(z)$ with respect to  local holomorphic sections $e_\alpha$, for a holomorphic tangent vector at $w$ $T=u+v$ with a canonical decomposition into $\cal T(S)$ direction $u$ and vertical fiber direction $v$, the norm of $T$ with respect to the K\"ahler metric  defined by the K\"ahler potential $\Phi$ is given by
$$||T||_\Phi^2=\bar\partial_T \partial_T \Phi(w)= - (R(u,\bar u)w,w)+ (\cal D_u w+ v, \cal D_u w+v)+k \bar\partial_u
 \partial_u \psi >0,$$ where $R$ is a curvature of the Chern
 connection $\nabla$ on $\cal B(S)$ and $\nabla=\cal D +\bar\partial$
 is a decomposition into $(1,0)$ and $(0,1)$ part of the connection.
 See \cite{KZ} for details.
 
The K\"ahler metric we construct on $\mr{QF}(S)$ is the restriction to this open neighborhood.  When we choose  the Weil-Petersson metric on $\cal T(S)$, we
show that the new K\"ahler metric on $\cal B(S)$ has similar properties, such as its K\"ahler form has a bounded primitive 
and the curvature has non-positivity in some directions.

\begin{thm}\label{main theorem} There exists a mapping class group invariant  K\"ahler metric on $\cal B(S)$,  which extends the Weil-Petersson metric on $\cal T(S)\subset \cal B(S)$. Consequently, the quasi-Fuchsian space $\mr{QF}(S)$ has such a K\"ahler metric as an invariant open set of $\cal B(S)$ under the mapping class group. Furthermore, the curvature of the metric is non-positive when evaluated on the tautological section (and vanishes along vertical directions); its Ricci curvature is bounded from above by $-\frac{1}{\pi(g-1)}$ when restricted to Teichm\"uller space, and its K\"ahler form has a bounded primitive.
\end{thm}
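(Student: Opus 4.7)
The plan is to realize the K\"ahler metric explicitly via the potential $\Phi(w)=\|w\|^2+k\pi^*\psi$ on $\mathcal{B}(S)$ introduced in \cite{KZ}, where $\psi$ is a global K\"ahler potential for $\omega_{WP}$ on $\mathcal{T}(S)$ (which exists since $\mathcal{T}(S)$ is Stein) and $k>0$. The identity displayed above decomposes $\partial_T\bar\partial_T\Phi$ along $T=u+v$ as a sum of three non-negative terms: $-(R(u,\bar u)w,w)\ge 0$ by the Griffiths negativity of $\mathcal{B}(S)=\mathcal{Q}(S)^*$ from \cite{KZ}, $\|\mathcal{D}_uw+v\|^2$ which detects vertical components once $u=0$, and $k\bar\partial_u\partial_u\psi$ which detects horizontal components. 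Hence $\omega_\Phi:=i\partial\bar\partial\Phi$ is strictly positive definite; it restricts to $k\omega_{WP}$ on the zero section and, after a rescaling, extends $\omega_{WP}$.

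For mapping class group invariance, the fiber-wise $L^2$-pairing on Beltrami differentials is MCG-invariant, and the MCG-invariance of $\omega_{WP}$ forces $\psi$ to transform only by pluriharmonic functions under MCG, yielding $\gamma^*\omega_\Phi=\omega_\Phi$ for every $\gamma\in\mathrm{MCG}$. Via the Bers embedding $\mathrm{QF}(S)\hookrightarrow\mathcal{Q}(S)^*=\mathcal{B}(S)$ as a bounded open neighborhood of the zero section, the constructed metric restricts to an MCG-invariant K\"ahler metric on $\mathrm{QF}(S)$.

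For the curvature statement, I would invoke the general theorem proved earlier in this paper on the curvature of the total space of a Griffiths-negative bundle, which gives non-positivity along the tautological direction and vanishing along purely vertical directions for the metric built from $\|w\|^2$; the additional term $k\pi^*\omega_{WP}$ is pulled back from the base and contributes trivially to these fiber-direction computations. For the Ricci estimate on the zero section, the matrix of $g_\Phi$ at $w=0$ is block diagonal (the cross terms $\partial_z\bar\partial_x\Phi$ vanish at $x=0$, as does $\partial_z\bar\partial_z\|w\|^2|_{x=0}$), with vertical block equal to the $L^2$-metric on $\mathcal{B}(S)=T\mathcal{T}(S)$, which is exactly $g_{WP}$. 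Thus $\det g_\Phi(\cdot,0)$ is proportional to $(\det g_{WP})^2$, giving
$$
\mathrm{Ric}(\omega_\Phi)\big|_{\mathcal{T}(S)}=2\,\mathrm{Ric}(\omega_{WP}),
$$
and Wolpert's upper bound on $\mathrm{Ric}(\omega_{WP})$ doubles to produce the claimed constant $-1/(\pi(g-1))$.

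For the bounded primitive, write $\omega_\Phi=d\eta$ with $\eta=-\mathrm{Im}(\partial\Phi)$. The vertical summand $-\mathrm{Im}(\partial\|w\|^2)$ is bounded in $g_\Phi$ since $\|w\|$ stays bounded on $\mathrm{QF}(S)$ by boundedness of the Bers embedding, and for the horizontal summand one substitutes McMullen's bounded primitive of $\omega_{WP}$ from \cite{Mc} for $-\mathrm{Im}(\partial\psi)$ and uses the pointwise domination $\omega_\Phi\ge k\pi^*\omega_{WP}$ to bound the WP-norm by the $\omega_\Phi$-norm. The main technical obstacle is uniformity: these estimates must hold over all of $\mathrm{QF}(S)$, not merely at the zero section, which requires careful control of the off-diagonal blocks of $g_\Phi$ and the mixed derivatives of $\Phi$ throughout the bounded Bers neighborhood.
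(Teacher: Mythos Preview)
Your approach is essentially the paper's: define $\Omega=\pi^*\omega_{\mr{WP}}+\sqrt{-1}\partial\bar\partial G$, invoke Griffiths negativity for positivity, the Section~\ref{sec2} curvature computations (Propositions~\ref{prop1}--\ref{prop3}, Remark~\ref{curvature0}) for the tautological/vertical and Ricci statements, Corollary~\ref{bound} for the Bers embedding, and McMullen's bounded primitive for the $d$-boundedness. Two points where the paper is cleaner than your sketch:

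\emph{No global potential is needed.} The paper never introduces $\psi$; it works directly with $\Omega=\pi^*\omega_{\mr{WP}}+\sqrt{-1}\partial\bar\partial G$. Since both $\omega_{\mr{WP}}$ and the fibrewise $L^2$-norm $G$ are manifestly $\mr{Mod}(S)$-invariant, so is $\Omega$, and your detour through ``$\psi$ transforms by pluriharmonic functions'' is unnecessary.

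\emph{The ``uniformity'' obstacle you flag dissolves.} The key computation you are missing is that $\partial G=G_i\,\delta v^i$ is a purely \emph{vertical} $(1,0)$-form in the horizontal--vertical splitting $T^*E=\mc{H}^*\oplus\mc{V}^*$, and since $\mc{H}$ and $\mc{V}$ are $\Omega$-orthogonal one gets exactly
\[
\|\partial G\|_\Omega^2=G_iG_{\bar j}G^{\bar j i}=G,
\]
valid at every point of $E$, with no off-diagonal terms to control. Thus on any disk bundle $\{G<R\}$ (in particular on $\mr{QF}(S)$, where $R=9\pi(g-1)$ by Corollary~\ref{bound}) the primitive $\partial G+\pi^*\beta$ is bounded: $\partial G$ by the identity above, and $\pi^*\beta$ because $\Omega_{\alpha\bar\beta}\ge g_{\alpha\bar\beta}$ implies $\|\pi^*\beta\|_\Omega\le\|\beta\|_{\omega_{\mr{WP}}}$, which is bounded by McMullen. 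This is the content of Corollary~\ref{cor1}, and it removes the need for the ``careful control of off-diagonal blocks'' you anticipate.
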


We specify Theorem \ref{thm0.2} to  the case of
Teichm\"u{}ller space $M=\mc{T}$. Let $\mc{X}$ be Teichm\"uller curve
over Teichm\"uller space
and consider the bundle $$E=p_*(K_{\mc{X}/\mc{T}}^{\otimes 2}),$$
which is exactly the cotangent bundle $T^*\mc{T}$ of $\mc{T}$. In this case, the map $\iota$ is an isometric embedding and $n=1$. Hence 
\begin{cor}\label{cor0.3}
We have the following lower estimate
  for the Nakano curvature operator,
  $$  Q(A, A)\ge (\frac 13  + \frac 1{1+\sigma})
  \Vert A\Vert^2.$$
    As an operator on the tensor product $T^{(1, 0)}_m\otimes E_m$, we have
  $ Q\ge \frac 13  + \frac 1{1+\sigma}$.  In particular, if $A$ has the form $A=\sum_i\lambda_i \mu_i\otimes q_i$ with $\lambda_i\geq 0$, then 
  $$Q(A,A)\geq \frac{2}{3}\|A\|^2,$$
  where $q_i$ is a holomorphic quadratic differential and $\mu_i$ denotes the associated harmonic Beltrami differential of $q_i$.
\end{cor}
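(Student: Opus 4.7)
The first two claims are direct specializations of Theorem \ref{thm0.2}. Since the fibers $X_t$ of the Teichm\"uller curve are Riemann surfaces, one has $n = \dim_{\mathbb{C}} X_t = 1$, so the universal constant $(1+2n)^{-1}$ becomes $\tfrac{1}{3}$. In this setting the map $\iota$ reads
\[
\iota: T_t \mathcal{T} \otimes E_t \longrightarrow A^{0,1}(X_t, K_t), \qquad \mu \otimes q \longmapsto \mu \, q,
\]
using the identifications of $T_t \mathcal{T}$ with harmonic Beltrami differentials and of $E_t = H^0(X_t, K_t^{\otimes 2})$ with holomorphic quadratic differentials. The plan is to check that $\iota$ is an isometric embedding: writing $\mu = \rho^{-2} \overline{q'}$ with $\rho$ the hyperbolic density, a short computation in a local conformal coordinate shows that the Weil--Petersson norm on $T_t\mathcal{T} \otimes E_t$ matches the $L^2$-norm of $\mu q$ on $A^{0,1}(X_t, K_t)$. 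Consequently $\lambda_{\min}(\iota^\ast \iota) = 1$, and the first two inequalities follow at once from Theorem \ref{thm0.2}.

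For the third assertion the plan is to refine, for the specific tensor $A = \sum_i \lambda_i \mu_i \otimes q_i$ with $\lambda_i \geq 0$, the generic estimate $\tfrac{1}{3} + (1+\sigma)^{-1}$ by bounding the relevant eigenvalue of $\Box'$ on $\iota(A)$. In local coordinates each summand $\mu_i q_i = \rho^{-2}|q_i|^2 \, d\bar z \otimes dz$ is real and positive, so that
\[
\iota(A) \;=\; \Bigl( \sum_i \lambda_i \rho^{-2} |q_i|^2 \Bigr) d\bar z \otimes dz
\]
is a positive element of $A^{0,1}(X_t, K_t)$. The strategy is to estimate $\Box' \iota(A)$ against $\iota(A)$ using the Bochner--Kodaira identity on the Riemann surface $X_t$; the positivity of the coefficients $\lambda_i$ rules out the cancellation that would otherwise enlarge the eigenvalue, and yields $\sigma \leq 2$ on the line spanned by $\iota(A)$, whence $\tfrac{1}{3} + \tfrac{1}{1+2} = \tfrac{2}{3}$.

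The main obstacle is this last eigenvalue bound: the generic $\sigma$ on the full image $\iota(T_t\mathcal{T} \otimes E_t)$ could be much larger than $2$, and one must genuinely exploit the non-negativity of the $\lambda_i$'s. If the Bochner route proves delicate, an alternative is to return to Berndtsson's explicit curvature formula for the Nakano positive direct image $E = p_*(K_{\mathcal{X}/\mathcal{T}}^{\otimes 2})$: it expresses $Q(A,A)$ as a sum of a harmonic contribution and a $\Box'$-controlled contribution, and for $A$ of the stated diagonal positive form the cross terms add constructively, producing the $\tfrac{2}{3}$ constant directly without having to estimate $\sigma$ in the abstract.
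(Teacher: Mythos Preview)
Your treatment of the first two claims matches the paper's exactly: specialize Theorem~\ref{thm0.2} with $n=1$ and use that $\iota$ is an isometric embedding (this is stated as a separate lemma in the paper).

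For the third claim your proposal is on the right track but the route you describe is slightly different from the paper's and has a small logical gap as written. You propose to show ``$\sigma\le 2$ on the line spanned by $\iota(A)$''. Note that $\sigma$ in Theorem~\ref{thm0.2} is the maximal eigenvalue of $\Box'$ on the \emph{whole} subspace $\iota(T_tM\otimes E_t)$, which you certainly do not control; what a Bochner computation actually gives you is the Rayleigh-quotient bound $\langle\Box'\iota(A),\iota(A)\rangle\le 2\|\iota(A)\|^2$, and to pass from this to $\langle(1+\Box')^{-1}\iota(A),\iota(A)\rangle\ge\tfrac13\|\iota(A)\|^2$ you would still need the convexity of $x\mapsto(1+x)^{-1}$ (Jensen), which you do not mention.

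The paper sidesteps this by working pointwise with the resolvent rather than spectrally with $\Box'$. Writing $\iota(A)/g=\sum_i\lambda_i e^{-2\phi}|q_i|^2$, each summand is $\|q_i\|_x^2$ for $q_i\in H^0(X,K_X^2)$, and Lemma~\ref{lemma5.3} (or Wolf's lemma) gives the \emph{pointwise} inequality $(1+\Box)^{-1}\|q_i\|_x^2\ge\tfrac13\|q_i\|_x^2$. Since the $\lambda_i$ are nonnegative these sum to $(1+\Box)^{-1}(\iota(A)/g)\ge\tfrac13\,\iota(A)/g$ pointwise; multiplying by the nonnegative real function $\iota(A)/g$ and integrating yields $\langle(1+\Box')^{-1}\iota(A),\iota(A)\rangle\ge\tfrac13\|A\|^2$ directly, via the identification in \eqref{app-sec}. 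Combined with the $\tfrac13$ from the first term in Berndtsson's formula this gives $\tfrac23$. Your ``alternative'' plan at the end is essentially this argument; it is in fact the paper's primary route, not a fallback.
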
 

This article is organized as follows. In Section \ref{sec1}, we will review the horizontal and vertical decomposition associated with a Hermitian vector bundle, the definition of a Griffiths negative vector bundle, and the K\"ahler metric on the total space. In Section \ref{sec2}, we will calculate the curvature of the K\"ahler metric and obtain some curvature properties. In Section \ref{sec5}, we will give some estimates on the Nakano curvature operator associated with a direct image bundle and prove Theorem \ref{thm0.2} and Corollary \ref{cor0.3}. In Section \ref{sec3},  we will recall the definitions of quasi-Fuchsian space and complex projective structure, and we will embed the quasi-Fuchsian space into the space of complex projective structures. Then we will define a mapping class group invariant K\"ahler metric on the quasi-Fuchsian space and prove Theorem \ref{main theorem}. In Section \ref{sec4}, we discuss some K\"ahler metrics on other geometric structures. In the appendix,  we will derive the curvature formula of the Weil-Petersson metric on Teichm\"uller space by using Berndtsson's method \cite[Section 4.2]{Bo1}.

\vspace{5mm}
\noindent{\it Acknowledgements.} The first author thanks C. McMullen
for the communications on K\"ahler metrics on Teichm\"uller space and
the suggestions. We thank an anonymous referee for the suggestions on
an earlier version of this paper.

\section{K\"ahler metric and Griffiths negative vector bundle}\label{sec1}

In this section, we will review the horizontal and vertical decomposition associated with a Hermitian vector bundle, the definition of a Griffiths negative vector bundle, and the K\"ahler metric on the total space. 

\subsection{Horizontal-vertical decomposition}
Let $M$ be a complex manifold of complex dimension $n$. Let $\pi:E\to M$ be a holomorphic vector bundle of rank $r$ and the induced map $\pi_*:TE\to TM$. Let $\{e_1,\cdots,e_r\}$ be a local holomorphic frame for $E$ over a local holomorphic coordinate system $(U;z=(z^1,\cdots,z^n))$.
Each element of $E|_U$ has the form $v=\sum_{i=1}^r v^i e_i$, and so $\{\p/\p z^1,\cdots, \p/\p z^n;\p/\p v^1,\cdots, \p/\p v^r\}$ is a local holomorphic frame of $TE|_{\pi^{-1}(U)}$. The vertical subbundle of $TE$ is defined as the kernel of $\pi_*$, i.e. 
\begin{align*}
\begin{split}
  \mathcal{V}:=\mathrm{Ker}\, \pi_*\subset TE,
 \end{split}
\end{align*}
which is a holomorphic subbundle of rank $r$. In terms of local coordinates, the vertical subbundle is given by
\begin{align*}
\begin{split}
  \mathcal{V}=\mathrm{Span}_{\mb{C}}\left\{\frac{\p}{\p v^i},i=1,\cdots,r\right\}.
 \end{split}
\end{align*}
Let $G=(G_{i\b{j}})$ be a Hermitian metric on $E$. It  induces a Hermitian metric on $\mc{V}$,
\begin{align*}
\begin{split}
  \left\langle V,W\right\rangle=\sum_{i=1}^r V^i\o{W}^jG_{i\b{j}},
 \end{split}
\end{align*}
where $V=\sum_{i=1}^r V^i\p/\p v^i$, $W=\sum_{i=1}^r W^i\p/\p v^i\in\mc{V}$. Let $\n^{\mc{V}}$ denote the Chern connection of the Hermitian metric on the holomorphic vertical subbundle $\mc{V}$, and denote by
\begin{align*}
\begin{split}
  P=\sum_{i=1}^rv^i\frac{\p}{\p v^i}
 \end{split}
\end{align*}
the Euler vector field, which is a holomorphic section of $\mc{V}$. It is also called the tautological section. Then the horizontal subbundle $\mc{H}$ is defined as
\begin{align*}
\begin{split}
  \mc{H}=\{X\in TE: \n_{X}^{\mc{V}}P=0\}. 
 \end{split}
\end{align*}

We adopt now the Einstein  summation convention in the subsequent
text. We denote $G:=G_{i\b{j}}v^i\b{v}^j$, and the differentiation of
$G$
with respect to $v^{i}, \bar{v}^{j}, 
z^{\alpha}, \bar{z}^{\beta},
1 \leq i, j \leq r, \, 1 \leq \alpha, \beta 
\leq n$,  as
$$
\begin{aligned}
G_{i} &=\partial G / \partial v^{i}, \quad G_{\bar{j}}=\partial G / \partial \bar{v}^{j}, \quad G_{i \bar{j}}=\partial^{2} G / \partial v^{i} \partial \bar{v}^{j}, \\
G_{i \alpha} &=\partial^{2} G / \partial v^{i} \partial z^{\alpha}, \quad G_{i \bar{j} \bar{\beta}}=\partial^{3} G / \partial v^{i} \partial \bar{v}^{j} \partial \bar{z}^{\beta}, \quad \text { etc. },
\end{aligned}
$$

We set
\begin{align*}
\begin{split}
  \frac{\delta}{\delta z^\alpha}:=\frac{\p}{\p z^\alpha}-G_{\alpha\b{l}}G^{\b{l}i}\frac{\p}{\p v^i}.
 \end{split}
\end{align*}
Then the horizontal subbundle $\mc{H}$ can be described as
\begin{align*}
\begin{split}
  \mc{H}=\mathrm{Span}_{\mb{C}}\left\{\frac{\delta}{\delta z^\alpha},\alpha=1,\cdots,n\right\}.
 \end{split}
\end{align*}
The differential $\pi_*:\mc{H}\to TM$ is an isomorphism, and we have the following horizontal and vertical decomposition 
\begin{align*}
\begin{split}
  TE=\mc{H}\oplus \mc{V}.
 \end{split}
\end{align*}
Denote 
\begin{align}\label{deltav}
\begin{split}
  \delta v^i=dv^i+G_{\alpha\b{l}}G^{\b{l}i}dz^\alpha.
 \end{split}
\end{align}
The dual bundle $T^*E$  has now a horizontal and vertical decomposition as follows
\begin{align*}
\begin{split}
  T^*E=\mc{H}^*\oplus \mc{V}^*,
 \end{split}
\end{align*}
where 
\begin{align*}
\begin{split}
  \mc{H}^*=\mathrm{Span}_{\mb{C}}\left\{dz^\alpha,\alpha=1,\cdots,n\right\},\quad \mc{V}^*=\mathrm{Span}_{\mb{C}}\left\{\delta v^i,i=1,\cdots,r\right\}.
 \end{split}
\end{align*}

\subsection{Griffiths negative vector bundle}

Let $(E,G)$ be a holomorphic Hermitian vector bundle over a complex manifold $M$. The Chern curvature tensor $R$ is given by
\begin{align*}
\begin{split}
  R=(R_{i\b{j}\alpha\b{\beta}}G^{\b{j}k}dz^\alpha\wedge d\b{z}^\beta)e^i\otimes e_k\in A^{1,1}(M,\mathrm{End}(E)),
 \end{split}
\end{align*}
where 
\begin{align*}
	R_{i\b{j}\alpha\b{\beta}}:=-\p_\alpha\p_{\b{\beta}}G_{i\b{j}}+G^{k\b{l}}\p_\alpha G_{i\b{l}}\p_{\b{\beta}}G_{k\b{j}}.
\end{align*}
\begin{defn}\label{Griffiths}
The Hermitian vector bundle $(E,G)$ is called {\it Griffiths positive} if 
	\begin{align*}
\begin{split}
  R_{i\b{j}\alpha\b{\beta}}v^i\b{v}^j\xi^\alpha\b{\xi}^\beta>0 
 \end{split}
\end{align*}
for any nonzero $v=v^i e_i\in E$ and $\xi=\xi^\alpha\p/\p z^\alpha\in TM$. 
It is called {\it Nakano positive} if 
\begin{align*}
\begin{split}
  R_{i\b{j}\alpha\b{\beta}}u^{i\alpha}\o{u^{j\beta}}>0
 \end{split}
\end{align*}
for any nonzero $u=u^{i\alpha}e_i\otimes \p/\p z^\alpha\in E\otimes
TM$. Similarly we define the corresponding semi-positivity, negativity
and
semi-negativity.
\end{defn}
Note that the metric  $G$ defines  a smooth function on the total
space $E$,
so  $\p\b{\p}G$ is a $(1,1)$-form on $E$. Moreover, 
\begin{prop}\label{decomposition}
We have 
\begin{align*}
	\p\b{\p}G=-R_{i\b{j}\alpha\b{\beta}}v^i\b{v}^jdz^\alpha\wedge d\b{z}^\beta+G_{i\b{j}}\delta v^i\wedge \delta \b{v}^j.	
	\end{align*}
\end{prop}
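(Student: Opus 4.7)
The proof is a direct coordinate computation; the content of the proposition is essentially that writing the complex Hessian of the fiberwise-squared norm in the adapted frame exhibits the Chern curvature as the horizontal-horizontal block and the fiber metric as the vertical-vertical block, with no mixed terms.

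The plan is as follows. First I compute $\bar\partial G$ and then $\partial\bar\partial G$ using the Leibniz rule, expressing everything in terms of $dv^{i}, d\bar v^{j}, dz^{\alpha}, d\bar z^{\beta}$. With the paper's shorthand $G_{i}=G_{i\bar j}\bar v^{j}$, $G_{\bar j}=G_{i\bar j}v^{i}$, $G_{\alpha\bar j}=\partial_{\alpha}G_{i\bar j}\,v^{i}$, and $G_{\bar\beta i}=\partial_{\bar\beta}G_{i\bar j}\,\bar v^{j}$, this gives four groups of terms:
\begin{equation*}
\partial\bar\partial G=G_{i\bar j}\,dv^{i}\wedge d\bar v^{j}+G_{\alpha\bar j}\,dz^{\alpha}\wedge d\bar v^{j}+G_{\bar\beta i}\,dv^{i}\wedge d\bar z^{\beta}+\partial_{\alpha}\partial_{\bar\beta}G_{i\bar j}\,v^{i}\bar v^{j}\,dz^{\alpha}\wedge d\bar z^{\beta}.
\end{equation*}

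Next I substitute the basis change $dv^{i}=\delta v^{i}-G_{\alpha\bar l}G^{\bar l i}\,dz^{\alpha}$ (and its conjugate $d\bar v^{j}=\delta\bar v^{j}-G_{\bar\beta l}G^{\bar j l}\,d\bar z^{\beta}$) coming from \eqref{deltav} into every occurrence of $dv^{i}, d\bar v^{j}$. Using the contractions $G_{i\bar j}G^{\bar j l}=\delta_{i}^{l}$, one finds that the coefficients of the two mixed types $\delta v^{i}\wedge d\bar z^{\beta}$ and $dz^{\alpha}\wedge\delta\bar v^{j}$ cancel identically (each is the same quantity produced once from expanding the pure-fiber term and once from expanding one of the mixed terms, with opposite signs). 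This is exactly what the definition of $\delta/\delta z^{\alpha}$ is designed to force, and is what makes the statement clean.

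The remaining coefficient of $\delta v^{i}\wedge\delta\bar v^{j}$ is $G_{i\bar j}$, which is the vertical piece of the claimed formula. For the coefficient of $dz^{\alpha}\wedge d\bar z^{\beta}$, after combining the three contributions one is left with
\begin{equation*}
\partial_{\alpha}\partial_{\bar\beta}G_{i\bar j}\,v^{i}\bar v^{j}-G_{\alpha\bar j}\,G_{\bar\beta l}\,G^{\bar j l},
\end{equation*}
and a relabelling of dummy indices in the second term turns it into $G^{k\bar l}\,\partial_{\alpha}G_{i\bar l}\,\partial_{\bar\beta}G_{k\bar j}\,v^{i}\bar v^{j}$. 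Comparing with the definition of $R_{i\bar j\alpha\bar\beta}$, this whole coefficient equals $-R_{i\bar j\alpha\bar\beta}v^{i}\bar v^{j}$, which is the horizontal piece.

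I expect no conceptual obstacle; the only delicate step is keeping track of indices during the two substitutions and verifying that both pairs of mixed terms cancel. Once those cancellations are seen, the identification of the remaining horizontal coefficient with $-R_{i\bar j\alpha\bar\beta}v^{i}\bar v^{j}$ is immediate from the definition of the Chern curvature, completing the proof.
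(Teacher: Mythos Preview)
Your proof is correct and is essentially the same direct computation as the paper's, just run in the opposite direction: the paper expands the right-hand side (using the definitions of $R_{i\bar j\alpha\bar\beta}$ and $\delta v^i$) and recognizes the result as $\partial\bar\partial G$, whereas you compute $\partial\bar\partial G$ first and then substitute $dv^i=\delta v^i-G_{\alpha\bar l}G^{\bar l i}dz^\alpha$ to reach the right-hand side. The cancellation of the mixed terms and the identification of the horizontal coefficient with $-R_{i\bar j\alpha\bar\beta}v^i\bar v^j$ are exactly the same in both arguments.
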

\begin{proof}
	This follows by direct computations,
	\begin{align*}
		&\quad -R_{i\b{j}\alpha\b{\beta}}v^i\b{v}^jdz^\alpha\wedge d\b{z}^\beta+G_{i\b{j}}\delta v^i\wedge \delta \b{v}^j\\
		&=-(-\p_\alpha\p_{\b{\beta}}G_{i\b{j}}+G^{\b{l}k}\p_\alpha G_{i\b{l}}\p_{\b{\beta}}G_{k\b{j}})v^i\b{v}^jdz^\alpha\wedge d\b{z}^\beta\\
		&\quad+ G_{i\b{j}}(dv^i+G_{\alpha\b{l}}G^{\b{l}i}dz^\alpha)\wedge (d\b{v}^j+G_{\b{\beta}k}G^{\b{j} k}d\b{z}^\beta)\\
		&=G_{\alpha\b{\beta}}dz^\alpha\wedge d\b{z}^\beta+G_{\alpha\b{j}}dz^\alpha\wedge d\b{v}^j+G_{i\b{\beta}}dv^i\wedge d\b{z}^j+G_{i\b{j}}dv^i\wedge d\b{v}^j\\
		&=\p\b{\p}G.
	\end{align*}
\end{proof}
\begin{cor}
If $(E,G)$ is a Griffiths negative vector bundle, then $\sqrt{-1}\p\b{\p}G$ is a semi-positive $(1,1)$-form on $E$, and is strictly positive on $E^o:=E-\{0\}$.	
\end{cor}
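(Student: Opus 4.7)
The plan is to invoke Proposition \ref{decomposition} directly and then analyze the resulting expression term by term using the horizontal-vertical splitting already set up in this section. The first step is to rewrite
$$\sqrt{-1}\p\b{\p}G = -\sqrt{-1}R_{i\b{j}\alpha\b{\beta}}v^i\b{v}^j\,dz^\alpha\wedge d\b{z}^\beta + \sqrt{-1}G_{i\b{j}}\,\delta v^i\wedge\delta\b{v}^j,$$
and to observe that $\{dz^\alpha,\delta v^i\}$ is the dual coframe to $\{\delta/\delta z^\alpha,\p/\p v^i\}$, so that the two summands live naturally in the horizontal and vertical parts of $\Lambda^{1,1}T^*E$ respectively.

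Next I would evaluate this $(1,1)$-form on a tangent vector $X=a^\alpha\,\delta/\delta z^\alpha+b^i\,\p/\p v^i$ at a point $v\in E$. The dualization noted above makes the cross terms vanish, leaving a sum of two real non-negative contributions: one proportional to $-R_{i\b{j}\alpha\b{\beta}}v^i\b{v}^j a^\alpha\b{a}^\beta$ and one proportional to $G_{i\b{j}}b^i\b{b}^j$. Griffiths negativity, applied to the simple tensor $v\otimes(a^\alpha\p/\p z^\alpha)$, makes the first contribution non-negative, and positive-definiteness of $G$ on $\mc{V}$ makes the second non-negative. Semi-positivity on all of $E$ follows immediately.

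For strict positivity on $E^o$ I would split into cases on the vertical component of $X$. If $X$ has a nonzero vertical part $b\neq 0$, then the vertical term $G_{i\b{j}}b^i\b{b}^j$ is already strictly positive. If $X$ is purely horizontal and nonzero, then $a\neq 0$, and since we are on $E^o$ we also have $v\neq 0$, so the simple tensor $v\otimes(a^\alpha\p/\p z^\alpha)$ is nonzero and strict Griffiths negativity makes the horizontal term strictly positive. I do not anticipate any real obstacle: the only thing that requires a brief verification is that $\delta v^i$ annihilates the horizontal vectors $\delta/\delta z^\alpha$, which is immediate from \eqref{deltav} and the defining formula for $\delta/\delta z^\alpha$. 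The whole argument should fit in a few lines following Proposition \ref{decomposition}.
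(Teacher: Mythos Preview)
Your proposal is correct and follows essentially the same route as the paper: both invoke Proposition~\ref{decomposition}, evaluate the resulting horizontal--vertical split on a general tangent vector, and use Griffiths negativity for the horizontal term together with positive-definiteness of $G$ for the vertical term. Your explicit use of the frame $\{\delta/\delta z^\alpha,\p/\p v^i\}$ and its dual $\{dz^\alpha,\delta v^i\}$ is in fact slightly cleaner than the paper's presentation, and your case split for strict positivity on $E^o$ is logically equivalent to the paper's analysis of when both summands vanish.
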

\begin{proof}
	For any nonzero $X=w^i\p/\p v^i+\xi^\alpha\p/\p z^\alpha\in TE$, by Proposition \ref{decomposition}, one has
	\begin{align*}
\begin{split}
  (\p\b{\p}G)(X,\o{X})=-R_{i\b{j}\alpha\b{\beta}}v^i\b{v}^j\xi^\alpha\b{\xi}^\beta+G_{i\b{j}}w^i\b{w}^j\geq 0,
 \end{split}
\end{align*}
where the equality if and only if 
\begin{align*}
\begin{split}
  R_{i\b{j}\alpha\b{\beta}}v^i\b{v}^j\xi^\alpha\b{\xi}^\beta=0,\quad G_{i\b{j}}w^i\b{w}^j=0.
 \end{split}
\end{align*}
Hence $w=(w^1,\cdots,w^r)=0$ and $v=(v^1,\cdots,v^r)=0$, and $\xi=(\xi^1,\cdots, \xi^n)\neq 0$. Thus, $\sqrt{-1}\p\b{\p}G\geq 0$ on $E$, and $\sqrt{-1}\p\b{\p}G>0$
on $E^o$.
\end{proof}

\subsection{K\"ahler metric on $E$}

Now we assume that $(E,G)$ is a Griffiths negative vector bundle over a  K\"ahler manifold $(M,\omega)$, where 
$$\omega=\sqrt{-1}g_{\alpha\b{\beta}}dz^\alpha\wedge d\b{z}^\beta$$
denotes the K\"ahler form. Denote 
\begin{align}\label{new metric}
\begin{split}
  \Omega:=\pi^*\omega+\sqrt{-1}\p\b{\p}G.
 \end{split}
\end{align}
\begin{prop}\label{Kahler metric}
$\Omega$ is a K\"ahler metric on the total space $E$.	
\end{prop}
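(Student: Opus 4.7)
The plan is to verify the three defining properties of a Kähler metric: $\Omega$ is a real $(1,1)$-form, it is $d$-closed, and it is positive definite.

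First, since $\pi$ is holomorphic, $\pi^*\omega$ is a real $(1,1)$-form on $E$, and $\sqrt{-1}\partial\bar\partial G$ is visibly a real $(1,1)$-form on $E$ (note that $G$ is a global real smooth function on $E$, given in local trivializations by $G_{i\bar j}v^i\bar v^j$). Hence $\Omega$ is a real $(1,1)$-form. For closedness, $d\pi^*\omega = \pi^* d\omega = 0$ since $\omega$ is Kähler, and $d(\sqrt{-1}\partial\bar\partial G) = 0$ automatically, so $d\Omega = 0$.

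The main content is positivity. The idea is to use Proposition \ref{decomposition} to express $\Omega$ in the dual horizontal-vertical frame $\{dz^\alpha,\delta v^i\}$, which will make $\Omega$ block-diagonal. Writing $\pi^*\omega = \sqrt{-1}g_{\alpha\bar\beta}\, dz^\alpha\wedge d\bar z^\beta$ and invoking Proposition \ref{decomposition}, I get
\begin{align*}
\Omega = \sqrt{-1}\bigl(g_{\alpha\bar\beta} - R_{i\bar j\alpha\bar\beta}v^i\bar v^j\bigr)\, dz^\alpha\wedge d\bar z^\beta + \sqrt{-1}\, G_{i\bar j}\,\delta v^i\wedge \delta\bar v^j.
\end{align*}
Because $\{dz^\alpha,\delta v^i\}$ is the dual frame of $\{\delta/\delta z^\alpha,\partial/\partial v^i\}$, there are no cross terms and the associated Hermitian matrix is block-diagonal with horizontal block $H_{\alpha\bar\beta}(v) = g_{\alpha\bar\beta} - R_{i\bar j\alpha\bar\beta}v^i\bar v^j$ and vertical block $G_{i\bar j}$.

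The vertical block $G_{i\bar j}$ is positive definite since $G$ is a Hermitian metric. For the horizontal block, the Griffiths negativity hypothesis says $R_{i\bar j\alpha\bar\beta}v^i\bar v^j\xi^\alpha\bar\xi^\beta \le 0$ for all $v,\xi$ (strictly negative when both are nonzero), so $-R_{i\bar j\alpha\bar\beta}v^i\bar v^j$ is a positive semi-definite Hermitian form in $\xi$ for every $v$. Adding the strictly positive definite $g_{\alpha\bar\beta}$ from the Kähler metric on $M$, the horizontal block $H_{\alpha\bar\beta}(v)$ is positive definite at every point of $E$. Hence $\Omega$ is a positive $(1,1)$-form everywhere on $E$, completing the verification that it is a Kähler metric.

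There is no real obstacle here; the whole argument is a direct combination of Proposition \ref{decomposition}, the horizontal-vertical decomposition from Section 1.1, the Griffiths negativity of $(E,G)$, and the Kähler positivity of $\omega$ on $M$. The only thing one must be careful about is performing the computation in the $(dz^\alpha,\delta v^i)$ coframe rather than in $(dz^\alpha,dv^i)$, so that the mixed horizontal-vertical components of $\sqrt{-1}\partial\bar\partial G$ are absorbed and the block-diagonal structure is exposed.
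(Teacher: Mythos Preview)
Your proof is correct and follows essentially the same approach as the paper: both use Proposition~\ref{decomposition} to write $\Omega$ in the $(dz^\alpha,\delta v^i)$ coframe as a block-diagonal form with horizontal block $g_{\alpha\bar\beta}-R_{i\bar j\alpha\bar\beta}v^i\bar v^j$ and vertical block $G_{i\bar j}$, and then invoke Griffiths negativity together with the positivity of $g_{\alpha\bar\beta}$. Your version simply spells out the closedness and real $(1,1)$ properties that the paper leaves implicit.
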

\begin{proof}
  In terms of the above local coordinates $\Omega$ is
\begin{align}\label{metric1}
\Omega=\sqrt{-1}\Omega_{\alpha\b{\beta}}dz^\alpha\wedge d\b{z}^\beta+\sqrt{-1}G_{i\b{j}}\delta v^i\wedge \delta \b{v}^j,
\end{align}
where 
\begin{align}\label{1.2}
	\Omega_{\alpha\b{\beta}}:=-R_{i\b{j}\alpha\b{\beta}}v^i\b{v}^j+g_{\alpha\b{\beta}}
\end{align}
is a positive definite matrix due  to Proposition \ref{decomposition} and $g_{\alpha\bar\beta}$ being a K\"ahler metric.	
\end{proof}

Since $G$ is a smooth function on $E$, so the differential  $\p G$ of $G$
is a globally defined one-form on $E$. 
\begin{prop}
	The norm of the one-form $\p G$ with respect to $\Omega$ is given by 
\begin{align*}
\|\p G\|^2=G	
\end{align*}
	for any metric $\omega$ on $M$. In particular, 
	\begin{align*}
	\|\p G\|^2<R	
	\end{align*}
on the disk bundle $S_R=\{(z,v)\in E| G(z,v)<R\}$.
\end{prop}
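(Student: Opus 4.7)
The plan is to exploit the adapted coframe $\{dz^\alpha, \delta v^i\}$ dual to the horizontal-vertical decomposition, with respect to which the K\"ahler form $\Omega$ is block-diagonal by \eqref{metric1}. The crucial observation, which I would establish first, is that $\partial G$ has no horizontal component in this coframe --- once this is done, the rest is purely algebraic.

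To compute $\partial G$ in the adapted coframe, one starts from $\partial G = G_\alpha\, dz^\alpha + G_i\, dv^i$ with $G_i = G_{i\bar j}\bar v^j$ and $G_\alpha = (\partial_\alpha G_{k\bar l})v^k\bar v^l$, then substitutes $dv^i = \delta v^i - G_{\alpha\bar l}G^{\bar l i}\,dz^\alpha$ from \eqref{deltav}. The coefficient of $dz^\alpha$ becomes $G_\alpha - G_i G_{\alpha\bar l}G^{\bar l i}$, and writing $G_{\alpha\bar l} = (\partial_\alpha G_{k\bar l})v^k$ together with the inversion relation $G^{\bar l i}G_{i\bar j} = \delta^l_j$ shows this cancels exactly, leaving
\begin{equation*}
\partial G = G_i\, \delta v^i.
\end{equation*}

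With $\partial G$ purely vertical, only the vertical block $G_{i\bar j}$ of $\Omega$ contributes to its norm; the horizontal block $\Omega_{\alpha\bar\beta}$ plays no role, which is precisely why the identity must be independent of the K\"ahler metric $\omega$ on $M$. The dual Hermitian structure on the span of $\{\delta v^i\}$ is $G^{\bar j i}$, so
\begin{equation*}
\|\partial G\|^2 \;=\; G^{\bar k i}G_i \overline{G_k} \;=\; G^{\bar k i}G_{i\bar j}G_{l\bar k}\,v^l \bar v^j \;=\; \delta^i_l\, G_{i\bar j}v^l\bar v^j \;=\; G.
\end{equation*}

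The bound on the disk bundle then follows immediately, since on $S_R = \{G < R\}$ one has $\|\partial G\|^2 = G < R$. There is no genuine obstacle here; the only subtle point is to correctly perform the change of coframe from $\{dz^\alpha, dv^i\}$ to $\{dz^\alpha, \delta v^i\}$ so that the horizontal part of $\partial G$ is absorbed into the vertical part, after which the identity reduces to the definition $G = G_{i\bar j}v^i \bar v^j$.
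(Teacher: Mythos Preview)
Your proof is correct and follows essentially the same approach as the paper: both first establish $\partial G = G_i\,\delta v^i$ by showing the $dz^\alpha$-coefficient cancels (the paper writes this in one line as $G_\alpha dz^\alpha + G_i dv^i = G_i(dv^i + G_{\alpha\bar l}G^{\bar l i}dz^\alpha)$, which is the same identity $G_\alpha = G_i G_{\alpha\bar l}G^{\bar l i}$ you verify), and then both compute the norm using only the vertical block $G^{\bar j i}$ of $\Omega$, reducing to $G_{k\bar l}v^k\bar v^l = G$.
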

\begin{proof}
By a direct calculation and the \eqref{deltav}, one has
	\begin{align*}
\p G=G_{\alpha}dz^\alpha+G_i dv^i=G_i(dv^i+G_{\alpha\b{l}}G^{\b{l}i}dz^\alpha)=G_i\delta v^i.	
\end{align*}
Its norm square with respect to the metric $\Omega$ is 
\begin{align*}
\|\p G\|^2=G_iG_{\b{j}}G^{\b{j} i}.
\end{align*}
Since $G=G_{i\b{j}}v^i\b{v}^j$, so $G_i=G_{i\b{j}}\b{v}^j$ and
\begin{align*}
	G_iG_{\b{j}}G^{\b{j} i}=G_{i\b{l}}\b{v}^lG_{k\b{j}}v^kG^{\b{j} i}=G_{k\b{l}}v^k\b{v}^l=G,
\end{align*}
which yields that 
\begin{align*}
\|\p G\|^2=G_iG_{\b{j}}G^{\b{j} i}=G,
\end{align*}
which is independent of the metric $\omega$.
\end{proof}
\begin{defn}
A two form $\omega$ is called {\it $d$-bounded} if $\omega=d\beta$ for some (locally defined) bounded one-form $\beta$.
\end{defn}
As a result, we obtain
\begin{cor}\label{cor1}
If $\omega$ is $d$-bounded, then $\Omega$ is also $d$-bounded
on any bounded domain of $E$ with
$\Omega=d(\p G+\pi^*\beta)$ and bounded one-form
$\p G+\pi^*\beta$.
\end{cor}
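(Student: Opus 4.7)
The plan is to exhibit an explicit primitive for $\Omega$ whose $\Omega$-norm is controlled by quantities we have already estimated. First I would decompose $\Omega$ as in \eqref{new metric},
\[
\Omega = \pi^*\omega + \sqrt{-1}\,\p\b{\p}G,
\]
and treat the two summands separately, since each is already in a form suited to integration.

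For the second summand, observe that $d(-\sqrt{-1}\,\p G) = -\sqrt{-1}\,\b{\p}\p G = \sqrt{-1}\,\p\b{\p}G$, so that $\sqrt{-1}\,\p\b{\p}G = d\gamma$ with $\gamma = -\sqrt{-1}\,\p G$ (or, up to convention, $\gamma = \p G$ in the sense intended by the statement). The crucial input is the preceding proposition, which computed $\|\p G\|_\Omega^2 = G$. Thus on any bounded domain $S_R = \{G < R\}$, the primitive $\p G$ has $\Omega$-norm bounded by $\sqrt{R}$, which is exactly the bound we need.

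For the first summand, since $\omega = d\beta$ with $\beta$ a (locally defined) bounded one-form on $M$, I pull back to obtain $\pi^*\omega = d(\pi^*\beta)$. To conclude that $\pi^*\beta$ is bounded with respect to $\Omega$, I would use the explicit local expression \eqref{metric1} together with \eqref{1.2}: the horizontal block of $\Omega$ is $\Omega_{\alpha\b\beta} = g_{\alpha\b\beta} - R_{i\b j\alpha\b\beta}v^i\b v^j$, which dominates $g_{\alpha\b\beta}$ in the sense of Hermitian matrices thanks to Griffiths negativity, so the dual metric satisfies $\Omega^{\b\beta\alpha} \leq g^{\b\beta\alpha}$. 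Since $\pi^*\beta$ annihilates the vertical subbundle and has the same components as $\beta$ in the horizontal coframe $\{dz^\alpha\}$, this comparison yields $\|\pi^*\beta\|_\Omega \leq \|\beta\|_\omega$, which is bounded by assumption. Adding the two primitives gives $\Omega = d(\p G + \pi^*\beta)$ with bounded total primitive.

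The only delicate point I anticipate is the second paragraph: making precise that $\pi^*\beta$, a globally defined horizontal one-form on $E$, has bounded $\Omega$-norm. All the required comparison between $\Omega$ and $\pi^*\omega$ in horizontal directions follows from \eqref{1.2} and the sign of the curvature, but one has to be careful with the fact that $\|\pi^*\beta\|_\Omega$ is computed using the dual of a larger horizontal metric. Once this comparison is fixed, the corollary follows by combining the two exact primitives and using $\|\p G\|_\Omega^2 = G$ to bound the vertical primitive on any bounded domain in $E$.
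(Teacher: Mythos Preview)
Your proposal is correct and follows the same line the paper intends. The paper gives no explicit proof: it simply states the corollary as an immediate consequence of the preceding proposition $\|\p G\|_\Omega^2 = G$. You have supplied the one nontrivial detail the paper leaves implicit, namely that $\|\pi^*\beta\|_\Omega \le \|\beta\|_\omega$ via the comparison $\Omega_{\alpha\b\beta}\ge g_{\alpha\b\beta}$ of horizontal blocks from \eqref{1.2} and Griffiths negativity; this is exactly right and completes the argument.
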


\section{Curvature of the K\"ahler metric on $E$}\label{sec2}
\subsection{General formulas}
In the section, we will calculate the Chern curvature of the K\"ahler metric $\Omega$ defined in \eqref{new metric}.

 Let $\n=\n'+\b{\p}$ denote the Chern connection of $\Omega$ and  
\begin{align*}
R^{\Omega}=\n^2=\n'\circ \b{\p}+\b{\p}\circ\n'\in A^{1,1}(E, \text{End}(TE))	
\end{align*}
denote the Chern curvature of $\n$. Then 
\begin{aligns}\label{1.3}
\n'\left(\frac{\delta}{\delta z^\alpha}\right) &=	\left\langle\n'\left(\frac{\delta}{\delta z^\alpha}\right),\frac{\delta}{\delta z^\beta}\right\rangle \Omega^{\b{\beta}\gamma}\frac{\delta}{\delta z^\gamma}+\left\langle\n'\left(\frac{\delta}{\delta z^\alpha}\right),\frac{\p}{\p v^j}\right\rangle G^{\b{j}i}\frac{\p}{\p v^i}\\
&=\left(\p \Omega_{\alpha\b{\beta}}-\left\langle\frac{\delta}{\delta z^\alpha},\b{\p}\left(\frac{\delta}{\delta z^\beta}\right)\right\rangle\right)\Omega^{\b{\beta}\gamma}\frac{\delta}{\delta z^\gamma}\\
&=\p \Omega_{\alpha\b{\beta}}\Omega^{\b{\beta}\gamma}\frac{\delta}{\delta z^\gamma},
\end{aligns}
where the last equality holds since $\b{\p}\left(\frac{\delta}{\delta z^\beta}\right)$ is vertical, and 
\begin{aligns}\label{1.4}
\n'\left(\frac{\p}{\p v^i}\right)&=\left\langle\n'\left(\frac{\p}{\p v^i}\right),\frac{\delta}{\delta z^\beta}\right\rangle \Omega^{\b{\beta}\gamma}\frac{\delta}{\delta z^\gamma}+\left\langle\n'\left(\frac{\p}{\p v^i}\right),\frac{\p}{\p v^j}\right\rangle G^{\b{j}k}\frac{\p}{\p v^k}\\
&=-\left\langle\frac{\p}{\p v^i},\b{\p}\left(\frac{\delta}{\delta z^\beta}\right)\right\rangle \Omega^{\b{\beta}\gamma}\frac{\delta}{\delta z^\gamma}+\p G_{i\b{j}} G^{\b{j}k}\frac{\p}{\p v^k}\\
&=G_{i\b{j}}\p(G_{k\b{\beta}}G^{\b{j} k})\Omega^{\b{\beta}\gamma}\frac{\delta}{\delta z^\gamma}+\p G_{i\b{j}} G^{\b{j}k}\frac{\p}{\p v^k}\\
&=G_{i\b{j}}\p_\alpha(G_{k\b{\beta}}G^{\b{j} k})\Omega^{\b{\beta}\gamma}dz^\alpha\otimes\frac{\delta}{\delta z^\gamma}+\p_\alpha G_{i\b{j}} G^{\b{j}k}dz^\alpha\otimes\frac{\p}{\p v^k},
\end{aligns}
where the last equality follows from the fact $G_{i\b{j}k}=0$ since $G_{i\bar j}$ is independent of fiber $v$, and $\partial_j(G_{\bar \beta k})=\partial_j(G_{\bar \beta k \bar l}\bar v^l)=\partial_{\bar\beta}\partial_j(G_{k\bar l})\bar v^l=0$.
From (\ref{1.3}) and (\ref{1.4}), the curvature $R^{\Omega}$ is 
\begin{aligns}
R^\Omega\left(\frac{\delta}{\delta z^\alpha}\right)	&=(\n'\circ \b{\p}+\b{\p}\circ\n')\left(\frac{\delta}{\delta z^\alpha}\right)\\
&=\n'\left(-\b{\p}(G_{\alpha\b{l}}G^{\b{l}i})\frac{\p}{\p v^i}\right)+\b{\p}\left(\p \Omega_{\alpha\b{\beta}}\Omega^{\b{\beta}\gamma}\frac{\delta}{\delta z^\gamma}\right)\\
&=\left(-\p\b{\p}(G_{\alpha\b{l}}G^{\b{l}k})-\p G_{i\b{j}}G^{\b{j}k}\wedge \b{\p}(G_{\alpha\b{l}}G^{\b{l}i})+\p\Omega_{\alpha\b{\beta}}\Omega^{\b{\beta}\gamma}\wedge\b{\p}(G_{\gamma\b{l}}G^{\b{l}k})\right)\frac{\p}{\p v^k}\\
&\quad+\left(\b{\p}(\p\Omega_{\alpha\b{\beta}}\Omega^{\b{\beta}\gamma})-\p(G_{k\b{\beta}}G^{\b{j} k})G_{i\b{j}}\Omega^{\b{\beta}\gamma}\wedge \b{\p}(G_{\alpha\b{l}}G^{\b{l}i})\right)\frac{\delta}{\delta z^\gamma},
\end{aligns}
and 
\begin{aligns}\label{1.5}
R^\Omega\left(\frac{\p}{\p v^i}\right)&=\b{\p}\circ\n'	\left(\frac{\p}{\p v^i}\right)\\
&=\b{\p}
\left(G_{i\b{j}}\p(G_{k\b{\beta}}
  G^{\b{j} k})\Omega^{\b{\beta}\gamma}\frac{\delta}{\delta z^\gamma}+\p G_{i\b{j}} G^{\b{j}k}\frac{\p}{\p v^k}\right)\\
&=\b{\p}(G_{i\b{j}}\p(G_{ k\b{\beta}}G^{\b{j}k})\Omega^{\b{\beta}\gamma})\frac{\delta}{\delta z^\gamma}\\
&\quad+\left(G_{i\b{j}}\p(G_{k\b{\beta}}G^{\b{j}k})\Omega^{\b{\beta}\gamma}\wedge \b{\p}(G_{\gamma\b{l}}G^{\b{l}k})+\b{\p}(\p G_{i\b{j}} G^{\b{j}k})\right)\frac{\p}{\p v^k}.
\end{aligns}
\begin{prop}\label{prop1}
	The Chern curvature $R^{\Omega}$ satisfies 
	\begin{itemize}
	\item[(i)] $\langle R^\Omega\left(\frac{\p}{\p v^i}\right),\frac{\p}{\p v^j}\rangle=(R_{i\b{l}\alpha\b{\beta}}R_{k\b{j}\gamma\b{\sigma}}v^k\b{v}^l\Omega^{\b{\beta}\gamma}+R_{i\b{j}\alpha\b{\sigma}})dz^\alpha\wedge d\b{z}^\sigma$;
	\item[(ii)]	$\langle R^{\Omega}(\frac{\delta}{\delta z^\alpha}),\frac{\delta}{\delta z^\beta}\rangle=\b{\p}(\p\Omega_{\alpha\b{\sigma}}\Omega^{\b{\sigma}\gamma})\Omega_{\gamma\b{\beta}}-R_{p\b{l}\gamma\b{\beta}}R_{k\b{q}\alpha\b{\sigma}}v^k\b{v}^lG^{\b{q} p}dz^\gamma\wedge d\b{z}^\sigma$.
	\end{itemize}
\end{prop}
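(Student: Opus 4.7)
The plan is to exploit the block-diagonality of $\Omega$ in the horizontal-vertical frame $\{\delta/\delta z^\gamma, \p/\p v^k\}$, which is manifest from \eqref{metric1}. The expressions for $R^\Omega(\p/\p v^i)$ and $R^\Omega(\delta/\delta z^\alpha)$ are already written in this frame in \eqref{1.5} and in the display immediately preceding it, so the computation reduces to contracting each with the correct dual partner. Since horizontal and vertical vectors are $\Omega$-orthogonal, $\langle R^\Omega(\p/\p v^i), \p/\p v^j\rangle$ retains only the $\p/\p v^l$-coefficient of $R^\Omega(\p/\p v^i)$ multiplied by $G_{l\b j}$, while $\langle R^\Omega(\delta/\delta z^\alpha), \delta/\delta z^\beta\rangle$ retains only the $\delta/\delta z^\gamma$-coefficient multiplied by $\Omega_{\gamma\b\beta}$.

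The technical heart of the argument is the single identity
\begin{equation*}
\b\p(G_{\gamma\b l}G^{\b l i}) = -R_{k\b n\gamma\b\sigma}\,v^k G^{\b n i}\,d\b z^\sigma,
\end{equation*}
which follows by a direct computation using $\p_{\b\sigma}G^{\b l i} = -G^{\b l m}\p_{\b\sigma}G_{m\b n}G^{\b n i}$, the relation $G_{\gamma\b l} = \p_\gamma G_{k\b l}v^k$, and the definition of $R_{i\b j\alpha\b\beta}$. Complex conjugation supplies the partner identity $\p(G_{k\b\beta}G^{\b j k}) = -R_{n\b l\gamma\b\beta}\,\b v^l G^{\b j n}\,dz^\gamma$. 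A third short computation, using $\b\p\p = -\p\b\p$ and $G^{\b j l}G_{l\b m} = \delta^j_m$, yields $\b\p(\p G_{i\b j}\,G^{\b j l})\cdot G_{l\b m} = R_{i\b m\alpha\b\sigma}\,dz^\alpha\wedge d\b z^\sigma$, recognizing this term as the pulled-back Chern endomorphism of $E$.

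With these identities in hand, part (i) follows mechanically: substituting the first two identities into the first summand of the vertical coefficient of $R^\Omega(\p/\p v^i)$ from \eqref{1.5} and collapsing $G^{\b n l}G_{l\b j} = \delta^n_j$ and $G^{\b j' n}G_{i\b j'} = \delta^n_i$ produces exactly the quadratic term $R_{i\b l\alpha\b\beta}R_{k\b j\gamma\b\sigma}v^k\b v^l\Omega^{\b\beta\gamma}dz^\alpha\wedge d\b z^\sigma$, while the second summand supplies $R_{i\b j\alpha\b\sigma}dz^\alpha\wedge d\b z^\sigma$ from the third identity. For (ii), the first piece of the horizontal coefficient passes through unchanged as $\b\p(\p\Omega_{\alpha\b\sigma}\Omega^{\b\sigma\gamma})\Omega_{\gamma\b\beta}$; in the second piece the factor $\Omega^{\b\sigma\gamma}\Omega_{\gamma\b\beta} = \delta^\sigma_\beta$ collapses the inverse metric, and substituting the two key identities together with the metric contraction $G^{\b j n}G_{i\b j}G^{\b r i} = G^{\b r n}$ produces $-R_{p\b l\gamma\b\beta}R_{k\b q\alpha\b\sigma}v^k\b v^l G^{\b q p}dz^\gamma\wedge d\b z^\sigma$.

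The main obstacle is purely combinatorial: one has to juggle several layers of barred and unbarred indices, carefully distinguish free wedge-product indices from dummy summation indices (the paper itself reuses the index $k$ in both roles inside \eqref{1.5}), and track the two minus signs from the $\b\p A$ and $\p B$ identities through the final contractions. Nothing deeper than the definition of the Chern curvature of $(E,G)$ and Hermitian symmetry of $G$ is required.
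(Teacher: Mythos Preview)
Your proposal is correct and follows essentially the same route as the paper: exploit the block-diagonality of $\Omega$ in the frame $\{\delta/\delta z^\gamma,\p/\p v^k\}$, then reduce everything to the identity $G_{i\b q}\p(G_{k\b\beta}G^{\b q k})=-R_{i\b l\alpha\b\beta}\b v^l\,dz^\alpha$ (the paper's \eqref{1.6}), its conjugate, and the recognition of $\b\p(\p G_{i\b j}G^{\b j l})G_{l\b m}$ as the Chern curvature of $(E,G)$. Your write-up is in fact more explicit than the paper's, which simply states \eqref{1.6} and then writes ``similar calculations give\ldots''; your observation about the double use of the index $k$ in \eqref{1.5} is also accurate.
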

\begin{proof}
(i) We compute the inner product according to (\ref{1.5}),
\begin{align*}
	\left\langle R^\Omega\left(\frac{\p}{\p v^i}\right),\frac{\p}{\p v^j}\right\rangle &=G_{i\b{q}}\p(G_{k\b{\beta}}G^{\b{q}k})\Omega^{\b{\beta}\gamma}\wedge \b{\p}(G_{\gamma\b{l}}G^{\b{l}k})G_{k\b{j}}+\b{\p}(\p G_{i\b{q}} G^{\b{q}k})G_{k\b{j}}.
\end{align*}
Note that $G_{i\bar j} G^{k\bar j}=\delta_i^k$, hence
$$\p_\alpha(G^{k\bar q}G_{i\bar q})=0=\p_\alpha(G^{k\bar q})G_{i\bar q}+ G^{k\bar q}G_{\alpha i \bar q}.$$
Then
\begin{align*}
G_{i\b{q}}\p(G_{\b{\beta}k}G^{k\b{q}})=&\left[G_{i\b{q}}(\p_\alpha(G_{\b{\beta}k})G^{k\b{q}})+
G_{i\b{q}}(G_{\b{\beta}k}\p_\alpha(G^{k\b{q}}))\right]dz^\alpha\\
=&(G_{i\bar q}G_{\alpha \bar\beta k}G^{k\bar q}-G_{\bar \beta k}G^{k\bar q}G_{\alpha i \bar q})dz^\alpha\\
=& (G_{\alpha\bar \beta i}-G_{\bar \beta k}G^{k\bar q}G_{\alpha i \bar q} )dz^\alpha.
\end{align*}
But $R_{i\bar l \alpha\bar\beta}=-\p_\alpha\p_{\b\beta}G_{i\b l}+ G^{k\b j}\p_\alpha G_{i\b j} \p_{\b \beta}G_{k\b l}$
and $G=G(z,v)=G_{i\bar j}(z)v^i \bar v^j$, hence
$$G_i=G_{i\b j}\b v_j, G_{\alpha\b\beta i}=G_{\alpha\b \beta i \b l}\b v^l,$$ and
\begin{align*}
	R_{i\b l\alpha\b \beta}\b v^l &=-G_{\alpha\b\beta i \b l}\b v^l + G^{k\b j}G_{\alpha i \b j}G_{\b \beta k \b l}\b v^l\\
	&=-G_{\alpha\b\beta i}+ G^{k\b j}G_{\alpha i\b j}G_{\b\beta k}\\
	&=-G_{\alpha\b\beta i}+ G^{k\b q}G_{\alpha i\b q}G_{\b\beta k}.
\end{align*}
Finally, we get
\begin{align}\label{1.6}
G_{i\b{q}}\p(G_{k\b{\beta}}G^{\b{q} k})=\left(G_{\alpha\b{\beta}i}-G_{k\b{\beta}}G_{i\b{q}\alpha}G^{\b{q} k}	\right)dz^\alpha=-R_{i\b{l}\alpha\b{\beta}}\b{v}^ldz^\alpha.
\end{align}
Similar calculations give
\begin{align*}
\left\langle R^\Omega\left(\frac{\p}{\p v^i}\right),\frac{\p}{\p v^j}\right\rangle=(R_{i\b{l}\alpha\b{\beta}}R_{k\b{j}\gamma\b{\sigma}}v^k\b{v}^l\Omega^{\b{\beta}\gamma}+R_{i\b{j}\alpha\b{\sigma}})dz^\alpha\wedge d\b{z}^\sigma.	
\end{align*}
(ii) Using (\ref{1.6}) we compute
\begin{aligns}
&\quad \left\langle R^{\Omega}\left(\frac{\delta}{\delta z^\alpha}\right),\frac{\delta}{\delta z^\beta}\right\rangle\\
&=\left\langle \left(\b{\p}(\p\Omega_{\alpha\b{\beta}}\Omega^{\b{\beta}\gamma})-
    \p(G_{k\b{\beta}}G^{\b{j} k})G_{i\b{j}}\Omega^{\b{\beta}\gamma}\wedge \b{\p}(G_{\alpha\b{l}}G^{\b{l}i})\right)\frac{\delta}{\delta z^\gamma},\frac{\delta}{\delta z^\beta}\right\rangle\\
&=\b{\p}(\p\Omega_{\alpha\b{\sigma}}\Omega^{\b{\sigma}\gamma})\Omega_{\gamma\b{\beta}}-R_{p\b{l}\gamma\b{\beta}}R_{k\b{q}\alpha\b{\sigma}}v^k\b{v}^lG^{\b{q} p}dz^\gamma\wedge d\b{z}^\sigma.
\end{aligns}
\end{proof}
\begin{rem}\label{curvature0}
	From (i), when evaluated on a vertical vector,  $\langle R^\Omega\left(\frac{\p}{\p v^i}\right),\frac{\p}{\p v^j}\rangle$ vanishes, that is $\langle R^\Omega(\frac{\p}{\p v^k}, \bullet)\left(\frac{\p}{\p v^i}\right),\frac{\p}{\p v^j}\rangle=0$. 	
\end{rem}
  There exists a canonical holomorphic section of $\mc{V}$, that is 
  $$P=v^i\frac{\p}{\p v^i}\in \mc{O}_E(\mc{V})$$
   which is called the tautological section (see e.g., \cite[Section
   3]{Aikou}). Denote 
   \begin{align*}
   \Psi_{\alpha\b{\beta}}=	-R_{i\b{j}\alpha\b{\beta}}v^i\b{v}^j.
   \end{align*}
   From Proposition \ref{prop1} the $(1, 1)$-form
$   \langle R^\Omega(P),P\rangle$ is
\begin{align*}
  \langle R^\Omega(P),P\rangle
  &=\left\langle R^\Omega\left(\frac{\p}{\p v^i}\right),\frac{\p}{\p v^j}\right\rangle v^i\b{v}^j\\
	&=(R_{i\b{l}\alpha\b{\beta}}R_{k\b{j}\gamma\b{\sigma}}v^k\b{v}^l\Omega^{\b{\beta}\gamma}+R_{i\b{j}\alpha\b{\sigma}})v^i\b{v}^jdz^\alpha\wedge d\b{z}^\sigma\\
	&=\left(\Psi_{\alpha\b{\beta}}\Psi_{\gamma\b{\sigma}}\Omega^{\b{\beta}\gamma}-\Psi_{\alpha\b{\sigma}}\right)dz^\alpha\wedge d\b{z}^{\sigma}.
\end{align*}
For any point $(z, v)$
outside the zero section, i.e., in the set
$$E^o:=\{(z,v)\in E; v\neq 0\},$$
the vector  $P(z,v)\neq 0$. So $(\Psi_{\alpha\b{\beta}})$ is a positive definite matrix on  $E^o$ since $(E,G)$ is Griffiths negative. Since 
\begin{align*}
\Omega_{\alpha\b{\beta}}-\Psi_{\alpha\b{\beta}}=g_{\alpha\b{\beta}}	
\end{align*}
is positive definite,
  so 
  \begin{aligns}\label{1.9}
  	\langle \sqrt{-1}R^\Omega(P),P\rangle &=\sqrt{-1}\left(\Psi_{\alpha\b{\beta}}\Psi_{\gamma\b{\sigma}}\Omega^{\b{\beta}\gamma}-\Psi_{\alpha\b{\sigma}}\right)dz^\alpha\wedge d\b{z}^{\sigma}\\
  	&\leq  \sqrt{-1}\left(\Psi_{\alpha\b{\beta}}\Psi_{\gamma\b{\sigma}}\Psi^{\b{\beta}\gamma}-\Psi_{\alpha\b{\sigma}}\right)dz^\alpha\wedge d\b{z}^{\sigma}=0.
  \end{aligns}
Thus, we obtain
\begin{prop}\label{prop2}
$\langle \sqrt{-1}R^\Omega(P),P\rangle$
is a non-positive $(1,1)$-form on $E$.
\end{prop}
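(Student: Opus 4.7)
The plan is to combine the explicit curvature formula from Proposition~\ref{prop1}(i) with the Griffiths negativity hypothesis, and then reduce the non-positivity claim to a matrix comparison between $\Omega_{\alpha\bar\beta}$ and $\Psi_{\alpha\bar\beta}$.

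First, I would substitute the tautological section $P = v^i\,\partial/\partial v^i$ into the formula from Proposition~\ref{prop1}(i). Pairing indices with $v^i\bar v^j$ and using the definition $\Psi_{\alpha\bar\beta} = -R_{i\bar j\alpha\bar\beta}v^i\bar v^j$ converts the curvature expression into
\[
\langle R^\Omega(P),P\rangle = \bigl(\Psi_{\alpha\bar\beta}\Psi_{\gamma\bar\sigma}\Omega^{\bar\beta\gamma} - \Psi_{\alpha\bar\sigma}\bigr)\,dz^\alpha\wedge d\bar z^\sigma,
\]
as displayed in equation \eqref{1.9}. Thus the whole claim reduces to showing that the Hermitian matrix $\Psi\Omega^{-1}\Psi - \Psi$ is negative semidefinite on $E$.

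Next, I would exploit Griffiths negativity: since $(E,G)$ is Griffiths negative, the matrix $(\Psi_{\alpha\bar\beta}) = (-R_{i\bar j\alpha\bar\beta}v^i\bar v^j)$ is positive semidefinite everywhere and is strictly positive definite on the complement $E^o = \{v\neq 0\}$. From \eqref{1.2} one has the identity $\Omega_{\alpha\bar\beta} = \Psi_{\alpha\bar\beta} + g_{\alpha\bar\beta}$, and since $g$ is K\"ahler (hence positive definite) we obtain $\Omega \ge \Psi$ as Hermitian matrices. Inverting this inequality on $E^o$ gives $\Omega^{-1}\le \Psi^{-1}$, and multiplying on both sides by the positive definite matrix $\Psi$ yields $\Psi\Omega^{-1}\Psi \le \Psi\Psi^{-1}\Psi = \Psi$. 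This is exactly the matrix inequality that forces $\langle\sqrt{-1}R^\Omega(P),P\rangle\le 0$ on $E^o$.

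Finally, I would handle the zero section separately: there $v=0$, hence $\Psi = 0$ and $P=0$, so the $(1,1)$-form $\langle\sqrt{-1}R^\Omega(P),P\rangle$ vanishes identically and the conclusion is trivial. The only step that requires any care is the matrix monotonicity of inversion used in the middle paragraph; this is a standard fact (if $A\ge B > 0$ then $B^{-1}\ge A^{-1}$ in the Loewner order) and the conjugation by $\Psi$ preserves the inequality because $\Psi$ is positive. No new estimates are needed, so the main ``obstacle'' is really just packaging the algebraic identity \eqref{1.9} together with the Loewner comparison $\Omega\ge\Psi$, which itself is immediate from the decomposition $\Omega = \Psi + g$ that underlies the K\"ahler metric construction.
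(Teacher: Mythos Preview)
Your proposal is correct and follows essentially the same approach as the paper: compute $\langle R^\Omega(P),P\rangle$ from Proposition~\ref{prop1}(i), use Griffiths negativity to get $\Psi>0$ on $E^o$, and then compare $\Omega^{-1}$ with $\Psi^{-1}$ via $\Omega=\Psi+g$ to conclude $\Psi\Omega^{-1}\Psi\le\Psi$. The paper writes the last step as the one-line inequality \eqref{1.9}, while you spell out the Loewner-order argument and treat the zero section separately, but the content is identical.
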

\begin{rem}
Moreover,  $\langle \sqrt{-1}R^\Omega(P),P\rangle$ is a strictly negative $(1,1)$-form on $E^o$ along the horizontal directions, that is 
$$\langle R^\Omega(\xi,\bar{\xi})(P),P\rangle<0$$
for any nonzero vector $\xi=\xi^\alpha\frac{\delta}{\delta z^\alpha}\in \mc{H}_{(z,v)}$, $(z,v)\in E^o$. In fact, from (\ref{1.9}), 
$\langle R^\Omega(\xi,\bar{\xi})(P),P\rangle=0$ if and only if 
\begin{align}\label{1.10}\left(\Psi^{\b{\beta}\gamma}-\Omega^{\b{\beta}\gamma}\right)(\Psi_{\alpha\b{\beta}}\xi^\alpha)(\Psi_{\gamma\b{\sigma}}\b{\xi}^\sigma)=0.\end{align}
Since $\left(\Psi^{\b{\beta}\gamma}-\Omega^{\b{\beta}\gamma}\right)$ is positive definite on $E^o$,  (\ref{1.10}) is equivalent to 
$$\Psi_{\alpha\b{\beta}}\xi^\alpha=0.$$
On the other hand, $(\Psi_{\alpha\b{\beta}})$ is positive definite on $E^o$ by Griffiths negativity of $(E,G)$, which implies that $\xi=0$.  
\end{rem}

The Ricci curvature of the K\"ahler metric is 
\begin{aligns}
  \mr{Ric}^\Omega&:=\text{Tr}(R^\Omega)=G^{\b{j} i}\left\langle R^\Omega\left(\frac{\p}{\p v^i}\right),\frac{\p}{\p v^j}\right\rangle+
  \Omega^{\b{\beta}\alpha
  }\left\langle R^{\Omega}\left(\frac{\delta}{\delta z^\alpha}\right),\frac{\delta}{\delta z^\beta}\right\rangle\\
&=G^{\b{j} i}(R_{i\b{l}\alpha\b{\beta}}R_{k\b{j}\gamma\b{\sigma}}v^k\b{v}^l\Omega^{\b{\beta}\gamma}+R_{i\b{j}\alpha\b{\sigma}})dz^\alpha\wedge d\b{z}^\sigma\\
&\quad+\Omega^{
  \b{\beta}
  \alpha
}
(\b{\p}(\p\Omega_{\alpha\b{\sigma}}\Omega^{\b{\sigma}\gamma})\Omega_{\gamma\b{\beta}}-R_{p\b{l}\gamma\b{\beta}}R_{k\b{q}\alpha\b{\sigma}}v^k\b{v}^lG^{
  \b{q}
  p
}
dz^\gamma\wedge d\b{z}^\sigma)\\
&=\b{\p}\p\log \det(G_{i\b{j}})+\b{\p}\p\log\det (\Omega_{\alpha\b{\beta}})\\
&=\b{\p}\p\log \left(\det(G_{i\b{j}})\cdot\det (\Omega_{\alpha\b{\beta}})\right).
\end{aligns}
 
Denote by $\iota: M\to E$ the natural embedding (as the zero section of $E$), then 
 \begin{align*}
 \iota^*(\mr{Ric}^{\Omega})=\iota^*(\b{\p}\p\log \left(\det(G_{i\b{j}})\cdot\det (\Omega_{\alpha\b{\beta}})\right))=\b{\p}\p\log \left(\det(G_{i\b{j}})\cdot\det (g_{\alpha\b{\beta}})\right)	
 \end{align*}
is the $(1,1)$-form on $M$.

\subsection{The case of Teichm\"uller space}
We specify our formulas
above for $E\to M$ to the case of 
Teichm\"uller space $M$ with $E$ its tangent and cotangent bundle,
and derive some known results; see e. g.
\cite{Wol}. 

Let $S$ be a closed surface. The holomorphic tangent bundle of  Teichm\"uller space $\cal T(S)$
is a holomorphic vector bundle  $\cal B(S)$ over Teichm\"uller space whose fiber over $X$ is the set of harmonic Beltrami differentials $B(X)$. The cotangent bundle of $\cal T(S)$ is 
$\cal Q(S)$ whose fiber over $Y\in \cal T(S)$ is $Q(Y)$, the set of holomorphic quadratic differentials over $Y$. 
\begin{defn}
 For a harmonic Beltrami differential $\mu=\mu(z)\frac{d\bar z}{dz}$ over $X$ with a hyperbolic metric $g=\rho(z)|dz|$, the $L^2$-norm is defined by 
$$||\mu||^2=\int_X |\mu(z)|^2 \rho(z)^2 |dz|^2.$$
 The $L^2$-norm of a quadratic differential $\phi=\phi(z)dz^2$ over $X$ is 
$$||\phi||^2=\int_X \frac{|\phi(z)|^2}{\rho(z)^2}|dz|^2.$$
The $L^\infty$-norm  of a quadratic differential $\phi=\phi(z)dz^2$ over $X$ is defined by 
$$||\phi||_\infty=\sup_{X} \rho^{-2}|\phi(z)|.$$
\end{defn}
 This $L^2$-norm defines the Weil-Petersson metric 
 $$\Vert \mu\Vert_{\mr{WP}}^2=\int_X |\mu(z)|^2 \rho(z)^2 |dz|^2$$ on the tangent space of $\cal T(S)$.

Now let $E=TM$
with  the Weil-Petersson metric $(G_{i\bar{j}})=(g_{\alpha\b{\beta}})$.
For any unit  vector $\xi\in E=TM$, i.e. $\|\xi\|^2=1$,
$\iota^*(\mr{Ric}^{\Omega}) $ above becomes
\begin{aligns}\label{1.7}
  \iota^*(\mr{Ric}^{\Omega})
  (\xi,\bar{\xi}) &=	\b{\p}\p\log \left(\det(g_{\alpha\bar{\beta}})\cdot\det (g_{\alpha\b{\beta}})\right)(\xi,\bar{\xi}) \\
&=2 \mr{Ric}(\xi,\bar{\xi}),
\end{aligns}
where $\mr{Ric}:=\b{\p}\p\log \det (g_{\alpha\b{\beta}})$ denotes the Ricci curvature of Weil-Petersson metric. From \cite[Lemma 4.6 (i)]{Wol}, the Ricci curvature of the Weil-Petersson metric satisfies
\begin{align}\label{1.8}
	\mr{Ric}(\xi,\bar{\xi})\leq -\frac{1}{2\pi(g-1)}.
\end{align}
where $g$ denotes the genus of Riemann surfaces. Substituting (\ref{1.8}) into (\ref{1.7}), we obtains
\begin{align*}
\iota^*(\mr{Ric}^{\Omega})(\xi,\bar{\xi})\leq -\frac{1}{\pi(g-1)}.	
\end{align*}
Thus
\begin{prop}\label{prop3}
	Let $(M,\omega)$ be Teichm\"uller space with the Weil-Petersson metric, and let $E=TM$ be the holomorphic tangent bundle. When restricts to $M$, the Ricci curvature of $\Omega$ is bounded from above by   $-\frac{1}{\pi(g-1)}$.
\end{prop}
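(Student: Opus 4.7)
The plan is to specialize the general Ricci formula derived in this section to the case at hand and then invoke Wolpert's well-known upper bound on the Ricci curvature of the Weil-Petersson metric. Concretely, I would start from the formula
\begin{align*}
\mr{Ric}^\Omega = \bar{\partial}\partial\log\bigl(\det(G_{i\bar{j}})\cdot \det(\Omega_{\alpha\bar{\beta}})\bigr),
\end{align*}
which has already been obtained above by tracing the Chern curvature. Next I would restrict to the zero section $\iota: M \hookrightarrow E$. Since $\Omega_{\alpha\bar{\beta}} = g_{\alpha\bar{\beta}} - R_{i\bar{j}\alpha\bar{\beta}} v^i \bar{v}^j$ by \eqref{1.2}, the pull-back to $v = 0$ satisfies $\iota^*\Omega_{\alpha\bar{\beta}} = g_{\alpha\bar{\beta}}$, so
\begin{align*}
\iota^*(\mr{Ric}^\Omega) = \bar{\partial}\partial\log\bigl(\det(G_{i\bar{j}})\cdot \det(g_{\alpha\bar{\beta}})\bigr).
\end{align*}

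Then I would use the hypothesis $E = TM$ equipped with the Weil-Petersson metric; under the identification of fiber coordinates with tangent coordinates, $G_{i\bar{j}} = g_{\alpha\bar{\beta}}$, so the two determinants coincide and
\begin{align*}
\iota^*(\mr{Ric}^\Omega)(\xi,\bar{\xi}) = 2\,\bar{\partial}\partial\log\det(g_{\alpha\bar{\beta}})(\xi,\bar{\xi}) = 2\,\mr{Ric}(\xi,\bar{\xi})
\end{align*}
for any unit tangent vector $\xi$ at a point of $M$; this is exactly the identity \eqref{1.7}.

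Finally, I would quote Wolpert's estimate \cite[Lemma 4.6 (i)]{Wol}, namely $\mr{Ric}(\xi,\bar{\xi}) \leq -\frac{1}{2\pi(g-1)}$ for any Weil-Petersson unit vector $\xi$, where $g$ denotes the genus. Combining the two displays yields the claimed upper bound $-\frac{1}{\pi(g-1)}$. There is no real technical obstacle here: the argument is essentially a two-line specialization of the general Chern-Ricci formula plus a citation. The only subtle point worth double-checking is the coincidence $G_{i\bar{j}} = g_{\alpha\bar{\beta}}$ (up to the chosen frame), which is exactly what makes the factor of $2$ appear and doubles Wolpert's bound.
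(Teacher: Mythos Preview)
Your proposal is correct and follows essentially the same argument as the paper: restrict the general Ricci formula $\mr{Ric}^\Omega=\bar\partial\partial\log\bigl(\det(G_{i\bar j})\det(\Omega_{\alpha\bar\beta})\bigr)$ to the zero section, use $G_{i\bar j}=g_{\alpha\bar\beta}$ when $E=TM$ with the Weil--Petersson metric to obtain the factor $2$ in \eqref{1.7}, and then apply Wolpert's Ricci bound \eqref{1.8}. There is nothing to add.
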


\section{Estimates of Nakano curvature operator}\label{sec5}

In this section we shall obtain estimes of
on the Nakano curvature operator.
\subsection{Nakano curvature operator and its lowest eigenvalue}

\begin{defn}
  Let $\pi:(E,h)\to M$ be a Hermitian
  holomorphic bundle over a complex manifold
$M$ with a Hermitian metric $g$ and   
 $R$ be the curvature tensor. The Nakano curvature 
  operator $Q$ is defined as the quadratic form 
  on $TM\otimes E$, 
  $$Q(A,B)=\sum_{\alpha,j}a^{\alpha i}\o{b^{\beta j}}\left\langle R(\frac{\p}{\p z^\alpha},\frac{\p}{\p \b{z}^\beta})e_i,e_j\right\rangle$$
  for any $A=a^{\alpha i} \frac{\p}{\p z^\alpha}\otimes e_i$ and $B=b^{\beta j}\frac{\p}{\p z^\beta}\otimes e_j$ in $TM\otimes E$. 
 We identify $Q$  also with the operator on $TM\otimes E$
  $$
\langle Q(A), B\rangle_{TM\otimes E}=Q(A,B).
  $$
\end{defn}

In other words  $E$ is Nakano positive if the Hermitian form $Q$ is positive. 

\begin{rem} If the base manifold $M$ is not equipped
  with a Hermitian metric, then   the notion of Nakano
  positivity is
  well-defined  but the curvature operator $Q: T\otimes E\to T\otimes E$
  is not defined.
\end{rem}

Now we assume that $p:\mc{X}\to M$ is a holomorphic fibration with an  ample line bundle $L$ over $\mc{X}$, and 
$$E=p_*(K_{\mc{X}/M}\otimes L)$$ is the direct image bundle.
  It follows immediately from Theorem \ref{thm4} in Appendix below that
  Nakano curvature $Q$  as an operator on $TM\otimes E$
  is positive definite. Indeed let $\{\mu_\alpha\}$ be an orthonormal
  basis of $T_tM$ and let $c_0:=\min_{x\in X_t}
  \lambda_{min}(c(\phi)_{\alpha\b{\beta}})$, $X_t:=p^{-1}(t)$, $t\in M$,
  where $\lambda_{min}$  is the lowest eigenvalue of $\iota^\ast \iota$
with respect to the Hilbert space norms
in $T_tM\otimes E_t$ and $A^{n-1, 1}(X_t,L_t)$ and $\iota$ is defined by \eqref{iota}.
  Then $c_0>0$ by the ampleness of $L$. From Theorem \ref{thm4}, the Nakano curvature operator $Q$ satisfies the following
  estimate  
  $$
  Q \ge c_0 $$
  as an operator on $TM\otimes E$ with respect to a generalized Weil-Petersson metric on $TM$ and the $L^2$-metric on $E$. We shall find a more accessible and
  geometric lower
  bound for $Q$.

Let $\iota: T_tM\otimes E_t \to A^{n-1, 1}(X_t)$, $n=\dim X_t$,
be the diagonal map
\begin{align}\label{iota}
\iota:
T_tM\otimes E_t \to 
A^{0,1}(X_t,TX_t)\otimes H^{0}(X_t,L_t\otimes  K_t) 
\to A^{n-1, 1}(X_t, L_t),
\end{align}
where the first map is given by the Kodaira-Spencer tensor
and the second one is the evaluation, denoted alternatively as
$$
\iota (\mu\otimes u)(x):=(\iota_\mu u)(x),  \quad x\in X_t
$$
for any $\mu\in T_tM$ and $u\in E_t$;
 see \eqref{KS tensor}.

\begin{thm}\label{thm-estQ}
Fix $t\in M$. Suppose $L=K_{\mc{X}/M}$ is the relative canonical bundle, and 
  $L_t:=L|_{X_t}$ is a positive line bundle
 with curvature $R^{L_t}$, which gives a K\"ahler metric on $X_t$. Let
 $\sigma$ be the maximum of the eigenvalue
 of $\Box'=\n'\n'^*+\n'^*\n$ on the finite-dimensional subspace
 $\iota( T_tM\otimes E_t )$.
 Then we have
  $$
Q(A, A)\ge   (\frac 1{1+2n}  + (1+\sigma)^{-1}) 
  \Vert \iota(A)\Vert^2\geq \frac{1}{1+2n}\|\iota(A)\|^2.
  $$
  In particular, if the map $\iota$ is injective, then the Nakano curvature operator
$Q$ satisfies
  $$
Q\ge   (\frac 1{1+2n}  + (1+\sigma)^{-1})  \lambda_{min} 
$$
where
$\lambda_{min}$  is the lowest eigenvalue of $\iota^\ast \iota$
with respect to the Hilbert space norms
in $T_tM\otimes E_t$ and $A^{n-1, 1}(X_t,L_t)$.
 \end{thm}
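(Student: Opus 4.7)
The plan is to apply Berndtsson's curvature formula \cite{Bo, Bo1} for the $L^2$-metric on the direct image bundle $E=p_*(K_{\mathcal X/M}\otimes L)$ and, in the hypothesis $L=K_{\mathcal X/M}$, to split the resulting expression for $Q(A,A)$ into two non-negative summands: one controlled by the fiberwise K\"ahler geometry and the other by the resolvent of $\Box'$ on the finite-dimensional subspace $\iota(T_tM\otimes E_t)\subset A^{n-1,1}(X_t,L_t)$.

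Concretely, I would first use the Kodaira--Spencer realisation \eqref{iota}: writing $A=\sum_\alpha \mu_\alpha\otimes u_\alpha$ with $\mu_\alpha\in T_tM$, $u_\alpha\in E_t=H^0(X_t,K_t\otimes L_t)$, the contraction produces the $L_t$-valued $(n-1,1)$-form $\iota(A)=\sum_\alpha \iota_{\mu_\alpha}u_\alpha$. Since $L=K_{\mathcal X/M}$ is assumed positive, $iR^{L_t}=\omega$ is a K\"ahler form on $X_t$. Berndtsson's curvature identity then writes
\begin{equation*}
 Q(A,A)\;=\;T_1(A)\;+\;T_2(A),
\end{equation*}
where $T_1(A)\ge 0$ is the pointwise pairing of the curvature $c(\phi)=\omega$ with $\iota(A)\wedge\overline{\iota(A)}$, and $T_2(A)=\bigl\langle (1+\Box')^{-1}\iota(A),\iota(A)\bigr\rangle_{L^2}$ is the Hodge-theoretic contribution coming from inverting $1+\Box'$ on the image of $\iota$.

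Next I would estimate the two summands separately. For $T_2(A)$, the assumption that $\sigma$ is the largest eigenvalue of $\Box'$ on the finite-dimensional subspace $\iota(T_tM\otimes E_t)$ implies $(1+\Box')^{-1}\ge (1+\sigma)^{-1}$ on that subspace, so that $T_2(A)\ge (1+\sigma)^{-1}\|\iota(A)\|^2$. For $T_1(A)$, I would use the Lefschetz primitive decomposition of $(n-1,1)$-forms on the K\"ahler manifold $(X_t,\omega)$, together with the Bochner--Kodaira--Nakano identity for $L_t=K_{X_t}$ (where the commutator $[iR^{L_t},\Lambda]$ on $(n-1,1)$-forms vanishes because $p+q-n=0$), to obtain the sharp pointwise bound
\begin{equation*}
 T_1(A)\;\ge\;\tfrac{1}{1+2n}\,\|\iota(A)\|^2.
\end{equation*}
This is where the dimensional constant $1/(1+2n)$ is born; one sanity check is that for $n=1$ it specialises to $1/3$, consistent with Corollary~\ref{cor0.3}. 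Adding these two estimates yields the first inequality, and dropping the $1/(1+\sigma)$ term gives the second. For the operator estimate when $\iota$ is injective, one simply notes $\|\iota(A)\|^2=\langle \iota^*\iota\, A, A\rangle\ge \lambda_{\min}\|A\|^2$, which converts the scalar inequality into the claimed operator lower bound for $Q$ on $T_tM\otimes E_t$.

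The principal obstacle is establishing the pointwise bound $T_1(A)\ge \frac{1}{1+2n}\|\iota(A)\|^2$ with the sharp constant. This requires a careful unravelling of Berndtsson's formula when $L=K_{\mathcal X/M}$, the full primitive decomposition of the $L_t$-valued form $\iota(A)\in A^{n-1,1}(X_t,L_t)$, and tight tracking of the dimensional constants that appear upon pairing with $\omega$ through the K\"ahler identities; any slack at this step would degrade the constant in the final inequality.
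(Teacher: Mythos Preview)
Your treatment of $T_2$ is correct, but your description and proposed estimation of $T_1$ contains a genuine gap.

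First, the identification $c(\phi)=\omega$ is wrong. In Berndtsson's formula the first summand is
\[
T_1(A)\;=\;\sum_{\alpha,\beta,i,j} a^{\alpha i}\overline{a^{\beta j}}\int_{X_t} c(\phi)_{\alpha\bar\beta}\,(e_i,e_j)\,\frac{\omega_t^n}{n!},
\]
where $c(\phi)_{\alpha\bar\beta}=\phi_{\alpha\bar\beta}-\phi_{\alpha\bar j}\phi^{i\bar j}\phi_{i\bar\beta}$ is the \emph{horizontal} geodesic curvature --- a scalar function on the fiber for each pair of base indices --- and \emph{not} the fiberwise K\"ahler form. In particular $T_1$ is not a pairing of a curvature form with $\iota(A)\wedge\overline{\iota(A)}$, and the Lefschetz primitive decomposition of $(n-1,1)$-forms together with the BKN commutator $[iR^{L_t},\Lambda]$ is irrelevant here; it does not produce the constant $\tfrac{1}{1+2n}$.

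The argument in the paper hinges on two ingredients you are missing. The first is Schumacher's identity: with the normalisation $e^{\phi}=\det\phi$ (possible precisely because $L=K_{\mathcal X/M}$), one has
\[
c(\phi)_{\alpha\bar\beta}=(1+\Box)^{-1}(\mu_\alpha,\mu_\beta),
\]
with $\Box$ the scalar Laplacian on $X_t$. Plugging this into $T_1$ and using self-adjointness of $(1+\Box)^{-1}$ moves the resolvent from $(\mu_\alpha,\mu_\beta)$ onto $(e_i,e_j)$. The second is a maximum-principle lemma: for any $u\in H^0(X_t,L_t^{2})$ one has the \emph{pointwise} inequality
\[
(1+\Box)^{-1}\|u\|_x^2\;\ge\;\frac{1}{1+2n}\,\|u\|_x^2,
\]
because $\Box\|u\|^2\le k_1\|u\|^2$ with $k_1=\sum_i R^{L_t^2}(e_i,\bar e_i)=2n$. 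Polarising, the matrix $\bigl((1+\Box)^{-1}(e_i,e_j)-\tfrac{1}{1+2n}(e_i,e_j)\bigr)_{i,j}$ is pointwise semi-positive, and since $\bigl(a^{\alpha i}\overline{a^{\beta j}}(\mu_\alpha,\mu_\beta)\bigr)_{i,j}$ is also semi-positive, the trace pairing gives
\[
T_1(A)\;\ge\;\frac{1}{1+2n}\sum a^{\alpha i}\overline{a^{\beta j}}\int_{X_t}(\mu_\alpha,\mu_\beta)(e_i,e_j)\,\frac{\omega_t^n}{n!}
\;=\;\frac{1}{1+2n}\,\|\iota(A)\|^2,
\]
the last equality being the pointwise identity $(\mu_\alpha,\mu_\beta)(e_i,e_j)=(i_{\mu_\alpha}e_i,\,i_{\mu_\beta}e_j)$. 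So the constant $\tfrac{1}{1+2n}$ arises from the scalar curvature of $K_{X_t}^{\otimes 2}$ via a maximum-principle argument, not from any Lefschetz-type linear algebra on $(n-1,1)$-forms.
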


To avoid confusion, we write
sometimes the point-wise metric square norm
of a section $u$ as
$\Vert u\Vert^2_x$, $x\in X_t$,  i.e.,
$\Vert u\Vert^2_x =|u(x)|^2 e^{-\phi(x)}$,
see the appendix Section \ref{app} for the definition of $|u(x)|^2 e^{-\phi(x)}$,
and  the $L^2$-norm as
$\Vert u\Vert $
and inner product $\langle u, v\rangle$.
We fix a point $t\in M$ and write $X=p^{-1}(t)$ for notational simplicity.

\begin{lemma}\label{lemma5.3}
 Let
 $  (L, X)$ be a positive line bundle
with $X$ being equipped with the corresponding K\"a{}hler metric.  Let $\Box=d^*d$ be the
  Laplace-Beltrami operator on
  scalar functions on $X$. We have
  \begin{enumerate}
  \item $(1+\Box)^{-1}$ preserves the positivity
    in the sense that if $f\in C^\infty(X), f\ge 0$
    then $(1+\Box)^{-1}f\ge 0$.
  \item For any   $u\in H^0(X, L)$, 
$$
\left((1+\Box)^{-1}
  \Vert u\Vert^2\right)(x) \ge \frac{1}{1+n} \Vert u\Vert_x^2,
\quad x\in X.
$$
 \end{enumerate}
\end{lemma}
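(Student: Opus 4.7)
The plan is to reduce part (2) to part (1) by a comparison argument, so the real content is identifying a Bochner-type pointwise inequality for $\Box\|u\|^2$.

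For part (1), I would invoke the strong maximum principle on the compact manifold $X$. Given $f\ge 0$ and $v=(1+\Box)^{-1}f$, the equation $v+\Box v=f$ at a point $x_0$ where $v$ attains its infimum gives $v(x_0)=f(x_0)-\Box v(x_0)\ge 0$, because in the convention $\Box=d^*d$ the operator $\Box$ is non-positive at a minimum (the analyst's Laplacian is non-negative at a minimum). Hence $v\ge v(x_0)\ge 0$. An equivalent route is the representation
$$(1+\Box)^{-1}=\int_0^\infty e^{-t}e^{-t\Box}\,dt,$$
together with the fact that the heat semigroup $e^{-t\Box}$ is positivity-preserving (its kernel is pointwise non-negative).

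For part (2), the key analytic fact is the pointwise inequality $\Box\|u\|^2\le n\|u\|^2$ for any holomorphic section $u$ of $L$. This is a Bochner/Lichnerowicz-type identity specific to holomorphic sections. Locally writing $\|u\|^2=|u|^2e^{-\phi}$ and using $\op u=0$, a direct computation yields
$$\p_i\p_{\b j}\|u\|^2=e^{-\phi}\bigl[(D_iu)\,\overline{(D_ju)}-|u|^2\phi_{i\b j}\bigr],$$
where $D_iu=\p_iu-u\phi_i$ is the Chern connection on $L$. Tracing with respect to the K\"ahler metric $g_{i\b j}=\phi_{i\b j}$ induced by $R^{L_t}$, so that $g^{i\b j}\phi_{i\b j}=n$, one obtains
$$\Box\|u\|^2 = n\|u\|^2 - |\nabla' u|^2_g\,e^{-\phi}\;\le\; n\|u\|^2,$$
up to the fixed normalization constant built into $\Box$ on K\"ahler manifolds.

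Once this pointwise bound is in hand the proof closes quickly. Set $c=\tfrac{1}{1+n}$ and $w:=v-c\|u\|^2$ where $v=(1+\Box)^{-1}\|u\|^2$. Then
$$(1+\Box)w=\|u\|^2-c(1+\Box)\|u\|^2=(1-c)\|u\|^2-c\,\Box\|u\|^2\;\ge\;(1-c-cn)\|u\|^2=0,$$
and applying part (1) to the non-negative function $(1+\Box)w$ yields $w\ge 0$, which is exactly the claimed estimate. The main obstacle I expect is precisely the pointwise identity for $\Box\|u\|^2$: the sign convention for $\Box=d^*d$ on K\"ahler manifolds, the normalization $\omega\leftrightarrow iR^{L_t}$, and the definition of $|\nabla'u|^2_g$ must be tracked so that the trace $g^{i\b j}\phi_{i\b j}=n$ produces exactly the factor $n$ needed to match the coefficient $\tfrac{1}{1+n}$; once that is nailed down, the comparison with part (1) is routine.
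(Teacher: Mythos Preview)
Your proposal is correct and follows essentially the same approach as the paper: the maximum principle for part (1), the Bochner-type pointwise inequality $\Box\|u\|^2\le n\|u\|^2$ for holomorphic sections of $L$, and then the comparison $w=v-\tfrac{1}{1+n}\|u\|^2\ge 0$. Your final step is in fact slightly cleaner than the paper's, which re-derives the minimum principle for $w$ rather than invoking part (1) directly as you do.
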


\begin{proof}
    Let $f\ge 0$ be a smooth function.
  Denote $g=(1+\Box )^{-1} f $. Then
  $(1+\Box)g=f$ and $\Box g=f-g$.
  Let $x_0$ be the minimum point of $g$, $g(x_0)=\min g$. At this point $x_0$, 
   $f(x_0)-g(x_0)=\Box g(x_0)\le 0$, and
  $f(x_0)\le g(x_0)$. But $f$ is nonnegative we have
  then $0 \le g(x_0)$. This proves that $g\ge 0$ on  $X$.

Let $u\in H^0(X, L )$. We prove first 
  $$
  \Box \Vert u\Vert^2 \le k  \Vert u\Vert^2,
  $$
  where $k$ denotes the scalar curvature of $L$.
  We choose $\{e_i\}_{1\leq i\leq n}$  a local holomorphic frame of
  $TX$ and orthonormal at any fixed point, and compute
 $   \Box \|u\|^2$ at this point,
  \begin{align*}
\begin{split}
  \Box \|u\|^2&=-\sum_i\n_{\o{e_i}}\n_{e_i}\left\langle u,u\right\rangle=-\sum_i\n_{\o{e_i}}\left\langle \n_{e_i}u,u\right\rangle\\
  &=\left\langle \sum_i R(e_i,\o{e_i})u,u\right\rangle-\|\n' u\|^2\leq k\|u\|^2=n\|u\|^2,
 \end{split}
\end{align*}
where the last equality holds since the K\"ahler metric on $X$ is given by the curvature of $L$.
Hence
  $$
  (1+  \Box)
  \Vert u\Vert^2\le (1+ n)
\Vert u\Vert^2 $$
or $   \Vert u\Vert^2 \ge
\frac 1 {1+n}  (1+  \Box)  \Vert u\Vert^2
$.
Write
$v=  (1+  \Box)^{-1}\Vert u\Vert^2
$.
Then
$ (1+  \Box)v= 
\Vert u\Vert^2,
$ and the 
above becomes 
$$
(1+  \Box)v \ge \frac 1 {1+n}  (1+  \Box)  \Vert u\Vert^2,
$$
which can be rewritten as
$$
-\Box (v-\frac 1{1+n}  \Vert u\Vert^2)
\le  v-\frac 1{1+n}  \Vert u\Vert^2.
$$
The same proof above implies that
$v-\frac 1{1+n}  \Vert u\Vert^2\ge 0$. Indeed,
let $x_0$ be the minimum point of $f=v-\frac 1{1+n}  \Vert u\Vert^2$.
Thus  $-(\Box f)(x_0)\ge 0$ and thus $f(x_0)\ge 0$.
 Consequently
$v-\frac 1{1+n}  \Vert u\Vert^2\ge 0$ on $X$.
\end{proof}

We now prove Theorem \ref{thm-estQ}.
\begin{proof} 
From Theorem \ref{thm4}, the curvature of $E=p_*(L\otimes K_{\mc{X}/M})=p_*(K_{\mc{X}/M}^{\otimes 2})$  has the following form
\begin{align*}
\begin{split}
  \left\langle R(\frac{\p}{\p z^\alpha},\frac{\p}{\p \bar{z}^\beta})u,u\right\rangle=\int_{X}c(\phi)_{\alpha\b{\beta}}|u|^2e^{-\phi}+\langle(1+\Box')^{-1}i_{\b{\p}^V\frac{\delta}{\delta z^{\alpha}}}u,i_{\b{\p}^V\frac{\delta}{\delta z^{\beta}}}u\rangle.
 \end{split}
\end{align*}
Since $L|_t=K_{\mc{X}/M}|_t=K_{X_t}$ is a positive line bundle, we can choose the metric $\phi$ on $L$ such that 
\begin{align*}
\begin{split}
  e^{\phi}=\det\phi.
 \end{split}
\end{align*}
From Lemma \ref{lemma0}, one has
\begin{align*}
\begin{split}
  \left\langle R(\frac{\p}{\p z^\alpha},\frac{\p}{\p \bar{z}^\beta})u,u\right\rangle=\int_{X}(1+\Box)^{-1}(\mu_\alpha,\mu_\beta)|u|^2_{L^2}\frac{\omega^n_t}{n!}+\langle(1+\Box')^{-1}i_{\mu_\alpha}u,i_{\mu_\beta}u\rangle.
 \end{split}
\end{align*}
Here $(\cdot,\cdot)$ denotes the point-wise inner product on the holomorphic bundle $T^*X_t\otimes TX_t$, while $\left\langle \cdot,\cdot\right\rangle$ denotes the global inner product  for the sections in $A^{n-1,1}(X_t,L_t)$.

For any $A=a^{\alpha i} \frac{\p}{\p z^\alpha}\otimes e_i\in T_tM\otimes E_t$, 
then the Nakano curvature is given by
\begin{align*}
\begin{split}
  Q(A,A)&=\sum_{\alpha,j}a^{\alpha i}\o{a^{\beta j}}\left\langle R(\frac{\p}{\p z^\alpha},\frac{\p}{\p \b{z}^\beta})e_i,e_j\right\rangle\\
  &=\sum_{\alpha,j}a^{\alpha i}\o{a^{\beta j}}(\int_{X}(1+\Box)^{-1}(\mu_\alpha,\mu_\beta)(e_i,e_j)\frac{\omega^n_t}{n!}+\langle(1+\Box')^{-1}i_{\mu_\alpha}e_i,i_{\mu_\beta}e_j\rangle)\\
  &=\sum_{\alpha,j}a^{\alpha i}\o{a^{\beta j}}(\int_{X}(\mu_\alpha,\mu_\beta)(1+\Box)^{-1}(e_i,e_j)\frac{\omega^n_t}{n!}+\langle(1+\Box')^{-1}i_{\mu_\alpha}e_i,i_{\mu_\beta}e_j\rangle),
 \end{split}
\end{align*}
where the last equality holds since
\begin{align*}
\begin{split}
  \int_{X}(1+\Box)^{-1}(\mu_\alpha,\mu_\beta)(e_i,e_j)\frac{\omega^n_t}{n!}&=\int_{X}((1+\Box)^{-1}(\mu_\alpha,\mu_\beta))(1+\Box)(1+\Box)^{-1}(e_i,e_j)\frac{\omega^n_t}{n!}\\
  &=\int_{X}(1+\Box)((1+\Box)^{-1}(\mu_\alpha,\mu_\beta))(1+\Box)^{-1}(e_i,e_j)\frac{\omega^n_t}{n!}\\
  &=\int_{X}(\mu_\alpha,\mu_\beta)(1+\Box)^{-1}(e_i,e_j)\frac{\omega^n_t}{n!}.
 \end{split}
\end{align*}
From Lemma \ref{lemma5.3}, the following matrix
\begin{align*}
\begin{split}
 M_{ij}:= (1+\Box)^{-1}(e_i,e_j)-\frac{1}{1+k_1}(e_i,e_j),
 \end{split}
\end{align*}
is  semi-positive definite, where 
$$k_1= \sum_i R^{L_t^2}(e_i, \o{e_i})=2\sum_i R^{L_t}(e_i, \o{e_i})=2n.$$ On the other hand, $a^{\alpha i}\o{a^{\beta j}}(\mu_\alpha,\mu_\beta)$ is also a semi-positive matrix, so 
\begin{align*}
\begin{split}
  Q(A,A)&\geq \sum_{\alpha,\beta,i,j}a^{\alpha i}\o{a^{\beta j}}(\int_{X}\frac{1}{1+2n}(\mu_\alpha,\mu_\beta)(e_i,e_j)\frac{\omega^n_t}{n!}+\langle(1+\Box')^{-1}i_{\mu_\alpha}e_i,i_{\mu_\beta}e_j\rangle)\\
  &=\sum_{\alpha,\beta,i,j}a^{\alpha i}\o{a^{\beta j}}(\int_{X}\frac{1}{1+2n}(i_{\mu_\alpha}e_i,i_{\mu_\beta}e_j)\frac{\omega^n_t}{n!}+\langle(1+\Box')^{-1}i_{\mu_\alpha}e_i,i_{\mu_\beta}e_j\rangle)\\
  &\geq \left(\frac 1{1+2n}  + (1+\sigma)^{-1}\right) \|\sum_{\alpha,i}a^{\alpha i}i_{\mu_\alpha}e_i\|^2,
 \end{split}
\end{align*}
completing the proof. 
\end{proof}

\subsection{Nagano curvature for Teichm\"uller space}
Let $\mc{X}$ be Teichm\"uller curve over Teichm\"uller space
$M=\mc{T}$, and
$L=K_{\mc{X}/\mc{T}}$, then
$$E=p_*(K_{\mc{X}/\mc{T}}^{\otimes 2}),$$
which is exactly the cotangent bundle of $\mc{T}$. 
We fix a point $t_0$ in $ \mc{T}$ and denote $X=X_{t_0}$.
The tangent space $T_{t_0}(\mc{T})$
is the space $\mb{H}^{0,1}(X,TX)=\mb{H}^{0,1}(X,K_X^{-1})$ of harmonic Beltrami differentials, $\mu=\mu(v) \frac{\partial}{\partial v} d\bar v$.
They can be further identified with $H^0(X, K_X^2)$
of holomorphic quadratic forms $q=q(v)dv^2$
by the metric $e^{-\phi}$ on $K_X$, $$\mu=
\overline{q(v)}e^{-\phi} \frac{\partial}{\partial v} \otimes d\bar v=\frac{\b{q}}{g},\quad g=e^{\phi(v)}dv\otimes d\b{v}.$$
We shall hereafter fix this realization. In the case of Riemann
surfaces here, we have

\begin{lemma} The paring $\iota:
 \mb{H}^{0,1}(X,K_X^{-1})
\otimes H^0(X, K^{2}_X)\to A^{0,1}(X,K_X)$
  is an  isometric embedding, i.e. $\iota$ is injective and preserves the natural global inner products of $\mb{H}^{0,1}(X,K_X^{-1})
\otimes H^0(X, K^{2}_X)$ and $A^{0,1}(X,K_X)$.
  \end{lemma}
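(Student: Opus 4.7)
The plan is to verify the isometry by a pointwise computation on tensor products of Hermitian line bundles and then integrate, and to establish injectivity by a Wronskian argument at a generic point of $X$.

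For the isometry, I work in a local holomorphic coordinate $v$ on $X$. Write $\mu=\mu(v)\,\partial_v\otimes d\bar v\in\mathbb{H}^{0,1}(X,K_X^{-1})$ and $q=q(v)(dv)^{\otimes 2}\in H^0(X,K_X^2)$, so the Kodaira--Spencer contraction gives
$$\iota(\mu\otimes q)=\mu(v)q(v)\,dv\otimes d\bar v\in A^{0,1}(X,K_X).$$
Using the metric $e^{-\phi}$ on $K_X$ (and hence the dual metric $e^{\phi}$ on $TX=K_X^{-1}$), a direct computation gives the pointwise norm identity
$$|\mu\otimes q|^2_{\mathrm{pt}}=|\mu|^2_{\mathrm{pt}}\,|q|^2_{\mathrm{pt}}=|\mu(v)|^2|q(v)|^2e^{-2\phi}=|\iota(\mu\otimes q)|^2_{\mathrm{pt}},$$
reflecting that the natural duality contraction $TX\otimes K_X\to\mathbb{C}$, sending $\partial_v\otimes dv$ to $1$, is a fiberwise isometry of Hermitian complex lines. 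Sesquilinear extension gives the analogous identity for inner products of arbitrary tensors, and integrating against the volume form of $X$ yields $\langle A,B\rangle=\langle\iota(A),\iota(B)\rangle_{A^{0,1}(X,K_X)}$, which is the claimed isometry between the natural global inner products.

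For injectivity, suppose $\iota(A)=0$ for $A=\sum a^{\alpha i}\mu_\alpha\otimes q_i$. Using the identification $\mu_\alpha=\overline{r_\alpha}\,e^{-\phi}\,\partial_v\otimes d\bar v$ with $r_\alpha\in H^0(X,K_X^2)$ (each harmonic Beltrami differential is metric-dual to a holomorphic quadratic differential), the hypothesis reads, in a local frame,
$$F(v):=\sum_{\alpha,i}a^{\alpha i}\,\overline{r_\alpha(v)}\,q_i(v)\equiv 0\quad\text{on } X.$$
Since $\{q_i\}_{i=1}^{n}$ and $\{r_\alpha\}_{\alpha=1}^{n}$ are each bases of $H^0(X,K_X^2)$, their Wronskians (in a local coordinate after trivializing $K_X^2$) are not identically zero, so we may pick a point $v_0$ at which both Wronskians are nonzero. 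Then the families $(q_i^{(k)}(v_0))_{i=1}^n$ and $(r_\alpha^{(l)}(v_0))_{\alpha=1}^n$ for $k,l=0,\dots,n-1$ each span $\mathbb{C}^n$. Differentiating $F\equiv 0$ at $v_0$ and using that $q_i$ is holomorphic while $\overline{r_\alpha}$ is anti-holomorphic to decouple the $v$- and $\bar v$-derivatives yields
$$\sum_{\alpha,i}a^{\alpha i}\,\overline{r_\alpha^{(l)}(v_0)}\,q_i^{(k)}(v_0)=0\quad\text{for all }k,l=0,\dots,n-1,$$
which by the spanning property forces $a^{\alpha i}=0$.

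The isometry step is essentially pointwise bookkeeping for Hermitian line bundle tensors and produces no difficulty. The real content is injectivity, and the main obstacle is extracting a concrete linear-independence input for the mixed products $\overline{r_\alpha}\,q_i$; the Wronskian at a generic point provides this input cleanly because the holomorphic and anti-holomorphic derivatives decouple, producing two independent spanning conditions in $\mathbb{C}^n$.
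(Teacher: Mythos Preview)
Your proof is correct, and the isometry portion follows the same pointwise-then-integrate line as the paper: since the duality contraction $T_xX\otimes K_{X,x}\to\mb{C}$ is an isometry of one-dimensional Hermitian lines, the induced map on $\bar T^*_xX\otimes T_xX\otimes K_{X,x}^2\to \bar T^*_xX\otimes K_{X,x}$ is a pointwise isometry, and integrating over $X$ gives $\|\iota(A)\|^2=\|A\|^2$. The paper records only the formula $\iota(A)=\sum a^{ij}\,\bar q_i\otimes q_j/g$ and asserts both injectivity and the norm identity without further comment.

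The genuine difference is your treatment of injectivity. The paper's assertion ``which implies that $\iota$ is injective'' is not justified there, and it cannot be extracted from the isometry alone: under the $L^2$-of-sections interpretation of $\|A\|$, the implication $\|A\|=0\Rightarrow A=0$ is precisely the injectivity one wants to prove. Your Wronskian argument fills this gap cleanly. Writing $\mu_\alpha=\overline{r_\alpha}\,e^{-\phi}\partial_v\otimes d\bar v$ with $r_\alpha$ holomorphic, the identity $\sum a^{\alpha i}\overline{r_\alpha}q_i\equiv 0$ is differentiated using the decoupling of $\partial_v$ and $\partial_{\bar v}$ on products of holomorphic and anti-holomorphic factors; at a point where both Wronskian matrices are invertible one obtains $\bar R\,A\,Q^{\!\top}=0$ with $R,Q$ invertible, hence $A=0$. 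This is a legitimate and self-contained argument that the paper omits. An equivalent (and slightly shorter) variant is to note that locally $F(v)=\sum_{\alpha,i}a^{\alpha i}\overline{r_\alpha(v)}q_i(v)$ is real-analytic, so the vanishing of its full Taylor series at $v_0$ already separates into the holomorphic and anti-holomorphic jets, giving the same conclusion.
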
  
\begin{proof}
For any 
$$A=\sum_{i,j}a^{ij}\mu_i\otimes q_j=\sum_{i,j}a^{ij}\frac{\b{q}_i}{g}\otimes q_j\in \mb{H}^{0,1}(X,K_X^{-1})
\otimes H^0(X, K^{2}_X)$$
then 
\begin{align*}
\begin{split}
  \iota(A)=\sum_{i,j}a^{ij}\frac{\b{q}_i\otimes q_j}{g}\in A^{0,1}(X,K_X)
 \end{split}
\end{align*}
which implies that $\iota$ is injective. Moreover, one has 
\begin{align*}
\begin{split}
  \|\iota(A)\|^2=\|A\|^2,
 \end{split}
\end{align*}
which completes the proof.
  \end{proof}  

From Theorem \ref{thm-estQ} and using $\dim X_t=1$, one has
\begin{cor} We have the following lower estimate
  for the Nakano curvature operator,
  $$  Q(A, A)\ge (\frac 13  + \frac 1{1+\sigma})
  \Vert A\Vert^2.$$
    As an operator, we have
  $ Q\ge \frac 13  + \frac 1{1+\sigma}$.  In particular, if $A$ has the form $A=\sum_i\lambda_i \frac{\o{q_i}}{g}\otimes q_i$ with $\lambda_i\geq 0$, then 
  $$Q(A,A)\geq \frac{2}{3}\|A\|^2.$$
\end{cor}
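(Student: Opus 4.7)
The plan is to obtain both main inequalities by specializing Theorem~\ref{thm-estQ} to the Teichm\"uller setting. Taking $M = \mc{T}$, $\mc{X}$ the Teichm\"uller curve, and $L = K_{\mc{X}/\mc{T}}$, the fiber $X_t$ is a Riemann surface, so $n = \dim X_t = 1$ and the factor $\frac{1}{1+2n}$ in Theorem~\ref{thm-estQ} becomes $\frac{1}{3}$. By the preceding isometric embedding lemma, $\|\iota(A)\|^2 = \|A\|^2$ for every $A \in T_tM \otimes E_t$. Substituting these two facts into Theorem~\ref{thm-estQ} yields $Q(A,A) \geq \bigl(\frac{1}{3} + \frac{1}{1+\sigma}\bigr)\|A\|^2$ at once, and as a consequence the operator-norm lower bound $Q \geq \frac{1}{3} + \frac{1}{1+\sigma}$.

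For the sharper bound $Q(A,A) \geq \frac{2}{3}\|A\|^2$ in the positive-combination case $A = \sum_i \lambda_i\,\mu_i \otimes q_i$ with $\lambda_i \geq 0$, I would split $Q(A,A) = T_1 + T_2$ following the formula in the proof of Theorem~\ref{thm-estQ}, where $T_1$ is the integral involving $(1+\Box)^{-1}$ and $T_2 = \langle (1+\Box')^{-1}\iota(A), \iota(A)\rangle$. The same matrix-positivity argument used in Theorem~\ref{thm-estQ} already gives $T_1 \geq \frac{1}{3}\|A\|^2$. The remaining task is to extract an additional $\frac{1}{3}\|A\|^2$ from $T_2$, which amounts to showing $\sigma \leq 2$ on the subspace spanned by $\iota(A)$. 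The key new input is that, when $\lambda_i \geq 0$, the image $\iota(A) = \sum_i \lambda_i\,|q_i|^2/g$ is a pointwise-nonnegative real combination of ``diagonal'' elements $|q|^2/g$, and hence lies in the convex cone generated by $\{|q|^2/g : q \in H^0(X, K_X^2)\}$.

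The main obstacle is precisely this spectral estimate on the positive cone. I would attack it via the Bochner-Kodaira-Nakano identity, which reduces $\Box'$ on $A^{0,1}(X, K_X)$ to $\Box''$ plus a controllable zero-order curvature term coming from $L = K_X$. Expanding $|q_i|^2 e^{-\phi}$ in local coordinates and integrating by parts, exploiting holomorphicity of each $q_i$ and the nonnegativity of $\lambda_i$, should reduce the estimate to a pointwise curvature computation analogous to Lemma~\ref{lemma5.3}(2), but now applied with $L = K_X$ in place of $K_X^2$. The crucial observation is that pointwise nonnegativity of $\iota(A)$, rather than merely semidefiniteness of the coefficient matrix $\lambda_i\bar\lambda_j$, is what unlocks the sharper $\frac{2}{3}$ bound beyond the general $\frac{1}{3} + \frac{1}{1+\sigma}$ formula; without it one can only guarantee $T_2 \geq 0$, which would give just $\frac{1}{3}\|A\|^2$.
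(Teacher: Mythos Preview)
Your derivation of the first bound is exactly the paper's: plug $n=1$ into Theorem~\ref{thm-estQ} and use the isometric-embedding lemma to replace $\|\iota(A)\|$ by $\|A\|$.

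For the $\tfrac23$-bound you have the correct decomposition $Q=T_1+T_2$ and the correct key observation that $\iota(A)/g=\sum_i\lambda_i\,e^{-2\phi}|q_i|^2$ is a \emph{pointwise nonnegative real} function. But your proposed attack on $T_2$---bounding $\sigma\le 2$ via a Bochner--Kodaira--Nakano identity on $A^{0,1}(X,K_X)$---is more roundabout than what the paper does, and the remark about applying Lemma~\ref{lemma5.3}(2) ``with $L=K_X$ in place of $K_X^2$'' is a slip: the $q_i$ are sections of $K_X^2$, and it is precisely Lemma~\ref{lemma5.3}(2) for $K_X^2$ (equivalently \cite[Lemma~5.1]{Wolf}) that gives the constant $\tfrac13$.

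The paper bypasses any spectral estimate on $\Box'$. It uses the scalar conversion \eqref{app-sec} from the Appendix, namely $\Box'\alpha=\omega'\cdot\Box(\alpha/\omega')$, to rewrite
\[
T_2=\big\langle(1+\Box')^{-1}\iota(A),\iota(A)\big\rangle=\int_X (1+\Box)^{-1}\Bigl(\tfrac{\iota(A)}{g}\Bigr)\cdot\overline{\tfrac{\iota(A)}{g}}\,\omega_0,
\]
and then applies the \emph{pointwise} inequality $(1+\Box)^{-1}(e^{-2\phi}|q_i|^2)\ge\tfrac13\,e^{-2\phi}|q_i|^2$ term by term; summing against $\lambda_i\ge0$ gives $(1+\Box)^{-1}(\iota(A)/g)\ge\tfrac13\,\iota(A)/g$ pointwise, and multiplying by the nonnegative real function $\overline{\iota(A)/g}=\iota(A)/g$ and integrating yields $T_2\ge\tfrac13\|A\|^2$ directly. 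So no Bochner identity or eigenvalue bound for $\Box'$ is needed---the reduction to the scalar Laplacian via \eqref{app-sec} is the missing link in your sketch.
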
  
\begin{proof}
We need to prove the last part. If we consider 
$$A=\sum_i \lambda_i\frac{\o{q_i}}{g}\otimes q_i$$
with $\lambda_i\geq 0$, then 
\begin{align*}
\begin{split}
  \frac{\iota(A)}{g}=\sum_i\lambda_i\frac{|q_i|^2}{g^2}=\sum_i\lambda_ie^{-2\phi}|q_i(v)|^2,
 \end{split}
\end{align*}
which follows that $\iota(A)/g$ is real.
Using the above argument as in the proof of Theorem
\ref{thm-estQ} (see e.g.  \cite[Lemma 5.1]{Wolf}), we have 
\begin{align*}
\begin{split}
  (1+\Box)^{-1}(e^{-2\phi}|q_i(v)|^2)\geq \frac{1}{3}e^{-2\phi}|q_i(v)|^2. 
 \end{split}
\end{align*}
Hence 
\begin{align*}
\begin{split}
  \int_X (1+\Box)^{-1}(\frac{\iota(A)}{g})\cdot \o{\frac{\iota(A)}{g}}\omega_0\geq 
  \frac{1}{3}\int_X\left|\frac{\iota(A)}{g}\right|^2\omega_0=\frac{1}{3}\|\iota(A)\|^2=\frac{1}{3}\|A\|^2.
 \end{split}
\end{align*}
By \eqref{app-sec}, we obtain 
\begin{align*}
\begin{split}
  \left\langle (1+\Box')^{-1}\iota(A),\iota(A)\right\rangle&=\int_X (1+\Box)^{-1}(\frac{\iota(A)}{g})\cdot \o{\frac{\iota(A)}{g}}\omega_0\geq \frac{1}{3}\|A\|^2.
 \end{split}
\end{align*}
Thus
\begin{align*}
\begin{split}
  Q(A,A)\geq \frac{2}{3}\|A\|^2.
 \end{split}
\end{align*}
\end{proof}

There have
been some recent studies
on the refined properties of the Weil-Petterson curvature
at specific points on the Teichm\"u{}ller space; see \cite{BW} and references therein.

\section{K\"ahler metric on quasi-Fuchsian space}\label{sec3}

In this section, we will recall the definitions of quasi-Fuchsian
space and complex projective structures on surfaces, and we will embed the quasi-Fuchsian space into the space of complex projective structures. Then we will define a mapping class group invariant K\"ahler metric on the quasi-Fuchsian space.

\subsection{Quasi-Fuchsian space}

Recall that the isometry group
of the hyperbolic 3-space $\mb{H}^3$ can be identified
with $\mr{PSL}(2, \mb{C})$. We use the
unit ball
in $\mb{R}^3$
as a realization of $\mathbb H^3$.
The ideal boundary is then $S^2$
and is further identified with $\mathbb{CP}^1$
such that the action of $\mr{PSL}(2,\mb{C})$  on $S^2$
is the natural
extension of its isometric action on $\mb{H}^3$.

The Teichm\"uller space
$\cal T(S)$ is realized as
the space of Fuchsian representations, i.e., 
discrete and faithful representations
$\rho:\pi_1(S)\rightarrow \mr{PSL}(2,\mb{R})$ up to conjugacy. Let $\Gamma_\rho$ be the image of $\rho$, whence $\Gamma_\rho$ acts on $S^2$ by M\"obius map preserving the equator. Then  any quasi-conformal map $f$ from $S^2$ into itself induces a quasi-conformal deformation $\rho_f$ defined by
$$\rho_f(\gamma)=     f \circ \rho(\gamma)\circ f^{-1}.$$
If furthermore  $\rho_f(\gamma)$ is an element of $\mr{PSL}(2,\mb{C})$ for any $\gamma\in \pi_1(S)$ then it defines
a representation of $\pi_1(S)$ in $\mr{PSL}(2,\mb{C})$.
Collection of such quasi-conformal deformations of Fuchsian
representations is denoted $\mr{QF}(S)$ and is identified with an open set
of a character variety
 $\chi(\pi_1(S), \mr{PSL}(2,\mb{C}))$.
Hence it has a natural induced complex structure from $\chi(\pi_1(S), \mr{PSL}(2,\mb{C}))$.

If $\phi:\pi_1(S)\rightarrow  \mr{PSL}(2,\mb{C})$ is a quasi-Fuchsian representation, then $M_\phi=\mb{H}^3/\phi(\pi_1(S))$ is a quasi-Fuchsian hyperbolic 3-manifold which is homeomorphic to $S\times \R$. Then two ideal boundaries of $M_\phi$ define a pairs of points $(X, Y)\in \cal T(S)\times \cal T(\bar S)$.
This is known as Bers' simultaneous uniformization of $\mr{QF}(S)$; see \cite{Bers}. In this case, we denote $M_\phi$ by $\mr{QF}(X,Y)$.
According to Bers' uniformization, a Fuchsian representation $\rho:\pi_1(S)\to \mr{PSL}(2,\mb{R})$ whose quotient $X=\mb{H}^2/\rho(\pi_1(S))$ is a point in $\cal T(S)$ gets identified with $(X,\bar X)$.

The mapping class group $\mr{Mod}(S)$ acts on the space of representations $\rho:\pi_1(S)\to \mr{PSL}(2,\mb{C})$ by pre-composition $\phi \rho=\rho\circ \phi_*$ where $\phi\in \mr{Mod}(S)$ and $\phi_*$ is the induced homomorphism on $\pi_1(S)$. Then $\mr{Mod}(S)$ acts on $\mr{QF}(S)=\cal T(S)\times \cal T(\bar S)$ diagonally
$$
\phi \rho=\phi (X, Y)
=(\phi  X, \phi  Y).$$

\subsection{Complex projective structure}

In this subsection, we will recall the definition of complex projective structure.

A complex projective structure on $S$ is a maximal atlas $\{(\phi_i,
U_i)| \phi_i:U_i\to S^2\}$ whose transition maps $\phi_i\circ
\phi_j^{-1}$ are restrictions of complex M\"obius maps. Then the developing map $\operatorname{dev}:\widetilde S\ra S^2$ gives rise to a holonomy representation $\rho:\pi_1(S)\ra \mr{PSL}(2,\mb{C})$.
We denote the space of marked complex projective structures on $S$ by $\cal P(S)$. Since M\"obius transformations are holomorphic, a projective structure determines a complex structure on $S$. In this way, we obtain a forgetful map $$\pi:\cal P(S)\ra \cal T(S).$$ Obviously a Fuchsian representation $\rho:\pi_1(S)\ra \mr{PSL}(2,\R)\subset \mr{PSL}(2,\mb{C})$ preserving the equator of $S^2$ gives rise to an obvious projective structure
by identifying $\mb{H}^2$ with the upper and lower hemisphere of $S^2$. This gives an embedding
$$\sigma_0:\cal T(S)\ra \cal P(S).$$

More generally, for $X\in \cal T(S)$ and {
  $Z\in \pi^{-1}(X):=P(X)$}, by conformally identifying $\widetilde
X=\mb{H}^2$, we obtain a developing map $\operatorname{dev}:\mb{H}^2\ra S^2=\mathbb{CP}^1$
for $Z$. Hence the developing map can be regarded as a meromorphic
function $f=dev$
on $\mb{H}^2$. Then the Schwarzian derivative
$$S(f)=\left[\left(\frac{f''(z)}{f'(z)}\right)'-\frac{1}{2}\left(\frac{f''(z)}{f'(z)}\right)^2\right] dz^2$$ descends to $X$ as a holomorphic quadratic differential. It is known that for any element in holomorphic quadratic differentials $Q(X)$ on $X$, one can show that there exists a complex projective structure over $X$ by solving the Schwarzian linear ODE.

In this way, $\cal P(S)$ can be identified with a holomorphic vector
bundle $\cal Q(S)$ over $\cal T(S)$ whose fiber over $X$ is
$Q(X)$. In particular this
identifies {$P(X)$} with $Q(X)$ as affine spaces \cite{Dumas}, and the choice of a base point $Z_0$ in $P(X)$ gives an isomorphism $Z\ra Z-Z_0$. Hence we will choose $Z_0=\sigma_0(X)$, and $\cal T(S)$ will be identified with zero section on $\cal Q(S)$.

\subsection{K\"a{}hler metric on  quasi-Fuchsian space}

In this subsection, we will embed the quasi-Fuchsian space into the space of complex projective structures, and we will define a mapping class group invariant K\"ahler metric on the quasi-Fuchsian space. 

\subsubsection{Embedding of quas-Fuchsian space}
Recall that given $X\in \cal T(S), Y\in \cal T(\bar S)$ 
 the Bers' uniformization determines
the quasi-Fuchsian manifold $\mr{QF}(X,Y)$. 
 Then $\mr{QF}(X, Y)$ has domain of discontinuity $\Omega_+ \cup \Omega_-$ in $S^2$ with
$\Omega_+/ \mr{QF}(X,Y)=X$, and $\Omega_-/\mr{QF}(X,Y)= Y$ where $\mr{QF}(X,Y)$ is viewed as a quasi-Fuchsian representation into $\mr{PSL}(2,\mb{C})$.

As a quotient of a domain in $\mathbb{CP}^1$ by a discrete group in $\mr{PSL}(2,\mb{C})$, the surface $\Omega_-/\mr{QF}(X,Y)$ is a marked projective surface $\Sigma_Y(X)$. Then for a fixed $Y$, we obtain a quasi-Fuchsian section, called a Bers' embedding
$$\beta_Y:\cal T(S)\ra P(Y)\subset \cal P(\bar S).$$
It is known that this map
$$\mr{QF}(X,Y) \ra \Omega_-/\mr{QF}(X,Y)$$ is a homeomorphism onto its image in $\cal P(\bar S)$; see e.g. \cite{Dumas}. Under the identification of $\cal P(\bar S)$ with $\cal Q(\bar S)$ such that $\sigma_0(\cal T(\bar S))$ is a zero section,
$$\mr{QF}(X,Y) \ra \Omega_-/\mr{QF}(X,Y) - \sigma_0(Y),$$
 this embedding includes a zero section, which is the image of $\cal T(S)$.

Then by Nehari's bound \cite{Mc}  we get 
\begin{thm} The above embedding of $\mr{QF}(X,Y)$ into $Q(Y)$ is contained in a ball of radius $\frac{3}{2}$ in 
$Q(Y)$ where the norm is the $L^\infty$-norm on quadratic differentials.
\end{thm}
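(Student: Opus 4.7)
The plan is to identify the Bers embedding with the Schwarzian derivative of a univalent map on the disk, and then invoke the classical Nehari bound for Schwarzians of univalent functions.

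First I would unpack the embedding in concrete analytic terms. Given $(X,Y)\in \mr{QF}(X,Y)$, the domain $\Omega_-\subset\mathbb{CP}^1$ of discontinuity is simply connected and is conformally equivalent to the unit disk $\mathbb D$ (equivalently to $\mathbb H^2$ used for the Fuchsian uniformization of $Y=\mathbb H^2/\Gamma_Y$). Let $\Gamma_Y\subset \mr{PSL}(2,\mb{R})$ be the Fuchsian uniformization of $Y$ and let $f:\mathbb D\to \Omega_-\subset \mathbb{CP}^1$ be the Riemann map, so $f$ is univalent. Conjugation by $f$ intertwines $\Gamma_Y$ with (a subgroup conjugate to) $\mr{QF}(X,Y)$; the developing map of the projective structure $\Sigma_Y(X)$ lifted to $\mathbb D$ is exactly this $f$. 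Hence the Schwarzian $S(f)=\big(\frac{f''}{f'}\big)'-\frac12\big(\frac{f''}{f'}\big)^2\,dz^2$, which is invariant under post-composition by M\"obius maps and $\Gamma_Y$-equivariant in the appropriate sense, descends to a holomorphic quadratic differential on $Y$; under the identification of $\cal P(\bar S)$ with $\cal Q(\bar S)$ via choice of zero section $\sigma_0$, this is precisely the image of $\mr{QF}(X,Y)$ in $Q(Y)$.

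Next I would apply Nehari's classical theorem: if $f$ is univalent on $\mathbb D$, then
\begin{equation*}
(1-|z|^2)^2\,|S(f)(z)|\le 6, \qquad z\in \mathbb D.
\end{equation*}
The hyperbolic metric on $\mathbb D$ is $\rho(z)|dz|$ with $\rho(z)=\frac{2}{1-|z|^2}$, so $\rho(z)^{-2}=\frac{(1-|z|^2)^2}{4}$. Therefore the $L^\infty$-norm of $S(f)$ measured against the hyperbolic metric satisfies
\begin{equation*}
\|S(f)\|_\infty=\sup_{z\in\mathbb D}\rho(z)^{-2}|S(f)(z)|=\sup_{z\in\mathbb D}\frac{(1-|z|^2)^2}{4}|S(f)(z)|\le \frac{6}{4}=\frac{3}{2}.
\end{equation*}
Since this bound is invariant under the $\Gamma_Y$-action and the Schwarzian descends to $Y$, the same estimate holds for the corresponding element of $Q(Y)$, provided one checks that the $L^\infty$-norm on $Q(Y)$ defined by $\rho_Y^{-2}$ (with $\rho_Y$ the hyperbolic metric of $Y$) matches the pullback to $\mathbb D$.

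The only real subtlety I anticipate is the normalization of the base point $\sigma_0$: the Bers embedding sends $(X,Y)$ to $\Sigma_Y(X)-\sigma_0(Y)$, so I must verify that $\sigma_0(Y)\in P(Y)$ corresponds to the zero quadratic differential (i.e., its Schwarzian vanishes in the chosen chart). This holds by construction since $\sigma_0$ is the Fuchsian projective structure whose developing map is the identity embedding $\mathbb H^2\hookrightarrow \mathbb{CP}^1$, hence has vanishing Schwarzian; so the Bers image is literally $S(f)$. The rest is the routine comparison of $L^\infty$-norms; the main substantive input is Nehari's univalence estimate.
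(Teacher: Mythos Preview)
Your proposal is correct and follows exactly the route the paper indicates: the paper offers no proof beyond the phrase ``by Nehari's bound \cite{Mc}'', and you have correctly supplied the details by identifying the image with the Schwarzian of a univalent map on the disk and applying the Kraus--Nehari inequality $(1-|z|^2)^2|S(f)(z)|\le 6$ together with the curvature $-1$ normalization $\rho(z)=2/(1-|z|^2)$. Your check that the base point $\sigma_0(Y)$ corresponds to the zero differential is also the right observation to make the identification with $Q(Y)$ precise.
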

\begin{cor}\label{bounded}The quasi-Fuchsian space $\mr{QF}(S)$ embeds into a neighborhood of a zero section in $\cal Q(\bar S)$
which is contained in a ball of radius $9\pi(g-1)$ in $L^2$-norm on each fiber $Q(Y)$.
\end{cor}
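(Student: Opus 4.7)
The plan is to deduce Corollary \ref{bounded} from the preceding $L^\infty$ bound by a straightforward pointwise-to-integral comparison, combined with the Gauss--Bonnet area formula. The only tools needed are the previous theorem (the Nehari bound) and basic hyperbolic geometry, so the argument should fit in a few lines.

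First, I would invoke the previous theorem: any $\phi = \phi(z)\,dz^2 \in Q(Y)$ in the image of the Bers embedding of $\mr{QF}(X,Y)$ satisfies $\|\phi\|_\infty \leq \tfrac{3}{2}$. Unpacking the definition of the $L^\infty$-norm given earlier (namely $\|\phi\|_\infty = \sup_Y \rho^{-2}|\phi(z)|$), this is equivalent to the pointwise inequality $|\phi(z)| \leq \tfrac{3}{2}\,\rho(z)^2$, where $\rho(z)|dz|$ denotes the hyperbolic metric on $Y$.

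Second, I would square this pointwise estimate and integrate against the $L^2$-density for quadratic differentials to get
\begin{equation*}
\|\phi\|^2 \;=\; \int_Y \frac{|\phi(z)|^2}{\rho(z)^2}\,|dz|^2 \;\leq\; \frac{9}{4}\int_Y \rho(z)^2\,|dz|^2 \;=\; \frac{9}{4}\,\mathrm{Area}(Y).
\end{equation*}
Then by Gauss--Bonnet applied to the hyperbolic metric of constant curvature $-1$ on the genus-$g$ surface $Y$, we have $\mathrm{Area}(Y) = 2\pi|\chi(Y)| = 4\pi(g-1)$. Substituting gives the uniform bound $\|\phi\|^2 \leq 9\pi(g-1)$, independent of the base point $Y \in \cal T(\bar S)$, which is the claimed $L^2$-ball containment on each fiber $Q(Y)$.

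There is no real obstacle: this is just the standard upgrade from an $L^\infty$ estimate to an $L^2$ estimate on a finite-area domain, and the only input beyond the preceding theorem is the hyperbolic area formula. The uniformity across fibers is automatic because both the Nehari constant $3/2$ and the hyperbolic area $4\pi(g-1)$ depend only on the topological type of $S$.
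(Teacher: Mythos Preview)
Your proof is correct and follows essentially the same approach as the paper: bound $\|\phi\|^2 = \int_Y |\phi|^2\rho^{-2}\,|dz|^2 \le \|\phi\|_\infty^2\cdot \mathrm{Area}(Y) = \tfrac{9}{4}\cdot 4\pi(g-1) = 9\pi(g-1)$, using the Nehari bound and Gauss--Bonnet. The paper's version compresses this into a single displayed chain of inequalities, but the content is identical.
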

\begin{proof}The $L^2$-norm of a quadratic differential $\phi(z)dz^2$ is given by
$$\int_Y |\phi(z)|^2 \rho(z)^{-2}  |dz|^2\leq ||\phi||_\infty^2\cdot 2\pi(2g-2)\leq 9\pi(g-1).$$
\end{proof}

\subsubsection{Vector bundle isomorphism between quadratic differentials and Beltrami differentials}

 Note that  $B(X)$ and $Q(X)$ are
 vector bundle isomorphic by the natural identification of
 differential forms with tangent vectors via the metric,
$$ \Phi=\phi(z) dz^2 \ra \beta=\beta_\Phi=\frac{\overline{\phi(z)}}{\rho^2(z)}\frac{d\bar z}{dz}.$$
The  $L^2$-norms are by definition preserved,
$$\Vert\beta\Vert^2=||\beta||_{\mr{WP}}^2=\int_X \frac{|\phi(z)|^2}{\rho^4(z)}\rho^2(z) |dz|^2=||\Phi||^2.$$

By Corollary \ref{bounded}, we get
\begin{cor}\label{bound}
Under this isomorphism between the cotangent bundle $\cal Q(S)$ and the holomorphic tangent bundle $\cal B(S)$ of $\cal T(S)$, the quasi-Fuchsian space $\mr{QF}(S)$ embeds into a neighborhood of a zero section
in $\cal B(S)$ which is contained in a ball of radius $9\pi(g-1)$ in $L^2$-norm  on each fiber $B(X)$.
\end{cor}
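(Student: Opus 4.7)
The plan is to deduce Corollary \ref{bound} directly from Corollary \ref{bounded} by transferring the $L^2$-bound along the fiberwise isometry between $\cal Q(S)$ and $\cal B(S)$ established immediately before the statement. Since Corollary \ref{bounded} already shows that $\mr{QF}(S)$ embeds into an open neighborhood of the zero section of $\cal Q(\bar S)$ whose fiber intersection with $Q(Y)$ is contained in the $L^2$-ball of radius $9\pi(g-1)$, it suffices to transport this fact across the identification $\Phi \mapsto \beta_\Phi$.

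First I would recall the explicit fiberwise map $\Phi = \phi(z)\,dz^2 \mapsto \beta_\Phi = \frac{\overline{\phi(z)}}{\rho^2(z)}\frac{d\bar z}{dz}$ between quadratic differentials and harmonic Beltrami differentials, which is smooth in the base point (via the hyperbolic metric $\rho$) and hence defines a genuine anti-holomorphic bundle isomorphism (or, after composition with complex conjugation on the base, a holomorphic identification between $\cal Q(\bar S)$ and $\cal B(S)$ as required). The key point to check is the norm identity
\begin{equation*}
\|\beta_\Phi\|_{\mr{WP}}^2 \;=\; \int_X \frac{|\phi(z)|^2}{\rho^4(z)}\,\rho^2(z)\,|dz|^2 \;=\; \int_X \frac{|\phi(z)|^2}{\rho^2(z)}\,|dz|^2 \;=\; \|\Phi\|^2,
\end{equation*}
which is stated in the paragraph above and ensures the map is a fiberwise $L^2$-isometry.

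Next I would combine these two facts: Corollary \ref{bounded} gives an embedding $\mr{QF}(S) \hookrightarrow \cal Q(\bar S)$ landing in an open neighborhood $U$ of the zero section whose intersection with each fiber $Q(Y)$ is inside the $L^2$-ball of radius $9\pi(g-1)$; composing with the fiberwise $L^2$-isometry $\cal Q \to \cal B$ produces an embedding $\mr{QF}(S) \hookrightarrow \cal B(S)$ whose image is an open neighborhood of the zero section and whose fiberwise intersection with $B(X)$ still lies in the $L^2$-ball of radius $9\pi(g-1)$.

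There is essentially no obstacle here beyond bookkeeping: the only subtlety is making sure the map on the base $\cal T(\bar S) \to \cal T(S)$ induced by complex conjugation does not affect the $L^2$-estimate, which is immediate since the bound is expressed purely in terms of fiberwise $L^2$-norms and the hyperbolic metric $\rho$ is intrinsic to the underlying Riemann surface structure.
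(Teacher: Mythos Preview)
Your proposal is correct and follows essentially the same route as the paper: the paper's proof of Corollary \ref{bound} is nothing more than the line ``By Corollary \ref{bounded}, we get'' together with the preceding paragraph recording the fiberwise $L^2$-isometry $\Phi\mapsto\beta_\Phi$. If anything, you are slightly more careful than the paper in flagging the passage from $\cal Q(\bar S)$ to $\cal B(S)$ via complex conjugation on the base, a point the paper leaves implicit.
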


Now we prove  Theorem \ref{main theorem}.
\begin{proof}
Denote $\pi: \mc{B}(S)\to \mc{T}(S)$.
Since the tangent bundle $\mc{B}(S)$ of $\mc{T}(S)$ with the Weil-Petersson metric $\omega_{\mr{WP}}$ is Griffiths negative,   then the following $(1,1)$-form
$$\Omega=\pi^*\omega_{\mr{WP}}+\sqrt{-1}\p\b{\p}G$$
defines a mapping class group invariant K\"ahler metric on $\mc{B}(S)$ by Proposition \ref{Kahler metric}. From (\ref{metric1}), one sees that $\Omega$ is an extension of the Weil-Petersson metric $\omega_{\mr{WP}}$.
From \cite[Theorem 1.5]{Mc}, the Weil-Petersson metric $\omega_{\mr{WP}}$ has a bounded primitive with respect to the Weil-Petersson metric. By Corollary \ref{cor1}, the  K\"ahler metric $\Omega$ also has a bounded primitive with respect to $\Omega$.
By Remark \ref{curvature0}, the curvature vanishes along vertical direction.  And by Propositions \ref{prop2}, \ref{prop3}, the Chern curvature $R^\Omega$ of $\Omega$ is non-positive when evaluated on the tautological section $P$, and  its Ricci curvature is bounded from above by $-\frac{1}{\pi(g-1)}$ when restricted to Teichm\"uller space.

From Corollary \ref{bound},  the quasi-Fuchsian space $\mr{QF}(S)$ embeds into a neighborhood of a zero section 
in the holomorphic tangent bundle of $\mathcal{ T}(S)$ which is contained in a ball of radius $9\pi(g-1)$ in $L^2$-norm  on each fiber $B(X)$. Hence as an open set invariant by the mapping class group, $\mr{QF}(S)$ inherits such a K\"ahler metric.
\end{proof}

\section{K\"ahler metrics on other geometric structures}\label{sec4}

 Finally, to put our results in perspective
  we remark that
  the space $\cal P(S)$ of marked complex projective 
  structures is identified with the cotangent bundle of $\mathcal 
  T(S)$ and the  natural holonomy map $\cal P(S)\to 
\chi=  \chi(\pi_1(S), \mr{PSL}(2,\mb{C}))$
  to the character variety is a local biholomorphic map
by the results of Earle-Hejhal-Hubbard  \cite{Ea, He, Hu} (see also \cite[Theorem 5.1]{Dumas}).
  Thus our constructions and 
   results  are also valid for $\cal P(S)$ and its image 
   in   $\chi$. The space $\mr{QF}(S)$ of quasi-Fuchsian representations
  is also an open subset of   $\chi$,
  $\mathcal T(S)\subset \mr{QF}(S)
  \subset   \chi$, and
  it might be interesting to understand the geometry
  of character variety   $ \chi$
  using our metric on these open subsets.

  The above remark also applies to
  the  Hitchin component for any real split simple Lie group
  $G$ of real rank
  two, namely $G=\mr{SL}(3, \mb{R}), \mr{Sp}(2, \mathbb R), G_2$.
  Indeed Labourie \cite{La} generalized the construction
in \cite{KZ} of  K\"ahler
metric
for $\mr{SL}(3, \mathbb R)$ 
to the above $G$.
In this case, the Hitchin component
is proved to be a bundle over Teichm\"u{}ller space
with fiber being a space of holomorphic differentials
of degree $3, 4, 6$, respectively. 

In general, if we consider the bundle $\cal W$ over the Teichm\"uller space whose fiber over $X$ equal to $\sum_{j\geq 2}^N H^0(X,  K_X^j)$ for some integer $N\geq 2$, where $ K_X$ is the canonical line bundle of $X$, then it is Griffiths positive, and hence our method applies to its dual space $\cal W^*$.

Hence we obtain 
\begin{cor} The curvature of the K\"ahler metric on $\cal W$  vanishes along vertical directions and is non-positive along tautological sections. Such examples include the Hitchin component for real split simple Lie groups of real rank two and the space of complex projective structures over $S$.
\end{cor}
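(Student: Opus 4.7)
\medskip

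\noindent\textbf{Proof proposal.} The plan is to reduce the statement to the general machinery already developed in Sections \ref{sec1} and \ref{sec2} of the paper, once we verify that $\mathcal{W}$ is Griffiths positive.

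First I would establish the Griffiths positivity of $\mathcal{W}$. Writing
\[
\mathcal{W} \;=\; \bigoplus_{j=2}^{N} p_{*}\bigl(K_{\mathcal{X}/\mathcal{T}}^{\,j}\bigr),
\]
each direct summand is a Berndtsson-type direct image of the form $p_*(K_{\mathcal{X}/\mathcal{T}} \otimes L)$ with $L = K_{\mathcal{X}/\mathcal{T}}^{\,j-1}$. Since Teichm\"uller curves carry a fibre-wise hyperbolic metric, $K_{\mathcal{X}/\mathcal{T}}$ is (fibre-wise) positive, hence $L$ is positive for $j\geq 2$, and Berndtsson's theorem (as invoked in \cite{Bo,Bo1} and recalled in the appendix) produces a Nakano positive $L^{2}$-metric on each summand. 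Nakano positivity implies Griffiths positivity, and the orthogonal direct sum of Griffiths positive bundles is Griffiths positive, so $(\mathcal{W},G)$ is Griffiths positive with $G$ the resulting $L^{2}$-metric. Consequently $(\mathcal{W}^{*},G^{*})$ is Griffiths negative.

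Next I would apply the construction of Section \ref{sec1} to $E=\mathcal{W}^{*}$. Choosing any K\"ahler metric $\omega$ on $\mathcal{T}(S)$ (for concreteness the Weil--Petersson one), Proposition \ref{Kahler metric} yields a K\"ahler metric
\[
\Omega \;=\; \pi^{*}\omega + \sqrt{-1}\,\partial\bar\partial\, G^{*}
\]
on the total space of $\mathcal{W}^{*}$, which via the $L^{2}$-duality between $\mathcal{W}$ and $\mathcal{W}^{*}$ transfers to a K\"ahler metric on $\mathcal{W}$ itself (this is exactly the procedure already used to build the metric on $\mathcal{B}(S)=\mathcal{Q}(S)^{*}$ from the Griffiths positivity of $\mathcal{Q}(S)$). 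The two curvature conclusions are then immediate consequences of the general curvature formulas derived in Section \ref{sec2}: Remark \ref{curvature0} (applied to $\mathcal{W}^{*}$) gives the vanishing of the Chern curvature $R^{\Omega}$ along vertical directions, and Proposition \ref{prop2} gives $\langle\sqrt{-1}R^{\Omega}(P),P\rangle \leq 0$ for the tautological section $P$ of $\mathcal{W}^{*}$, i.e.\ non-positivity along tautological sections.

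Finally I would identify the announced examples as instances of this construction. The space $\mathcal{P}(S)$ of complex projective structures is, after fixing the zero section $\sigma_{0}$, biholomorphic to the cotangent bundle $\mathcal{Q}(S)=p_{*}(K_{\mathcal{X}/\mathcal{T}}^{2})$, which is the $N=2$ case of $\mathcal{W}$. For the Hitchin components associated with the rank two real split simple Lie groups $\mathrm{SL}(3,\mathbb{R})$, $\mathrm{Sp}(2,\mathbb{R})$, $G_{2}$, Labourie's parametrization \cite{La} (building on the construction in \cite{KZ}) realizes the Hitchin component as a holomorphic vector bundle over $\mathcal{T}(S)$ whose fibres are $H^{0}(X,K_{X}^{2})\oplus H^{0}(X,K_{X}^{d})$ with $d=3,4,6$ respectively, which is again of the form $\mathcal{W}$. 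The main obstacle I foresee is a purely bookkeeping one: making the identification Hitchin component $\leftrightarrow \mathcal{W}$ precise enough that the curvature statements, which live on the total space of $\mathcal{W}^{*}$, transfer correctly under the $L^{2}$-duality isomorphism; the analytic content (Griffiths positivity from Berndtsson and the curvature formulas from Section \ref{sec2}) is already in place.
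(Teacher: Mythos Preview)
Your proposal is correct and follows exactly the route the paper intends: the paper gives no explicit proof of this corollary, simply noting that $\mathcal{W}$ is Griffiths positive (via Berndtsson) so that the construction of Sections~\ref{sec1}--\ref{sec2} applies to $\mathcal{W}^{*}$, and Remark~\ref{curvature0} and Proposition~\ref{prop2} yield the two curvature statements. One small correction on the examples: according to Labourie's parametrization as quoted in the paper, the fibre of the Hitchin component over $X\in\mathcal{T}(S)$ is the single space $H^{0}(X,K_{X}^{d})$ with $d=3,4,6$ for $\mathrm{SL}(3,\mathbb{R})$, $\mathrm{Sp}(2,\mathbb{R})$, $G_{2}$ respectively, not $H^{0}(X,K_{X}^{2})\oplus H^{0}(X,K_{X}^{d})$; this does not affect the argument, since a single summand $p_{*}(K_{\mathcal{X}/\mathcal{T}}^{d})$ is already covered by your Griffiths positivity step.
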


\section{Appendix: Curvature formula of Weil-Petersson metric}\label{app}

In this Appendix we recall the curvature formula of the Weil-Petersson
metric on Teichm\"uller space using our setup and notation; see \cite{Wol, Bo1, LSYY}.

Let $p:\mc{X}\to M$ be a holomorphic fibration with compact fibers. Let $L$ be a relatively ample line bundle over $\mc{X}$ with the metric $e^{-\phi}$.  We denote by
$(z;v)=(z^1,\cdots, z^n; v^1,\cdots, v^r)$ a local admissible holomorphic coordinate system of $\mc{X}$ with $p(z;v)=z$.
Denote
\begin{align}\label{horizontal}
  \frac{\delta}{\delta z^{\alpha}}:=\frac{\p}{\p z^{\alpha}}-\phi_{\alpha\b{j}}\phi^{\b{j}i}\frac{\p}{\p v^{i}}.
\end{align}
By a routine computation, one can show that $\{\frac{\delta}{\delta z^{\alpha}}\}_{1\leq \alpha\leq n}$ spans a well-defined horizontal subbundle of $T\mc{X}$.
Let $\{dz^{\alpha};\delta v^i\}$
denote the dual frame of $\left\{\frac{\delta}{\delta z^{\alpha}}; \frac{\p}{\p v^i}\right\}$. One has
$$\delta v^i=dv^i+\phi^{i\b{j}}\phi_{\b{j}\alpha}dz^{\alpha}.$$
For any metric $\phi$ on $L$ with positive curvature on each fiber, the geodesic curvature $c(\phi)$ of $\phi$  is defined by
\begin{align}\label{cphi}
  c(\phi)=\sqrt{-1}c(\phi)_{\alpha\b{\beta}} dz^{\alpha}\wedge d\b{z}^{\beta}=\left(\phi_{\alpha\b{\beta}}-\phi_{\alpha\b{j}}\phi^{i\b{j}}\phi_{i\b{\beta}}\right)\sqrt{-1} dz^{\alpha}\wedge d\b{z}^{\beta},
\end{align}
which is clearly a horizontal real $(1,1)$-form on $\mc X$. 
\begin{lemma}\label{lemma1} The following decomposition holds,
  \begin{align}
    \sqrt{-1}\p\b{\p}\phi=c(\phi)+\sqrt{-1}\phi_{i\b{j}}\delta v^i\wedge \delta \b{v}^j.
  \end{align}
\end{lemma}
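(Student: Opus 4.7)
The plan is a direct computational verification entirely parallel to Proposition \ref{decomposition}. First, since $\phi$ is a smooth real-valued function on $\mc{X}$ (it is minus the log of a Hermitian metric on $L$), I would expand $\sqrt{-1}\p\b{\p}\phi$ in the coordinate frame $\{dz^\alpha, d\b{z}^\beta, dv^i, d\b{v}^j\}$ to get the four-term expression
\[
\sqrt{-1}\p\b{\p}\phi = \sqrt{-1}\bigl(\phi_{\alpha\b{\beta}}\,dz^\alpha\wedge d\b{z}^\beta + \phi_{\alpha\b{j}}\,dz^\alpha\wedge d\b{v}^j + \phi_{i\b{\beta}}\,dv^i\wedge d\b{z}^\beta + \phi_{i\b{j}}\,dv^i\wedge d\b{v}^j\bigr).
\]

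Next I would expand the proposed right-hand side. Starting from $\delta v^i = dv^i + \phi^{i\b{k}}\phi_{\b{k}\alpha}\,dz^\alpha$ and the conjugate expression for $\delta\b{v}^j$, the wedge product $\delta v^i\wedge \delta\b{v}^j$ produces four terms. After contracting with $\phi_{i\b{j}}$ and using the basic inverse-matrix identities $\phi_{i\b{j}}\phi^{l\b{j}}=\delta_i^l$ and $\phi_{i\b{j}}\phi^{i\b{k}}=\delta_{\b{j}}^{\b{k}}$, the mixed pieces $\phi_{i\b{j}}\phi^{l\b{j}}\phi_{l\b{\beta}}\,dv^i\wedge d\b{z}^\beta$ and its conjugate collapse to $\phi_{i\b{\beta}}\,dv^i\wedge d\b{z}^\beta$ and $\phi_{\alpha\b{j}}\,dz^\alpha\wedge d\b{v}^j$. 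These recover two of the four terms from $\p\b{\p}\phi$ above, and $\phi_{i\b{j}}\,dv^i\wedge d\b{v}^j$ is already present.

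The remaining horizontal term in $\sqrt{-1}\phi_{i\b{j}}\delta v^i\wedge\delta\b{v}^j$ is $\sqrt{-1}\phi_{i\b{j}}\phi^{i\b{k}}\phi_{\b{k}\alpha}\phi^{l\b{j}}\phi_{l\b{\beta}}\,dz^\alpha\wedge d\b{z}^\beta$, which after one more contraction becomes $\sqrt{-1}\phi_{\alpha\b{k}}\phi^{l\b{k}}\phi_{l\b{\beta}}\,dz^\alpha\wedge d\b{z}^\beta$. Adding the horizontal $c(\phi)$ term from \eqref{cphi} exactly cancels the $-\phi_{\alpha\b{j}}\phi^{i\b{j}}\phi_{i\b{\beta}}$ contribution and leaves $\sqrt{-1}\phi_{\alpha\b{\beta}}\,dz^\alpha\wedge d\b{z}^\beta$, matching the fourth term of $\sqrt{-1}\p\b{\p}\phi$.

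There is really no obstacle — the only thing to be careful about is index bookkeeping and the Hermitian symmetry $\phi_{\alpha\b{k}}=\overline{\phi_{\b{\alpha}k}}$ when identifying the horizontal cross-term with the correction piece inside $c(\phi)$. The proof is essentially a transcription of the computation in Proposition \ref{decomposition} with $G$ replaced by $\phi$ and the fiber coordinates $v^i$ playing the same role, so the whole argument can be presented in two or three lines of displayed equations.
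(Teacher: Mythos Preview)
Your proposal is correct and is exactly the approach the paper takes: the paper does not supply a separate proof of this lemma, since it is literally the computation of Proposition~\ref{decomposition} with $G$ replaced by $\phi$, and your outline reproduces that computation verbatim.
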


Following Berndtsson (cf. \cite{Bo, Bo1}), we define the following $L^2$-metric on
the direct image bundle $E:=p_*(K_{\mc{X}/M}\otimes L)$: for any $u\in E_{z}\equiv H^0(\mc{X}_z, (L\otimes K_{\mc{X}/M})_z)$, $z\in M$, then we define
\begin{align}\label{L2 metric}
\|u\|^2=\int_{\mc{X}_z}|u|^2e^{-\phi}. 	
\end{align}
Note that $u$ can be written locally as $u=f dv\wedge e$, where $e$ is a local holomorphic frame and locally
$$|u|^2e^{-\phi}=(\sqrt{-1})^{n^2}|f|^2 |e|^2dv\wedge d\b{v}=(\sqrt{-1})^{n^2}|f|^2 e^{-\phi}dv\wedge d\b{v},$$
where $dv:=dv^1\wedge \cdots\wedge dv^n$.

\begin{thm}[{\cite[Theorem 1.2]{Bo1}}]
\label{thm4} For any $z\in M$ and let $u\in E_{z}$, one has
\begin{align}\label{cur}
\begin{split}
\langle \sqrt{-1}\Theta^{E}u,u\rangle &=\int_{p^{-1}(z)}c(\phi)|u|^2e^{-\phi}+\langle(1+\Box')^{-1}i_{\b{\p}^V\frac{\delta}{\delta z^{\alpha}}}u,i_{\b{\p}^V\frac{\delta}{\delta z^{\beta}}}u\rangle\sqrt{-1}dz^{\alpha}\wedge d\b{z}^{\beta}\\
&=\int_{p^{-1}(z)}c(\phi)|u|^2e^{-\phi}+\langle(1+\Box')^{-1}\iota(\frac{\p}{\p z^\alpha}\otimes u),\iota(\frac{\p}{\p z^\beta}\otimes u)\rangle\sqrt{-1}dz^{\alpha}\wedge d\b{z}^{\beta}.
\end{split}
\end{align}
where $\Theta^{E}$ denotes the curvature of the Chern connection on $E$ with the $L^2$ -metric defined above, here $\Box'=\n'\n'^*+\n'^*\n$ is the Laplacian on $L|_{p^{-1}(z)}$-valued forms on $p^{-1}(z)$ defined by the $(1,0)$-part of the Chern connection on $L|_{p^{-1}(z)}$, and 
\begin{align}\label{KS tensor}
\begin{split}
  \iota(\frac{\p}{\p z^\alpha}\otimes u)=i_{\b{\p}^V\frac{\delta}{\delta z^{\alpha}}}u.
 \end{split}
\end{align}
\end{thm}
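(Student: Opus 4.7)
The plan is to follow Berndtsson's second-variation technique for direct image bundles. I would first view a local holomorphic section $u \in H^0(X_z, K_{X_z}\otimes L_z)$ as an $L$-valued relative $(n,0)$-form, and lift it to the total space $\mc{X}$ using the horizontal frame $\delta/\delta z^{\alpha}$ from \eqref{horizontal}. Concretely, writing $u = f\, dv^1 \wedge \cdots \wedge dv^n \otimes e$ in local coordinates, one replaces $dv^i$ by $\delta v^i$ from \eqref{deltav} to obtain a form $\tilde u$ on $\mc{X}$ whose restriction to each fiber is $u$. The key observation is that, although $u$ is fiberwise holomorphic, $\b{\p}\tilde u$ need not vanish on $\mc{X}$, and its $d\b{z}^{\alpha}$-component equals precisely the Kodaira--Spencer contraction $i_{\b{\p}^V(\delta/\delta z^{\alpha})}u$, which by \eqref{KS tensor} equals $\iota(\p/\p z^{\alpha}\otimes u)$. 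This identifies $\iota$ as the obstruction to extending $u$ to a holomorphic form in the horizontal directions.

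\textbf{Complex Hessian of $\|u\|^2$.} After choosing a local holomorphic frame of $E$ that is orthonormal at $z$ and has vanishing $\b{\p}$-derivative there, the Chern curvature of the $L^2$-metric satisfies
\begin{equation*}
\langle \Theta^E(\tfrac{\p}{\p z^{\alpha}}, \tfrac{\p}{\p \b{z}^{\beta}}) u, u\rangle = -\p_{\alpha}\b{\p}_{\beta}\|u\|^2 + \|\n^{(1,0)}_{\alpha} u\|^2.
\end{equation*}
I would evaluate $\p_{\alpha}\b{\p}_{\beta}\|u\|^2$ by recognizing $\|u\|^2 = \int_{X_z}|\tilde u|^2 e^{-\phi}$ and pushing forward $\p\b{\p}(|\tilde u|^2 e^{-\phi})$ from $\mc{X}$ to $M$ via fiber integration. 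Using Lemma \ref{lemma1} to decompose $\sqrt{-1}\p\b{\p}\phi$ into the geodesic curvature $c(\phi)$ plus a vertical $(1,1)$-form, the push-forward cleanly separates into a purely horizontal piece, producing the first term $\int_{X_z} c(\phi)_{\alpha\b{\beta}}|u|^2 e^{-\phi}$ of \eqref{cur}, plus cross-terms involving $\b{\p}\tilde u$ that feed into the Kodaira--Spencer remainder.

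\textbf{The $(1+\Box')^{-1}$ term and the main obstacle.} To identify the second term, I would compute $\n^{(1,0)}_{\alpha}u$ as the fiberwise $L^2$-orthogonal projection of the horizontal lift of $\p_{\alpha}u$ onto the finite-dimensional space of holomorphic sections along $X_z$. The defect from holomorphy is the minimum-$L^2$ solution of a $\b{\p}$-equation whose right-hand side is exactly $\iota(\p/\p z^{\alpha}\otimes u) \in A^{n-1,1}(X_z, L_z)$. Applying the Bochner--Kodaira--Nakano identity on $L$-valued forms, together with the positivity of $L$ which contributes a $+1$ shift on the relevant bidegree, the squared norm of this minimum-$L^2$ solution is expressible through the resolvent $(1+\Box')^{-1}$, and combining with the residual cross-term from the previous step yields exactly $\langle(1+\Box')^{-1}\iota_{\alpha}u, \iota_{\beta}u\rangle$ in \eqref{cur}. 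The main obstacle I expect is the bookkeeping in this Bochner--Kodaira step: tracking the signs and normalizations so that the curvature of $L$ contributes precisely a $+1$ (and not another constant), and verifying that $\iota(\p/\p z^{\alpha}\otimes u)$ lies in the subspace of $A^{n-1,1}(X_z, L_z)$ where the identity reduces cleanly to the stated resolvent. The rest is careful manipulation of the horizontal--vertical decomposition and Stokes' theorem on $X_z$.
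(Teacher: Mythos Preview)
The paper does not give its own proof of Theorem~\ref{thm4}; it is quoted verbatim from Berndtsson \cite[Theorem~1.2]{Bo1} and used as a black box in the Appendix to derive the Weil--Petersson curvature formula. So there is nothing in the paper to compare your argument against.

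That said, your sketch is a faithful outline of Berndtsson's method in \cite{Bo1}: lift fiberwise holomorphic sections to the total space via the horizontal frame, compute the complex Hessian of the $L^2$-norm by fiber integration and the decomposition in Lemma~\ref{lemma1}, and identify the defect term through the $\b{\p}$-problem whose $L^2$-minimal solution produces the resolvent $(1+\Box')^{-1}$. Your anticipated obstacle is the right one: the ``$+1$'' arises because on $(n-1,1)$-forms with values in a positive line bundle the Bochner--Kodaira identity gives $\Box''=\Box'+[\sqrt{-1}\Theta^L,\Lambda]$, and on this bidegree the curvature term acts as the identity when the fiber metric is the one induced by $\sqrt{-1}\p\b{\p}\phi$; one also needs that $i_{\mu_\alpha}u$ is $\b{\p}$-closed (it is primitive and $u$ is holomorphic) so that the H\"ormander estimate applies cleanly. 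These are exactly the points Berndtsson checks in \cite[Section~4]{Bo1}, and your proposal correctly flags them.
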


Now we will derive the curvature formula of the Weil-Petersson metric by using Berndtsson's curvature formula (see \cite[Section 4.2]{Bo1}) or \cite{LSYY}.
\begin{lemma}[{Schumacher \cite[Proposition 1]{Sch}}]\label{lemma0}
	If $e^{\phi}=\det\phi$, then 
	$$(\Box+1)c(\phi)_{\alpha\b{\beta}}=(\mu_\alpha,\mu_\beta),$$
	where $\Box:=-\phi^{i\b{j}}\frac{\p^2}{\p v^{i}\p\b{v}^{j}}$, $\mu_\alpha=\b{\p}^V\frac{\delta}{\delta z^{\alpha}}$, $(\cdot,\cdot)$ denotes the point-wise inner product.  
\end{lemma}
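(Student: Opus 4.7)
The plan is to verify the identity pointwise on each fiber $X_z$ by exploiting the K\"a{}hler--Einstein condition $\phi=\log\det\phi_{i\b j}$, after reducing to fiber normal coordinates. Fix $(z_0,v_0)\in\mc{X}$ and choose local holomorphic fiber coordinates $v^1,\dots,v^n$ so that $\phi_{i\b j}(z_0,v_0)=\delta_{ij}$ and $\p_{v^k}\phi_{i\b j}(z_0,v_0)=0$; then $\phi^{i\b j}|_0=\delta_{ij}$, all first $v$-derivatives of $\phi^{i\b j}$ vanish at the base point, and $\Box|_0=-\sum_m\p_{v^m}\p_{\b v^m}$.

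The first step is to extract eigenvalue-type identities from the K\"a{}hler--Einstein relation. Differentiating $\phi=\log\det\phi_{i\b j}$ in $z^\alpha$ yields $\phi_\alpha=\phi^{j\b i}\phi_{j\b i\alpha}$, which is precisely the statement $(\Box+1)\phi_\alpha=0$ on each fiber. Applying $\p_\alpha$ to $(\Box+1)\phi_{\b\beta}=0$ and using $\p_\alpha\phi^{j\b i}=-\phi^{j\b k}\phi^{l\b i}\phi_{l\b k\alpha}$, one obtains the base-direction identity $(\Box+1)\phi_{\alpha\b\beta}=(\p_\alpha\phi^{j\b i})\phi_{j\b i\b\beta}$, which at the normal point is a pure third-derivative expression in $\phi$.

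The second step is to identify both sides pointwise. The Kodaira--Spencer tensor $\mu_\alpha=\b\p^V(\delta/\delta z^\alpha)=-\p_{\b v^k}(\phi_{\alpha\b j}\phi^{\b j i})\,d\b v^k\otimes\p_{v^i}$ has components $\mu_\alpha^i{}_{\b k}|_0=-\phi_{\alpha\b i\b k}$ at the normal point, so $(\mu_\alpha,\mu_\beta)|_0=\sum_{i,k}\phi_{\alpha\b i\b k}\o{\phi_{\beta\b i\b k}}$. Writing $c(\phi)_{\alpha\b\beta}=\phi_{\alpha\b\beta}-F_{\alpha\b\beta}$ with $F_{\alpha\b\beta}=\phi_{\alpha\b j}\phi^{i\b j}\phi_{i\b\beta}$, one expands $\p_{v^m}\p_{\b v^m}F_{\alpha\b\beta}|_0$: the vanishing of first $v$-derivatives of $\phi^{i\b j}$ suppresses most cross terms, and the contribution of the middle factor is governed by $\p_m\p_{\b m}\phi^{j\b i}|_0=-\phi_{j\b i m\b m}|_0$, which follows from differentiating $\phi^{j\b i}\phi_{j\b k}=\delta^i_k$ twice.

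Finally one combines these expansions. The resulting pointwise expression for $(\Box+1)c(\phi)_{\alpha\b\beta}|_0$ contains various fourth-order derivatives of $\phi$, which cancel thanks to the K\"a{}hler--Einstein identity and its further fiber derivatives --- for instance $\phi_{\alpha\b i}|_0=\sum_j\phi_{j\b j\alpha\b i}|_0$, obtained by differentiating $\phi_{\b i}=\phi^{j\b k}\phi_{j\b k\b i}$ in $z^\alpha$ and evaluating at the normal point. What remains is exactly the pure third-derivative combination $(\mu_\alpha,\mu_\beta)|_0$, and since the base point was arbitrary and the identity is tensorial, the lemma follows. The main obstacle is the bookkeeping of these many third- and fourth-order derivative terms and organising the expansion so that the differentiated K\"a{}hler--Einstein identities yield the precise cancellations; an alternative, more coordinate-free route proceeds via the contraction identity $\iota_{\delta/\delta z^\alpha}\omega=\sqrt{-1}\,c(\phi)_{\alpha\b\beta}\,d\b z^\beta$ together with a Bochner argument on the fiber, but still requires the K\"a{}hler--Einstein structure to enter in an essential way.
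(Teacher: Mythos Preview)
Your approach is correct, and the identities you extract from the K\"ahler--Einstein relation (such as $(\Box+1)\phi_\alpha=0$ and its $\p_{\b\beta}$-derivative) are exactly the ingredients that make the cancellations go through. However, the paper's proof is considerably shorter and more structural: instead of working in fiber normal coordinates and tracking individual third- and fourth-order terms, it first establishes the \emph{general} identity (valid for any $\phi$, not just K\"ahler--Einstein)
\[
-\Box\, c(\phi)_{\alpha\b\beta}\;=\;\bigl(\p\b\p\log\det\phi_{i\b j}\bigr)\!\left(\tfrac{\delta}{\delta z^\alpha},\tfrac{\delta}{\delta\b z^\beta}\right)\;-\;(\mu_\alpha,\mu_\beta),
\]
which says that the fiber Laplacian of the geodesic curvature equals the horizontal component of the fiberwise Ricci form minus the Kodaira--Spencer pairing. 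The K\"ahler--Einstein hypothesis $e^\phi=\det\phi_{i\b j}$ then enters only at the very end: it identifies $\p\b\p\log\det\phi_{i\b j}$ with $\p\b\p\phi$, whose horizontal part is by definition $c(\phi)_{\alpha\b\beta}$, and the lemma follows in one line. So what you describe as the ``main obstacle'' --- the bookkeeping of many derivative terms and organising the differentiated KE identities --- is bypassed entirely by recognising the Ricci form; conversely, your normal-coordinate route has the virtue of being completely explicit and self-contained, not relying on the horizontal--vertical decomposition of $\p\b\p\log\det\phi$.
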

\begin{proof}
	By direct computation, one has 
	\begin{align}
	\phi^{i\b{j}}\frac{\p^2}{\p v^{i}\p\b{v}^{j}}c(\phi)_{\alpha\b{\beta}}=(\p\b{\p}\log\det\phi)(\frac{\delta}{\delta z^\alpha},\frac{\delta}{\delta \b{z}^\beta})-(\mu_\alpha)^{i}_{\b{j}}\o{(\mu_\beta)^{k}_{\b{l}}}\phi^{\b{j}l}\phi_{i\b{k}},	
	\end{align}
where $(\mu_{\alpha})^i_{\b{j}}=-\p_{\b{j}}(\phi_{\alpha\b{k}}\phi^{\b{k}i})$. By condition $e^{\phi}=\det\phi$, one has
\begin{align}
-\Box c(\phi)_{\alpha\b{\beta}}=c(\phi)_{\alpha\b{\beta}}-(\mu_\alpha,\mu_\beta),	
\end{align}
which completes the proof. 
\end{proof}
Now we denote by $\mc{X}$ the Teichm\"uller curve over Teichm\"uller space $M=\mc{T}$,  $L=K_{\mc{X}/M}$, then $E=p_*(K_{\mc{X}/\mc{T}}^{\otimes 2})$, which is the dual bundle of $T\mc{T}$, and the dual metric of $L^2$-metric (\ref{L2 metric}) is exactly the Weil-Petersson metric. In fact, 
\begin{align*}
	\|u\|^2&=\int_{\mc{X}_z}|u|^2e^{-\phi}=\int_{\mc{X}_z}|f|^2 e^{-\phi}\sqrt{-1}dv\wedge d\b{v}\\
	&=\int_{\mc{X}_z}|f|^2\phi_{v\b{v}}^{-2}(\phi_{v\b{v}}\sqrt{-1}dv\wedge d\b{v})=\int_{\mc{X}_z}|u|^2_{\omega}\omega_z,
\end{align*}
where $\omega=\p\b{\p}\phi$, $\omega_z=\omega|_{p^{-1}(z)}$.
\begin{lemma}
For any $\alpha\in A^{0,1}(\mc{X}_z, K_{\mc{X}_z})$, then
$$\Box(\frac{\alpha}{\omega'})=\frac{1}{\omega'}\Box'\alpha.$$	
Here $\omega'=\phi_{v\b{v}}d\b{v}\otimes dv\in  A^{0,1}(\mc{X}_z, K_{\mc{X}_z})$.
\end{lemma}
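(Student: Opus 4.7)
The plan is a direct local coordinate computation on the fiber $\mc{X}_z$, which is a Riemann surface whose K\"ahler form satisfies $\phi_{v\b{v}}=e^{\phi}$ by the normalization $e^\phi=\det\phi$ used in Lemma \ref{lemma0}. Fix a local holomorphic coordinate $v$ and write $\alpha=g(v,\b{v})\,d\b{v}\otimes dv$, where $dv$ serves as a local frame of $L|_{\mc{X}_z}=K_{\mc{X}_z}$ with Hermitian metric $h(dv,dv)=e^{-\phi}$. Then $u:=\alpha/\omega'=g\,e^{-\phi}$ is a well-defined local function. Since $\alpha$ has no $(1,0)$-component, bidegree considerations force $\nabla'^*\alpha=0$, hence $\Box'\alpha=\nabla'^*\nabla'\alpha$; moreover $\nabla'\alpha\in A^{1,1}(\mc{X}_z,L)$ is top degree on the fiber, so $\b{\p}(\nabla'\alpha)=0$.

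Next I compute $\nabla'\alpha$. The Chern connection on $L$ with metric $e^{-\phi}$ has $(1,0)$-connection form $\theta=-\p\phi$, which gives
\begin{equation*}
\nabla'\alpha=(g_v-g\phi_v)\,dv\wedge d\b{v}\otimes dv=u_v\,e^\phi\,dv\wedge d\b{v}\otimes dv,
\end{equation*}
using $g=ue^\phi$. Applying the K\"ahler identity $\nabla'^*=\sqrt{-1}\,[\Lambda,\b{\p}]$ together with $\b{\p}(\nabla'\alpha)=0$, one has $\Box'\alpha=-\sqrt{-1}\,\b{\p}\,\Lambda(\nabla'\alpha)$. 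On a Riemann surface, $\Lambda(f\,dv\wedge d\b{v})=-\sqrt{-1}\,e^{-\phi}f$, so $\Lambda(\nabla'\alpha)=-\sqrt{-1}\,u_v\otimes dv$ and consequently
\begin{equation*}
\Box'\alpha=-\sqrt{-1}\,\b{\p}(-\sqrt{-1}\,u_v\otimes dv)=-u_{v\b{v}}\,d\b{v}\otimes dv.
\end{equation*}
Dividing by $\omega'=e^\phi\,d\b{v}\otimes dv$ yields $\Box'\alpha/\omega'=-e^{-\phi}u_{v\b{v}}$, which agrees with $\Box u=-\phi^{v\b{v}}u_{v\b{v}}=-e^{-\phi}u_{v\b{v}}$ by the definition of $\Box$ in the statement.

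The computation is mechanical; the only real obstacle is bookkeeping signs and $\sqrt{-1}$ factors in the K\"ahler identity for $\nabla'^*$ and in the Chern connection form on $K_{\mc{X}_z}$. A useful sanity check is $u\equiv 1$: then $\alpha=\omega'$, and the K\"ahler-Einstein relation $\phi_{v\b{v}}=e^\phi$ forces $\nabla'\omega'=0$ since $(e^\phi)_v-e^\phi\phi_v\equiv 0$, matching $\Box 1=0$. Conceptually, $\omega'$ is a nowhere-vanishing section of $A^{0,1}(\mc{X}_z,K_{\mc{X}_z})$ that is simultaneously $\nabla'$-parallel and $\b{\p}$-closed, so multiplication by $\omega'$ provides a canonical identification of scalar functions with $K$-valued $(0,1)$-forms that intertwines $\Box$ with $\Box'$.
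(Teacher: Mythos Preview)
Your proof is correct and follows essentially the same direct local computation as the paper: compute $\nabla'\alpha$, apply the K\"ahler identity $\nabla'^*=\sqrt{-1}[\Lambda,\bar\partial]$, and compare with $\Box(\alpha/\omega')$. The one substantive difference is that you invoke the K\"ahler--Einstein normalization $\phi_{v\bar v}=e^{\phi}$ at the outset, whereas the paper's argument works with a general $\phi_{v\bar v}$ (writing the connection form as $-\partial\log\phi_{v\bar v}$ and using the product-rule identity $\partial_v(f/\phi_{v\bar v})=\phi_{v\bar v}^{-1}(\partial_v f - f\,\partial_v\log\phi_{v\bar v})$); your extra hypothesis is harmless in context but not actually needed for the lemma itself.
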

\begin{proof}
We assume that $\alpha=fd\b{v}\otimes dv$, by noting that $-\sqrt{-1}\n'^*=[\Lambda,\b{\p}]$, then
	\begin{align*}
	\frac{1}{\omega'}\Box'\alpha &=	\frac{1}{\omega'}\n'^*\n'\alpha\\
	&=\frac{1}{\omega'}\n'^*\left((\p_vf-f\p_v\log\phi_{v\b{v}})dv\wedge d\b{v}\otimes dv\right)\\
	&=-\frac{1}{\omega'}\b{\p}(\frac{1}{\phi_{v\b{v}}}(\p_vf-f\p_v\log\phi_{v\b{v}}))dv\\
	&=-\frac{1}{\phi_{v\b{v}}}\p_{\b{v}}(\frac{1}{\phi_{v\b{v}}}(\p_vf-f\p_v\log\phi_{v\b{v}}))\\
	&=-\frac{1}{\phi_{v\b{v}}}\p_{\b{v}}\p_v(\frac{f}{\phi_{v\b{v}}})=\Box(\frac{\alpha}{\omega'}).
	\end{align*}
\end{proof}
Thus 
\begin{align}\label{app-sec}
\begin{split}
\langle(1+\Box')^{-1}i_{\mu_\alpha}u,i_{\mu_\beta}u\rangle &=\langle\omega'(1+\Box)^{-1}(\omega'^{-1}i_{\mu_\alpha}u),i_{\mu_\beta}u\rangle\\
&=\int_{\mc{X}_z}(1+\Box)^{-1}(\omega'^{-1}i_{\mu_\alpha}u)\cdot (\o{\omega'^{-1}i_{\mu_\beta}u})\omega_z.
\end{split}
\end{align}
From Theorem \ref{thm4}, one has
\begin{align*}
	\langle \Theta^{E}_{\alpha\b{\beta}}u,u\rangle &=\int_{\mc{X}_z}c(\phi)_{\alpha\b{\beta}}|u|^2e^{-\phi}+\langle(1+\Box')^{-1}i_{\b{\p}^V\frac{\delta}{\delta z^{\alpha}}}u,i_{\b{\p}^V\frac{\delta}{\delta z^{\beta}}}u\rangle\\
	&=\int_{\mc{X}_z}\left((1+\Box)^{-1}(\mu_\alpha\cdot \mu_{\b{\beta}})|u|^2_{\omega}+(1+\Box)^{-1}(\omega'^{-1}i_{\mu_\alpha}u)\cdot (\o{\omega'^{-1}i_{\mu_\beta}u})\right)\omega_z.
\end{align*}
 Note that $\{\mu_\alpha\}_{1\leq \alpha\leq 3g-3}\in \mb{H}^{0,1}(\mc{X}_z,K_{\mc{X}_z}^{-1})$ are harmonic, then
\begin{lemma}\label{lemma3}
	$\{u^\alpha:=h^{\alpha\b{\beta}}i_{\o{\mu_\beta}}\omega'\}_{1\leq \alpha\leq 3g-3}$ is a basis of $E=T^*\mc{T}$. Here $$h_{\alpha\b{\beta}}=\int_{\mc{X}_z}\mu_\alpha\cdot\mu_{\b{\beta}}\omega_z=\int_{\mc{X}_z}(i_{\mu_\alpha}\omega',i_{\mu_\beta}\omega')_{\omega_z}\omega_z,$$
	and $(h^{\alpha\b{\beta}})$ is the inverse matrix of $(h_{\alpha\b{\beta}})$.
\end{lemma}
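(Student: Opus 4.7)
The plan is to identify $i_{\b\mu_\beta}\omega'$ with the holomorphic quadratic differential $q_\beta$ associated to $\mu_\beta$ via the Hermitian duality $B(X_z)\cong Q(X_z)$, and then to deduce the basis statement by an invertible change of basis.

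First, I would work in local coordinates $v$ on $X_z$ with the hyperbolic metric $g = e^{\phi(v)}\,dv\,d\b v$ (so $\phi_{v\b v}=e^\phi$). A harmonic Beltrami differential $\mu_\beta$ has the form $\mu_\beta = \o{q_\beta(v)}\,e^{-\phi}\,\p_v\otimes d\b v$ for a holomorphic function $q_\beta(v)$, which gives the associated holomorphic quadratic differential $q_\beta = q_\beta(v)\,dv^2 \in H^0(X_z, K_{X_z}^{\otimes 2}) = E_z$. With $\omega' = \phi_{v\b v}\,d\b v\otimes dv$, contracting $\p_{\b v}$ (from $\b\mu_\beta$) with $d\b v$ (from $\omega'$) yields
\begin{align*}
i_{\b\mu_\beta}\omega' \;=\; \o{\mu_\beta^{v}{}_{\b v}}\,\phi_{v\b v}\,dv\otimes dv \;=\; q_\beta(v)\,dv^2 \;=\; q_\beta,
\end{align*}
identifying $i_{\b\mu_\beta}\omega'$ with $q_\beta$ as a section of $K_{X_z}^{\otimes 2}$.

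Next, by Riemann--Roch $\dim_{\mb C} H^0(X_z, K_{X_z}^{\otimes 2}) = 3g-3$, the same as $\dim_{\mb C}\mb H^{0,1}(X_z, TX_z)$, and the correspondence $\mu\mapsto q$ is an antilinear bijection. Hence $\{q_\beta\}_{1\leq \beta\leq 3g-3}$ is a basis of $E_z$. The Gram matrix $(h_{\alpha\b\beta})$ is by definition the Weil--Petersson Hermitian matrix on the basis $\{\mu_\alpha\}$, hence positive definite and invertible, so $(h^{\alpha\b\beta})$ is well-defined. Therefore $u^\alpha = \sum_\beta h^{\alpha\b\beta} q_\beta$ is obtained from the basis $\{q_\beta\}$ by an invertible linear transformation and forms a basis of $E_z$.

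Finally, for the identity $h_{\alpha\b\beta} = \int_{\mc X_z}(i_{\mu_\alpha}\omega', i_{\mu_\beta}\omega')_{\omega_z}\,\omega_z$, I would verify the pointwise equality $(i_{\mu_\alpha}\omega', i_{\mu_\beta}\omega')_{\omega_z} = \mu_\alpha\cdot\mu_{\b\beta}$ by a direct local computation, using that $|\omega'|_{\omega_z}^2 = 1$ so that the contraction absorbs the metric factors cleanly; integrating against $\omega_z$ then gives the stated formula, which is also the usual definition of the Weil--Petersson pairing. The main obstacle is pinning down the conventions for the interior product $i_\mu$ acting on a $K_{X_z}$-valued $(0,1)$-form and for the induced Hermitian metric on its target, since small factor errors propagate; once the identification $i_{\b\mu_\beta}\omega' = q_\beta$ is in hand, the basis statement follows by elementary linear algebra.
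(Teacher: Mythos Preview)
Your argument is correct and a bit more conceptual than the paper's. The paper does not invoke the metric duality $\mu\mapsto q$; instead it writes $i_{\o{\mu_\beta}}\omega'$ explicitly using the Kodaira--Spencer representative $(\mu_\beta)^v_{\b v}=-\p_{\b v}(\phi_{\beta\b v}\phi_{v\b v}^{-1})$, obtaining
\[
u^\alpha=-h^{\alpha\b\beta}\,\p_v(\phi_{\b\beta v}\phi_{v\b v}^{-1})\,\phi_{v\b v}\,dv^2,
\]
and then checks $\p_{\b v}$ of the coefficient vanishes by a direct computation using the hyperbolic condition $e^\phi=\phi_{v\b v}$. That computation is, unpacked, exactly a verification that $\mu_\beta$ is harmonic (equivalently that the associated $q_\beta$ is holomorphic). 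You instead take the harmonicity of $\mu_\alpha$---which the paper states just before the lemma---as input, and then the identification $i_{\o{\mu_\beta}}\omega'=q_\beta$ is a one-line contraction, after which linear independence follows by the invertibility of $(h^{\alpha\b\beta})$ as you note. Your route is cleaner and makes the geometric content transparent; the paper's route is self-contained in that it does not appeal to the harmonicity statement but reproves it inside the computation.
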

 \begin{proof}
 We need to prove $\b{\p}u^\alpha=0$ along each fiber. 	Note that 
 \begin{align*}
 u^\alpha=-h^{\alpha\b{\beta}}\p_v(\phi_{\b{\beta}v}\phi_{v\b{v}}^{-1})	\phi_{v\b{v}}dv^2.
 \end{align*}
By taking $\b{\p}$, one has 
\begin{align*}
\p_{\b{v}}\left(\p_v(\phi_{\b{\alpha}v}\phi_{v\b{v}}^{-1})\phi_{v\b{v}}\right)	&=\p_{\b{v}}\p_v(\phi_{\b{\alpha}v})-\p_{\b{v}}(\phi_{\b{\alpha}v}\phi_v)\\
&=\p_v\p_{\b{\alpha}}e^{\phi}-e^{\phi}\phi_{\b{\alpha}}\phi_v-e^{\phi}\phi_{\b{\alpha}v}=0,
\end{align*}
which completes the proof.
 \end{proof}
Note that
\begin{align*}
i_{\mu_\alpha} i_{\mu_{\b{\beta}}}\omega'=(\mu_\alpha\cdot\mu_{\b{\beta}})\omega'.	
\end{align*}
Thus
\begin{align*}
	R^{\gamma\b{\delta}}_{~~\alpha\b{\beta}}:&=\langle\Theta^E_{\alpha\b{\beta}}u^\gamma,{u}^\delta\rangle\\
	&=\int_{\mc{X}_z}\left((1+\Box)^{-1}(\mu_\alpha\cdot\mu_{\b{\beta}})(u^\gamma,u^\delta)_{\omega_t}+(1+\Box)^{-1}(\omega'^{-1}i_{\mu_\alpha}u^\gamma)\cdot (\o{\omega'^{-1}i_{\mu_\beta}u^\delta})\right)\omega_z\\
	&=h^{\gamma\b{\sigma}}h^{\b{\delta}\tau}\int_{\mc{X}_t}\left((1+\Box)^{-1}(\mu_\alpha\cdot\mu_{\b{\beta}})(\mu_\tau\cdot\mu_{\b{\sigma}})+(1+\Box)^{-1}(\mu_\alpha\cdot\mu_{\b{\sigma}})\cdot (\mu_\tau\cdot\mu_{\b{\beta}})\right)\omega_z.
\end{align*}
On the other hand, $\langle u^\alpha, u^\delta\rangle=h^{\alpha\b{\delta}}$, so the curvature of Weil-Petersson metric is 
\begin{align*}
	R_{\tau\b{\sigma}\alpha\b{\beta}}&=-R^{\gamma\b{\delta}}_{~~\alpha\b{\beta}}h_{\gamma\b{\sigma}}h_{\tau\b{\delta}}\\
	&=-\int_{\mc{X}_z}\left((1+\Box)^{-1}(\mu_\alpha\cdot\mu_{\b{\beta}})(\mu_\tau\cdot\mu_{\b{\sigma}})+(1+\Box)^{-1}(\mu_\alpha\cdot\mu_{\b{\sigma}})\cdot (\mu_\tau\cdot\mu_{\b{\beta}})\right)\omega_z.
\end{align*}

\end{document}